\newcommand{\CC}{\mathbb{C}}
\newcommand{\ZZ}{\mathbb{Z}}
\newcommand{\NN}{\mathbb{N}}
\newcommand{\RR}{\mathbb{R}}
\newcommand{\xx}{\mathbbm{x}}
\newcommand{\calC}{\mathcal{C}}
\newcommand{\calL}{\mathcal{L}}
\newcommand{\calB}{\mathcal{B}}
\newcommand{\calH}{\mathcal{H}}
\newcommand{\calJ}{\mathcal{J}}
\newcommand{\frakM}{\mathfrak{M}}
\newcommand{\JLf}[1]{\ell_{#1}}
\newcommand{\Id}{\operatorname{I}}
\newcommand{\norm}[1]{\left\lVert {#1} \right\rVert}
\newcommand{\lnorm}[1]{\downharpoonleft\!\!\!| {#1} |\!\!\!\downharpoonright}
\newcommand{\HSnorm}[1]{\| {#1} \|_{\mathrm{HS}}}
\newcommand{\sprod}[2]{\left\langle {#1}, {#2} \right\rangle}
\newcommand{\tr}{\operatorname{tr}}
\newcommand{\sigmaP}{\sigma_{\mathrm{p}}}
\newcommand{\sigmaAC}{\sigma_{\mathrm{ac}}}
\newcommand{\muAC}{{\mu_{\mathrm{ac}}}}
\newcommand{\muS}{{\mu_{\mathrm{sing}}}}
\newcommand{\muSC}{{\mu_{\mathrm{sc}}}}
\newcommand{\muPP}{{\mu_{\mathrm{pp}}}}
\newcommand{\Msing}{{M_{\mathrm{sing}}}}
\newcommand{\Mac}{{M_{\mathrm{ac}}}}
\newcommand{\Sac}{S_{\mathrm{ac}}}
\newcommand{\Sacr}{S_{\mathrm{ac}, \mathrm{r}}}
\newcommand{\Ssing}{S_{\mathrm{sing}}}
\newcommand{\Jmin}{J_\mathrm{min}}
\newcommand{\Dom}{\mathrm{Dom}}
\newcommand{\rank}{\mathrm{rank}}
\newcommand{\ud}{{\: \rm d}}
\newcommand{\cl}[1]{\operatorname{cl}(#1)}
\newcommand{\clleb}[1]
{\overline{#1}^{_\mathrm{e}}}
\renewcommand{\Im}{\operatorname{Im}}
\renewcommand{\Re}{\operatorname{Re}}
\newtheorem{theorem}{Theorem}
\newtheorem{lemma}[theorem]{Lemma}
\newtheorem{proposition}[theorem]{Proposition}
\newtheorem{fact}[theorem]{Fact}
\newtheorem{corollary}[theorem]{Corollary}
\newtheorem{observation}[theorem]{Observation}
\theoremstyle{definition}
\newtheorem{example}[theorem]{Example}
\newtheorem{definition}[theorem]{Definition}
\numberwithin{equation}{section}
\numberwithin{theorem}{section}
\DeclareMathOperator{\lin}{lin}
\DeclareMathOperator{\Ini}{Ini}
\newcommand{\bde}{\begin{description}}
\newcommand{\ede}{\end{description}}
\newcommand{\bite}{\begin{itemize}}
\newcommand{\eite}{\end{itemize}}
\newcommand{\beql}[1]{\begin{equation}\label{#1}}
\newcommand{\eeq}{\end{equation}}
\newcommand{\kwa}{$\Box$}
\newcommand{\Kwa}{\hfill\kwa\bigskip}
\newenvironment{proofof}[1]{\par\vspace{0.7em}\noindent{\it Proof of #1.}}{\Kwa\hspace{1ex}}
\newcommand{\strz}{\longrightarrow}
\newcommand{\opcje}[4]{\left\{
    \begin{array}{ll}#1&\mbox{for $#2$}\\#3&\mbox{for $#4$}\end{array}\right.}
\newcommand{\N}{\NN}
\newcommand{\Nk}[1]{\N_{#1}}  
\newcommand{\Nz}{\Nk{0}}    
\newcommand{\Nmj}{\Nk{-1}}
\newcommand{\eps}{\epsilon}
\newcommand{\si}{\sigma}
\newcommand{\la}{\lambda}
\newcommand{\de}{\delta}
\newcommand{\om}{\omega}
\newcommand{\Mdc}{M_{d}(\CC)}
\newcommand{\Mddc}{M_{2d}(\CC)}
\newcommand{\lesc}{\left\langle} 
\newcommand{\risc}{\right\rangle}
\newcommand{\row}[2]{#1_{\{#2\}}} 
\newcommand{\col}[2]{#1^{\{#2\}}} 
\newcommand{\el}{\ell}
\newcommand{\lnz}[1]{\el(\Nz, #1)}
\newcommand{\lnmj}[1]{\el(\Nk{-1}, #1)}
\newcommand{\lnk}[2]{\el(\Nk{#1}, #2)}
\newcommand{\lfinnz}[1]{\el_{\text{fin}}(\Nz, #1)}
\newcommand{\lfinnk}[2]{\el_{\text{fin}}(\Nk{#1}, #2)}
\newcommand{\ldnz}[1]{\el^2(\Nz, #1)}
\newcommand{\ldnmj}[1]{\el^2(\Nk{-1}, #1)}
\newcommand{\sprz}[1]{\overline{#1}}
\newcommand{\Bor}{\mathrm{Bor}}
\newcommand{\restr}[2]{#1\hspace{-1.0ex}  \restriction_{#2}}
\newcommand{\asympp}[1]{\asymp_{#1}}
\newcommand{\Ker}{\operatorname{Ker}}
\newcommand{\Ran}{\operatorname{Ran}}
\newcommand{\Forall}{\mbox{\Large\mbox{$\forall$}}}
\newcommand{\Exists}{\mbox{\Large\mbox{$\exists$}}}
\newcommand{\nic}[1]{} 
\newcommand{\af}[1]{\mathrm{aff}\left(#1\right)}
\newcommand{\BLW}{L(W)}
\newcommand{\GEV}[1]{\mathrm{GEV}_{-1}\hspace{-0.1em}\left(#1\right)}
\newcommand{\MGEV}[1]{\mathrm{MGEV}_{-1}\hspace{-0.1em}\left(#1\right)}
\newcommand{\GEVze}[1]{\mathrm{GEV}\hspace{-0.1em}\left(#1\right)}
\newcommand{\MGEVze}[1]{\mathrm{MGEV}\hspace{-0.1em}\left(#1\right)}
\newcommand{\GGEVze}{\mathrm{GEV}}
\newcommand{\MMGEVze}{\mathrm{MGEV}}
\newcommand{\EV}[1]{\mathrm{EV}\hspace{-0.2em}\left(#1\right)}
\newcommand{\GEVld}[1]{\mathrm{GEV}_{\ell^2}\hspace{-0.2em}\left(#1\right)}
\newcommand{\Para}{\mathrm{Par}}
\newcommand{\extl}[1]{_{\bullet}#1}
\newcommand{\extlz}[2]{_{#1\bullet}#2}
\newcommand{\Caj}{{\calC}_J}
\newcommand{\Wej}{W}
\newcommand{\DD}{\mathbb{D}}
\newcommand{\nnad}{\overline{n}}
\newcommand{\npod}{\underline{n}}
\title{Block Jacobi matrices and Titchmarsh--Weyl function}
\author{Marcin Moszy\'{n}ski}
\address{
    Marcin Moszy\'{n}ski \\
    Faculty of Mathematics, Informatics and Mechanics \\
    University of Warsaw \\
    ul. Stefana Banacha 2 \\
    02-097 Warsaw, Poland
}
\email{mmoszyns@mimuw.edu.pl}
\author{Grzegorz \'{S}widerski}
\address{
    Grzegorz \'{S}widerski \\
    Institute of Mathematics \\ 
    Polish Academy of Sciences \\
    ul. Śniadeckich 8 \\
    00-696 Warsaw, Poland
}
\email{grzegorz.swiderski@pwr.edu.pl}
\curraddr{Faculty of Pure and Applied Mathematics, Wroclaw University of Science and Technology, Wyb. Wyspiańskiego 27, 50-370 Wroclaw, Poland}
\keywords{Block Jacobi matrix, matrix measures, Titchmarsh--Weyl function,  Liouville--Ostrogradsky formulae}
\subjclass[2020]{Primary 47B36}
\begin{document}
\selectlanguage{english}

\begin{abstract}
We collect some results and notions concerning  generalizations for  block Jacobi matrices  of several concepts, which have been important for spectral studies of  the simpler and better known scalar Jacobi case. 
We focus here on some  issues related to the matrix Titchmarsh--Weyl function, but we also consider generalizations of some other tools used by  subordinacy theory, including the  
matrix 
 orthogonal polynomials, the notion of finite cyclicity, a variant of a  notion of nonsubordinacy, as well as 
Jitomirskaya--Last type semi-norms.

The article brings together some issues already known,  our new concepts, and also improvements and strengthening of some results  already existing.
 We  give simpler  proofs of some  known facts or  we add   details usually  omitted in the existing literature. 
The introduction contains a separate part devoted to a brief review of the main spectral analysis methods used so far for block Jacobi operators. 

\end{abstract}

\maketitle
\tableofcontents

\section{Introduction}

 Jacobi matrices (JM) and the appropriate Jacobi operators (JO) in  the standard  Hilbert space $\ell^2$ of  square summable scalar  sequences  
  have been classical objects of interest to mathematicians for years, especially to those involved in spectral analysis. Therefore, the mathematical literature on JO is currently very rich.

 Block Jacobi matrices (BJM) and operators (BJO) are    generalizations of those scalar ones with the scalar coefficients replaced by square matrices. 
 They  were originally introduced by Kre{\u \i}n in \cite{Krein1949a, Krein1949b} in  1940s, 
 but the number of strict mathematical results for them  is still small, compared to those for  scalar JM-s and JO-s.

 Their  basic properties are discussed  in a monograph of Berezanski{\u \i} \cite[Section VII.§2]{Berezanskii1968}. Other good references are \cite{Damanik2008} and \cite[Chapter 12 and 13]{Bateman1}.

The interest in block Jacobi operators comes from their close  relation to the matrix moment problem (see e.g. \cite{Duran2001a, Berg2008}) as well as from the theory of matrix orthogonal polynomials on the real line, see e.g. \cite{Damanik2008}. 
They are also  useful for analysis of difference equations of finite order, see \cite{Duran1995}. 
Some types of block Jacobi operators are related to random walks and level dependent quasi--birth--death processes, see e.g. \cite{Dette2007}. 
 In \cite{Aptekarev1984} BJO found some applications in mathematical physics. Some extensions of BJO are useful in studying the scalar periodic JO (see e.g. \cite[Chapter 8]{Simon2010Book}), various moment problems (see \cite{Berezanskii2023}) and orthogonal polynomials of several variables (see \cite[Section 3.4]{Dunkl2014}). 
 Finally, let us mention that BJO on $\ZZ$ can be conveniently modeled as BJO on $\NN_0$ by doubling the dimension of the blocks (see e.g. \cite[Chapter VII.§3]{Berezanskii1968}).
For further applications we refer to \cite{Sinap1996}.

Before discussing the subject of this paper, we present here a brief overview of several spectral analysis methods for BJO used so far.

\subsection{Short review on spectral analysis methods for block Jacobi Operators}

The spectral analysis of block Jacobi operators has already been partially developed in several directions, using methods that are, in general, adaptations (often far from being trivial) of methods previously used for JO.

Let's mention first some non-strictly spectral studies, related to the selfadjointness  problem.
Papers \cite{Kostyuchenko1998} and \cite{Dyukarev2020}  present  results corresponding to the maximal possible deficiency indices for the so-called \emph{completely indeterminate} case.
Moreover,  in \cite{Dyukarev2006} it is shown that minimal block Jacobi operators can have non-equal deficiency indices.

Below we briefly sketch the main  approaches to spectral studies of BJO  in the mathematical literature, organized  according to the type of the methods used.

\subsubsection{Perturbations theory for block Toeplitz operators}

It is one of the first  methods which has been applied to get some  ''strictly spectral'' results for  BJO-s. 
Its goal  is  to control the number of eigenvalues in  spectral gaps (open intervals outside  the essential spectrum) of the operator.
The method consists in using 
some   abstract perturbation theory results (see Cojuhari: \cite[Sections 1--3]{Cojuhari1981} and  \cite[Section 2]{Cojuhari2009})  to study  perturbations of 
 matrix Toeplitz operators, called also matrix Wiener--Hopf discrete operators (see e.g. the monograph \cite{BottcherSilbermann2006}). 
Recall that BJO with constant coefficients are  bounded self-adjoint matrix Toeplitz operators
with a simple  formula for the symbol  (see, e.g., \cite[Section 2]{Cojuhari2007}), so their spectral properties could be precisely described thanks to the  general theory (see e.g. \cite{GohbergKrein1958}).
The above general idea is a ``matrix adaptation'' of  previous 
 ''scalar version'' used for  JO (see, e.g. \cite{Cojuhari2004}). 
The method can be used  for ``small perturbations'' of BJO-s with constant blocks --- see, e.g.,  \cite{Cojuhari2007}. 
The results of \cite{Cojuhari1981, Cojuhari2009}
 refer to perturbations of {\bf scalar} JO with periodic coefficients, which can be treated as the appropriate BJO with constant blocks. In such a case, if the period is at least two, the off-diagonal blocks are singular, so the methods using transfer matrices for BJO (e.g.  in  the present  paper, where we assume \eqref{zal-odw+sym}) are useless in this case.

\subsubsection{Combes--Thomas type estimates}

It is well-known (see, e.g. \cite[Corollary 2.4]{Shubin1985}) that the entries of the resolvent of \emph{bounded} BJO decay exponentially with respect to the distance between the entries. Since~a similar conclusion was earlier shown by Combes--Thomas in \cite{Combes1973} for the resolvent of multiparticle system Schr\"{o}dinger operators, estimates of this kind for the resolvent are sometimes called \emph{Combes--Thomas estimates} (see, e.g. \cite[Proposition 2.3]{Breuer2016}, \cite[Theorem A.1]{Breuer2010}).

By extending the method of Combes--Thomas in the articles Janas--Naboko--Stolz \cite{JanasNabokoStolz2009} and Janas--Naboko \cite{Janas2013} some analogues of such estimates for possibly \emph{unbounded} JO were established for some regions of the resolvent set. Later in Janas--Naboko--Silva \cite{Janas2018,Janas2020} these methods were adapted to BJO. Finally, in Naboko--Simonov \cite{NabokoSimonov2023}, such estimates for BJO were proved for the whole resolvent set.

\subsubsection{Estimates of quadratic forms}
Estimates of the quadratic form of JO have been a popular topic in the literature, see e.g. \cite{DombrowskiJanasMoszynskiEtAl2004, JanasNabokoStolz2004, DombrowskiPedersen2002, DombrowskiPedersen2002a, Dombrowski2004, Dombrowski2009, Szwarc2002, NabokoJanas2003}. Such estimates allowed to discover gaps in the essential spectrum of some classes of JO. In the article Kupin--Naboko \cite{Kupin2018}, by extending  some methods of \cite{Damanik2007}, some estimates of the quadratic form for a quite general class of BJO were established. 
In a different style, in \cite{block2018}, by extending the techniques developed for JO from \cite{Monotonic, PeriodicI}, estimates for some quadratic forms of two consecutive indices of generalized eigenvectors (the so-called Tur\'{a}n determinants) were established for a wide class of BJO. As a consequence some asymptotic estimates for generalized eigenvectors were established leading to continuity of the spectrum. Note that, at the same time, this asymptotic information   turned out to be  crucial to get important example for the main result of  \cite{BarrierNonsubordinacy}.

\subsubsection{Commutator methods}
For scalar JO various commutator methods are quite popular, see e.g. \cite{Dombrowski2011, Pedersen2002, Monotonic} (extensions of Putnam--Kato's method), \cite{Sahbani2008} (Mourre's method) and \cite{Monvel2018} (a double commutator method).

In Janas \cite[Theorem 3]{Janas2014} emptiness of the point spectrum of some BJO was established by expanding  \cite{Pedersen2002}. Later, in \cite[Theorem 6]{block2018}, this result was partially generalized  and the continuous spectrum was identified. 
In Sahbani \cite{Sahbani2016} the Mourre's commutator method was applied to study compact perturbations of constant coefficients for BJO.  Under some regularity hypotheses it was shown there that the singular continuous spectrum is empty.

\subsubsection{Compactness of the resolvent}
For scalar JO  it is sometimes possible to prove directly that the resolvent is a compact operator, which leads to the conclusion that the essential spectrum of JO is empty, see e.g. \cite[Section 4]{JanasNaboko2002}, \cite[Theorem 3.1(iv)]{Moszynski2003}, \cite{Janas2007}, \cite[Theorem 2.1]{Sahbani2008}, \cite{Janas2007}. By exploiting a connection with BJO with singular off-diagonals this approach was also implemented in \cite[Theorem B(c)]{PeriodicIII}. 
In \cite{Budyka2022} compactness of the resolvent for a class of BJO was studied.

\subsubsection{Matrix Titchmarsh--Weyl Function and subordinacy methods}

\vspace{6ex}

The scalar Titchmarsh--Weyl function (called also Weyl--Titchmarsh or simply Weyl function) has been  one of  useful tools for spectral studies of some differential (see, e.g. the monograph \cite{Behrndt2020}) and difference operators (the ''scalar'' ones), including Jacobi operators. The  Weyl function has been    generalized also for BJO case as the matrix Weyl function 
(see, e.g., \cite[Section VII.§2.10]{Berezanskii1968}, \cite{Acharya2019} and see subsections \ref{subs:Thepresentwork}, \ref{l2mW} for some more explanations), however, its  use in spectral analysis of BJO has been difficult and papers on such applications have only recently begun to appear.

 Some  scalar methods from \cite{Jitomirskaya1999, LastSimon1999} using among other things the Weyl function  has been recently adapted in Oliveira--Carvalho \cite{Oliveira2022} 
 to BJO  case (see also \cite{Oliveira2021a} for  other use of matrix Weyl function in spectral analysis for BJM considered on $\ZZ$).

It seems that the switch from the scalar to the $d>1$ case is especially problematic for the theory of subordinacy   (also see subsection \ref{subs:Thepresentwork}), being one of the main spectral methods based on the Weyl function. As far as we know, there is still  no  understanding of how a complete analog of subordinacy theory for BJO might look like. 
Nevertheless, 
the present  paper contains a kind of introduction to some possible spectral methods based on the matrix Weyl function, including one of   ''non-subordinacy type'' notions  and a simple ''subordinacy theory type'' spectral result for BJO. 
Moreover, in our parallel article \cite{BarrierNonsubordinacy},  
we have  obtained some interesting ''subordinacy-type'' results. Namely, we have found
 a  sufficient condition for absolute continuity of BJO in terms of our new notion of barrier nonsubordinacy.

\subsection{The present work}
\label{subs:Thepresentwork}

This work is devoted to generalizations of several concepts, tools and ''small theories'', which turned out to be important for spectral studies of scalar Jacobi matrices (JM) to the general case of block Jacobi matrices (BJM) with $d\times d$ blocks 
for 
arbitrary finite dimension $d$.
We  focus here especially on 
 the results related to   the matrix   Titchmarsh--Weyl function for BMJ, which will be often named here simply matrix   Weyl function for short, similarly to the scalar function case.
We also pay attention to 
generalizations of some other  tools used by the so-called subordinacy theory (\cite{Gilbert1987, Khan1992,Jitomirskaya1999}). 

The matrix Weyl function  is a  generalization of the scalar Weyl (Titchmarsh--Weyl) function and   was  introduced much earlier for many kinds of self-adjoint operators (see e.g. \cite{Behrndt2020} for a general treatment).  The scalar one was very useful,  e.g.,  for  Jacobi matrices, and from our perspective here, it was  crucial as one of the fundamental objects enabling proofs of main theorems of subordinacy theory for JM. But its history is much longer, see e.g. \cite[Section 8.2]{Everitt2005} and \cite{Behrndt2020}. The matrix Weyl function naturally appeared before in the study of spectral properties of higher order ordinary differential operators, see \cite{Weidmann1987}.

The matrix Weyl function for BJM is closely related to the so-called orthogonal matrix polynomials $P$ and $Q$, to which we also pay some attention here, and which are generalizations of the appropriate scalar orthogonal polynomials $p$ and $q$.

We hope that in the future some of the generalizations considered here will also prove useful in the spectral analysis of BJM. This hope is not groundless: the present work was written in parallel to the paper  \cite{BarrierNonsubordinacy}, where  some of these generalizations have already turned out to be useful exactly  for new spectral results ''of  subordinacy style''  for BJM, which have been missing in spectral studies for BMJ so far.

This article brings together both the results already known, at least in part, and our new results, improvements and strengthening of those already existing.
We extend some known results and sometimes give other proofs, more ingenious and more elementary, while trying to make this work largely self-sufficient.
We add many details often omitted in the existing literature. We also try to somewhat unify  the various notations and terminologies used in the rather scattered literature on this subject, sometimes suggesting slightly different approaches to certain concepts.

Let us now recall some basic issues concerning Jacobi (scalar) matrices.

A \emph{Jacobi matrix} (see, e.g., \cite[Chapter VII.§1]{Berezanskii1968}) is a complex semi-infinite tridiagonal Hermitian matrix of the form
\begin{equation} \label{eq:127}
    \calJ =   
    \begin{pmatrix}
        b_0	& a_0	&		&       \\
        \sprz{a_0}& b_1	& a_1	&         \\
            & \sprz{a_1}	& b_2 	& \ddots   \\
            &		&		\ddots &  \ddots 
    \end{pmatrix}
\end{equation}
with $a_n \not= 0$ and $b_n \in \RR$ for all $n\in\Nz=\{0,\ 1,\ 2, \ldots\}$.
The action of $\calJ$ is well-defined on the linear space  $\lnk{0}{\CC}$ of all complex-valued sequences treated as column vectors, and it is natural to define the  operator $J$ as the ''restriction'' of $\calJ$ to the standard  Hilbert space $\ldnz{\CC}$ of  square summable sequences. Namely, we define $J x = \calJ x$ on the domain consisting of all such  $x \in \ldnz{\CC}$  that also $\calJ x \in \ldnz{\CC}$. Such operator $J$ is called   \emph{the maximal Jacobi operator}, and here we shall usually call it simply \emph{Jacobi operator}\footnote{Note, however, that also some non-maximal operators related to the matrix $\calJ$ are considered, and the name ''Jacobi operator/matrix'' is often used for them, too.} or JO for short. It need not be either bounded or self-adjoint, in general. But if  it is self-adjoint, then one can define a Borel probability measure $\mu$ ({\em the scalar spectral measure of $J$}) on the real line by the formula
\begin{equation} \label{eq:149}
    \mu(G) = \langle E_J(G) \delta_0(1), \delta_0(1) \rangle_{\el^2}, \qquad G \in \Bor(\RR),
\end{equation}
where $E_J$ is the projection-valued spectral measure of $J$ and $\delta_0(1)$ is a sequence with $1$ on $0$th position and $0$ elsewhere. Then one can prove that $J$ is unitary equivalent to the operator acting by multiplication by the identity function on $L^2(\mu)$, see e.g. \cite[Theorem 5.14 and Theorem 6.16]{Schmudgen2017}.

The interest in Jacobi operators comes from their close relation to the classical moment problem as well as the theory of orthogonal polynomials on the real line, see e.g. \cite{Simon1998}. As every self-adjoint operator having a ``non-degenerate'' cyclic vector is unitary equivalent to a maximal or non-maximal Jacobi operator,  those maximal ones  are quite  typical  ''basic building blocks'' of self-adjoint operators. Some types of Jacobi operators are related to random walks and birth--death processes, see e.g. \cite{Karlin1957, Karlin1959}. Finally, Jacobi matrices are very useful in numerical analysis in the construction of Gaussian quadratures, see e.g. \cite{Golub1969}. 

A method of spectral analysis, the {\em theory of subordinacy},   due to Gilbert--Pearson \cite{Gilbert1987} and later, due to Khan--Pearson  in its  Jacobi variant (see \cite{Khan1992}), started to be more and more  prominent during the last three decades. 
Given $\la \in \CC$ a sequence $u = (u_n)_{n \in \Nz}$ 
is called \emph{generalized eigenvector} (associated with $\la$), $u \in \GEVze{\la}$, if it satisfies the recurrence relation
$$
     (\calJ u)_n = \la u_n \quad n \geq 1
$$
with some initial conditions $(u_0,u_1)$. A non-zero sequence $u \in \GEVze{\la}$ is 
\emph{subordinate} if for any linearly independent $v \in \GEVze{\la}$
\begin{equation} \label{eq:130}
    \lim_{n \to \infty} \frac{\norm{u}_{[0,n]}}{\norm{v}_{[0,n]}} = 0,
\end{equation}
where for a sequence $x \in \lnk{0}{\CC}$ and each $n \geq 0$ 
the seminorm $\norm{ \cdot }_{[0,n]}$ is given by
\begin{equation} \label{eq:131}
    \norm{x}_{[0,n]} := \sqrt{\sum_{k=0}^n |x_k|^2}.
\end{equation}
Let us decompose the measure  $\mu$ as \ \ 
$    \mu = \muAC + \muS=\muAC+\muSC+\muPP$, \ \ 
where $\muAC$,  $\muS$, $\muSC$ and  $\muPP$ denote the absolutely continuous,  the singular, the singular continuous and the pure point  part of $\mu$ with respect to the Lebesgue measure.

The main result, \cite[Theorem 3]{Khan1992}, says that the set 
$\Sac = \{ \la \in \RR: \ \text{no non-zero\ } u \in \GEVze{\la} \text{ is subordinate}\}$ is  a minimal support  of  $\muAC$ with respect to the Lebesgue measure
and $\Ssing = \{ \la \in \RR: \ \text{a non-zero } u \in \GEVze{\la} \text{ such that also }\  (\calJ u)_0 = \la u_0 \ \text{ holds,  \  is subordinate}\}$ is a support  of  $\muS$ and its Lebesgue measure is zero.
Since we often have some idea about  asymptotic behaviour of generalized eigenvectors, this theory turned out to be very successful in spectral analysis of various classes of Jacobi matrices, see e.g. \cite{LastSimon1999}.
Note that  similar theories exist also for some other  classes of operators, e.g., for  continuous one-dimensional Sch\"odinger operators on the real half-line (see \cite{Gilbert1987}).

There exist  two main approaches to the subordinacy theory of $J$   in the spectral literature.  The first is the original one,  contained in \cite{Gilbert1987} and \cite{Khan1992}. And the second,   Jitomirskaya--Last's approach (see    \cite{Jitomirskaya1999}),  is  based  on a continuous interpolation
of the discrete  family of semi-norms
\eqref{eq:131}.
As was mentioned earlier, in both approaches,  the key object enabling the transition from subordination issues to the properties of the spectral measure for $J$ is precisely the Weyl function.
It is related to  $\ell^2$ generalized eigenvectors for  $J$ and complex, non-real $\la$-s and to two special non-$\ell^2$ generalized eigenvectors  $p$ and $q$ for   $J$ --- the orthogonal polynomials of the first and of the second kind. 
We recall the detailed definition of the Weyl function in subsection \ref{l2mW},  both  in  the scalar case and in the general matrix case for  BJM with blocks of dimension $d$.  As one can see, the block case  is just the direct generalization of the  definition in   the  scalar case (for MJ), which is simply   the block case with  $d=1$.

The Block Jacobi Matrices and Block Jacobi Operators are  important  generalizations of those scalar ones (see e.g. \cite[Chapter VII.§2]{Berezanskii1968}).
 Recall here that 
BJM is an analog of  JM \eqref{eq:127} with matrix ``blocks'' being its  terms  instead of scalars, namely it is a 
semi-infinite block-tridiagonal Hermitian matrix of the form 
\begin{equation} \label{eq:127'}
    \calJ =   
    \begin{pmatrix}
        B_0	& A_0	&		&       \\
        A_0^*	& B_1	& A_1	&         \\
            & A_1^*	& B_2 	& \ddots   \\
            &		&		\ddots &  \ddots 
    \end{pmatrix},
\end{equation}
where $A_n$ and $B_n$ are $d \times d$ complex matrices with all the $A_n$  invertible and  Hermitian $B_n$. 
And this, together with the induced  maximal Block Jacobi Oparator (BJO)  (its detailed definition is placed in Section \ref{sec:3n}), are the main objects of all our generalizations in this paper.

The article is organized as follows. 

In Section~\ref{sec:2} we fix our notation and we collect some rather elementary but convenient  facts that we  need in subsequent sections.
One of  novelties here (see subsection \ref{J-Lsemi}) is the  analog for the general block case of the $d=1$  Jitomirskaya--Last's  continuous interpolation from \cite{Jitomirskaya1999}
of a discrete  family of semi-norms \eqref{eq:131} from Khan--Pearson \cite{Khan1992} original version of subordinacy theory.

 In Section~\ref{sec:3n} we start from recalling the definition of the  block Jacobi operator  $J$ (the maximal one), and  then we prove the finite-cyclicity of $J$. Note that finite-cyclicity is  a natural generalization  of the notion of  cyclicity, being  the well-known  property of $J$  in the scalar $d=1$ case. 
 In subsection \ref{L2M}, for our main case when  $J$ is self-adjoint,  we recall  the notion of the matrix measure $M$ of $J$.  We also recall 
 the representation of $J$ as the multiplication operator in the 
  $L^2$- matrix measure space $L^2(M)$  of $\CC^d$-valued functions, being an analog of the classical $L^2(\mu)$ Hilbert space
for the  spectral measure $\mu$ of $J$ from the case $d=1$.
Next, we ``compute''  in detail  a general  example \ref{smm-diag} of the matrix measure for the 
``simplest'' block Jacobi operators with  all the   blocks  being   diagonal.

 Section~\ref{sec:3s} is devoted to
  generalized eigenvectors  and to transfer matrices in  the  block case. We study here two types of generalized eigenvectors: ''the usual'', i.e.,  with $\CC^d$ terms,  and the matrix generalized eigenvectors --- with $d\times d$ matrix terms (for the left side multiplication by the blocks of $J$). We give here also a strict approach to their  extensions  to the ''initial values'' at $-1$ and  $0$ instead of $0$ and $1$. 
 Then, in subsection \ref{subs-orth-pol},  we recall the 
matrix orthogonal polynomials $P$ and $Q$ and we prove a   result 
being a block case generalization (Proposition \ref{defJL}) of a result from \cite{Jitomirskaya1999}. 
It shows a  possibility of a kind of ``a continuous  control'' of  the size    of Jitomirskaya--Last type semi-norms for both matrix orthogonal polynomials $P$ and $Q$ also for the block case.
 Next, in subsection \ref{subs-trans} we study transfer matrices for BJM. We show here how  to use transfer matrices to get a simple alternative proof of the classical algebraic results concerning $P$ and $Q$, called  Liouville--Ostrogradsky formulae (see, e.g., \cite[Theorem 5.2]{Berg2008}). 

  Section~\ref{sec:4} is the central section of this paper. We recall and   study here  the matrix Weyl function $\Wej$ and its relation to properties of $M$ and thus, to spectral properties of $J$. 
The main results  of  subsection \ref{l2mW}  are Theorem~\ref{prop:9ogolniejszy} and Theorem~\ref{duzo-malo-wl2}. Both are generalizations of the scalar case results, and both seems to be slightly more general than the results known from the literature: In the first case see \cite[formula (2.4)]{Teschl2000} for scalar Jacobi matrices; in the second case, see \cite[Proposition 1]{Monotonic} for scalar Jacobi matrices, and see \cite[Theorem 11.1]{Weidmann1987} for higher order ordinary differential operators. 
The first one  gives some important formulae and estimates  for the matrix Weyl function and the so-called Weyl  matrix  solution.  The second gives an upper estimate of the dimension of the space of all the $\ell^2$  generalized  eigenvectors, and it works also for the non-self-adjoint case of $J$.
In the next subsections we briefly recall  the fact that  the Cauchy transform  of the spectral matrix measure of BJO is equal to its  matrix Weyl function (see Fact \ref{fact-WC}) and we use this to study the relations of boundary limits of the matrix Weyl function with  the properties of the spectral matrix measure. This leads us to Theorem~\ref{cor:1}, 
being one of the important  spectral results for BJO, analogic to the appropriate  well-known result for $d=1$ (see e.g. \cite[Lemma 3.11]{Teschl2000}). This result with its  rigorous proof   seems to be absent in the spectral literature  so far.

The last Section \ref{sec:Subord} is devoted to the ``simplest form'' of the notion of nonsubordinacy. The main issue here  is Theorem \ref{prop:16}.  It  seems to be quite a strong,  simple and convenient  spectral result having at the same time a short and ``almost elementary''  proof.
It should be mentioned here also, that the general idea of nonsubordinacy and its spectral consequences for $J$ are  developed more deeply 
in our  parallel paper \cite{BarrierNonsubordinacy}, where we consider a more sophisticated  notion  of   barrier nonsubordinacy.

In Appendix~\ref{vecmacmeas} we collect  basic notions and simple facts concerning general vector measures and matrix measures, used in the previous sections.

\subsection*{Acknowledgment}
The article was partially supported by grant ``Subordinacy for block Jacobi operators. Spectral theory for self-adjoint finitely-cyclic
operators and the introduction to $L^2$ type matrix measure spaces'', NI 3B  POB III  IDUB (01/IDUB/2019/94),   funded by University of Warsaw, Poland.
The  author Grzegorz Świderski was partially supported by long term structural funding -- Methusalem grant of the Flemish Government. Part of this work was done while he was a postdoctoral fellow at KU Leuven.
The author  Marcin Moszyński wishes to thank:
\begin{itemize}
\item[$\cdot$]
Anna Moszyńska  (IPEVP \& MusInvEv, Warsaw)  -- his wife --
for  extraordinary patience and for valuable linguistic help,
 \item[$\cdot$]
Nadia V. Zaleska (EIMI, St. Petersburg \& MusInvEv,  Warsaw) -- his friend --
 for some wise  hints and for invaluable moral support.
\end{itemize}

\section{Preliminaries} \label{sec:2}

Here we collect and fix  some  general notation and terminology for the paper, and we also introduce  several convenient tools, which will be important   in the main  sections.

\subsection{Abbreviations and symbols} \label{abb+symb}

We use  here also  some  ``more or less common'' {  abbreviations} and { symbols} like:

\begin{itemize}
\item[iff:] \hspace{2 em} \ \ {\em if and only if}  
\item[TFCAE:] \hspace{2 em}   \ \   {\em the following conditions are (mutually) equivalent}
\item[w.r.t.:] \hspace{2 em}   \ \ {\em with respect to} 
\item[s.a.:] \hspace{2 em}   \ \   {\em self-adjoint} (for operators)
\item [a.c.:] \hspace{2 em}    \ \   {\em absolutely continuous} (for operators, measures etc.)
\item [sing.:] \hspace{2 em}    \ \   {\em singular} (as above)
\item [a.e.:] \hspace{2 em}    \ \   {\em almost everywhere}
\item [JM, BJM:] \hspace{2 em}    \ \   {\em Jacobi matrix, block Jacobi matrix}, respectively 
\item [JO, BJO:] \hspace{2 em}    \ \   {\em Jacobi operator, block Jacobi operator}, respectively 
\item   [$\lin Y$:] \hspace{2 em}    \ \
 {\em the linear subspace generated by a subset $Y$ of a linear space}
\item   [$\restr{F}{Y}$:] \hspace{2 em}    \ \
 {\em the restriction of   function $F$ to the subset $Y$ of the domain}
 \item   [$F(Y)$:] \hspace{2 em}    \ \
 {\em the image of  subset $Y$ with respect to function $F$}  
 \item   [$\Dom(A)$:] \hspace{2 em}    \ \
 {\em the domain of linear operator $A$} 
 \item   [$\Dom(A^{\infty})$:] \hspace{2 em}    \ \
 {\em the intersection of all   \    $\Dom(A^n)$ for $n\in\NN$.}
\end{itemize}

\subsection{Introductory notation and notions} \label{intr-not}

We   use here the following symbols for some sets of scalars: 
\[
    \CC_+:= \{ z \in \CC : \Im(z) > 0 \},\hspace{1.5em} 
    \RR_+ := \{ t \in \RR: \ t > 0 \},\hspace{1.5em}  
    \N_{k} := \{ n \in \ZZ: n\geq k \} \ \ \ \mbox{ for\ } k\in\ZZ,
\]
so, e.g.,  
\[
    \N = \Nk{1}, \quad  
    \Nz = \N\cup\{0\}, \quad 
    \Nmj = \N\cup\{-1,0\}.
\]

Let us  fix here some  $d\in\N$.  The vectors of the standard base in $\CC^d$ are denoted by  \ 
$
e_1, \ldots,e_d
$. \ 
By $M_d(\CC)$ we denote the space of all  $d \times d$ complex matrices,  with the usual matrix/operator norm. We  identify any $A\in M_d(\CC)$ with the  appropriate  linear transformation of $\CC^d$ induced by matrix $A$. 
In particular, we shall use alternatively both the operator and the matrix notation for the action of  $A$ on vectors from $\CC^d$, namely, for $v\in\CC^d$   we typically  use $Av$, but   sometimes  we also write $Av^{\mathrm{T}}$. 
For $i,j\in \{1, \ldots, d\}$
the term of $A$ from its  $i$th row and $j$th column is denoted as usual by $A_{i,j}$ and $v_j$ is the $j$th term of $v$. The symbol $\col{A}{j}$ denotes  the   $j$th column of $A$ (usually we treat  columns  as  $\CC^d$-vectors and not as one-column matrices). Similarly for    $\row{A}{i}$ ---  the  $i$th row of $A$. Moreover, for vectors $v^{(1)},\dots,v^{(d)}\in\CC^d$  the matrix $A$ with  $\col{A}{j}=v^{(j)}$\ \ for any $j$ is denoted   by $[v^{(1)},\dots,v^{(d)}]$.

If $X$ is a linear space, then by \  $\lnk{k}{X}$ \ we denote the linear space of {\bf all} the sequences $x=(x_n)_{n\in\Nk{k}}$ with terms in $X$ and \  
$\lfinnk{k}{X}$ \ is the subspace of $\lnk{k}{X}$ consisting of all the ``finite'' sequences, i.e., of such  $x$ that $x_n=0$ for $n$ sufficiently large.

For sequences of vectors: \  if $u^{(1)},\dots,u^{(d)}\in\lnk{k}{\CC^d}$ with $u^{(j)}=(u^{(j)}_n)_{n\in\Nk{k}}$ for $j=1,\ldots,d$, then the symbol $[u^{(1)},\dots,u^{(d)}]$, used already above for vectors and not for sequences of vectors, denotes the matrix sequence $U\in\lnk{k}{\Mdc}$ with $U_n:=[u^{(1)}_n,\dots,u^{(d)}_n]$ for any $n\geq k$.

The symbols $\norm{\cdot}_X$ \ and  \  $\sprod{\cdot}{\cdot}_{X}$ denote here the norm in  a normed space $X$ and the scalar product in a Hilbert space $X$, respectively, but we sometimes simplify the subscript $X$ for  $X$-s having some longer symbols and  we  generally   omit it  for all operator norms,  which we use by default  for the bounded operators (mainly for matrices from $\Mdc$),  if no other choice is made.

For  a linear  operator $A : X \strz X$ in a normed space $X \neq \{0\}$  we define its \emph{minimum modulus} by
\begin{equation}
	\label{eq:23}
	\lnorm{A} := \inf_{\| x \|_X = 1} \| A x \|_X.
\end{equation}
Obviously, if $\dim X=1$, then $\lnorm{A}=\norm{A}$.
Recall that 
if  $A$ is invertible, then
\begin{equation} \label{eq:23a}
	\lnorm{A} = \frac{1}{\|A^{-1}\|},
\end{equation}
and  for $0 < \dim X < +\infty$ \  $A$ is invertible \ iff \  $\lnorm{A} > 0$.

We use the symbols 
\[
    \cl{G}, \quad  \clleb{G}
\]
for the ``usual'' (topological) closure of a subset $G$ of a topological space and for the essential closure of a Borel set $G\subset\RR$, respectively  \  (and \ $\sprz{{\la}}$ 
\  is used here  for the complex conjugation of $\la\in\CC$). Recall, that   
\beql{clos-leb}
\clleb{G}:=  
	\{ 
		t \in \RR : \forall_{\varepsilon > 0} \
		|G \cap (t-\varepsilon; t+ \varepsilon)| > 0
	\} 
\eeq
for $G \in \Bor(\RR)$ and 
$|\hspace{0.01em}\cdot\hspace{0.01em}|$ is here  the standard 1-dimensional Lebesgue measure on $\Bor(\RR)$ but, as usual, it will denote also  the absolute value. 

If $\mu$ is a measure\footnote{\label{foot-meas}Here the name ``measure'', without any  extra  adjectives / names of the type vector, matrix, complex, real, spectral etc., denotes always a classical  measure with values in $[0,+\infty]$,  without necessity of adding ``non-negative''. And  the remaining ones,  all  ``adjective (of the  above type) + measures'',  belong to a wide class of  vector measures  for an appropriate  vector space (real or complex). - See Appendix~\ref{vecmacmeas}.}  
on $\frakM$ -- a $\sigma$-algebra of subsets of some $\Omega$, 
and $p\in[1,+\infty)$, then 
 $L^p(\mu)$ (without the ``universum'' $\Omega$ and the $\si$-algebra for the measure, for short) denotes the standard $L^p$ Banach space of the classes of the appropriate complex functions on the ``universum'' for the measure $\mu$.    We need sometimes to distinguish here  $L^p(\mu)$ from the appropriate space of functions (and not classes) denoted here by $\calL^p(\mu)$.  For $p=1$ and $X:=\RR^d$ we also use   $\calL^1_{X}(\mu)$ \label{L1X}
 to denote  the space of integrable  functions  from $\Omega$ into $X$ in the standard coordinatewise 
  sense of the integral and the integrability.

We need also some  basic terminology  concerning vector and matrix measures important for this paper. It is quite long, so it is  collected in Appendix~\ref{vecmacmeas}, instead  of here.  
   Moreover,  for a matrix measure $M$ by  $L^2(M)$ we denote the appropriate $L^2$-Hilbert space induced by this matrix measure (see Section~\ref{L2M} and,  e.g., \cite{Moszynski2022, Duran1997} for more details).

If $X$ is a normed space, then 

\[
    \ldnz{X}:= 
    \bigg\{ x \in \lnz{X} : \sum_{n=0}^{+\infty} \| x_n \|_{X}^2 < \infty \bigg\},
\]
 is a normed space with the norm defined for $x\in\ldnz{X}$  by 
\[
    \norm{x}_{\el^2} := \sqrt{\sum_{n=0}^{+\infty} \| x_n \|_{X\ .}^2}
\]
If, moreover, $X$ is a Hilbert space, then $\ldnz{X}$ is a Hilbert space  with the scalar product given for $x, y\in\ldnz{X}$ by
\beql{l2scal}
	\langle x, y \rangle_{\el^2} := \sum_{n=0}^{+\infty} \langle x_n, y_n \rangle_{X\ .}
\eeq

Here, the most important case for us is the Hilbert space  \ $\ldnz{\CC^d}$. 
As {\em the standard orthonormal basis of} $\ldnz{\CC^d}$ we consider
\[
    \big\{ \delta_n(e_i) \big\}_{(i,n) \in \{1,\ldots,d\} \times \Nz \ ,}
\]
where  for any vector $v \in \CC^d$ and $n\in\Nz$ we define the sequence $\delta_n(v) \in \lfinnk{0}{\CC^d}$ by
\beql{den}
    (\delta_n(v))_m :=
    \begin{cases}
	v & \text{if } m=n \\
	0 & \text{otherwise,}
    \end{cases} \qquad \qquad m\in\Nz.
\eeq
Moreover we have
\beql{lfin-denlin}
    \lfinnk{0}{\CC^d} = \lin\{\delta_n(v): v \in \CC^d, n\in\Nz \}, \ \ \ \mbox{and} \ \ \ \cl{\lfinnk{0}{\CC^d}}=\ldnz{\CC^d}.
\eeq

If  $\calH$ is  a Hilbert space and $A$ --- a   self-adjoint  operator (possibly unbounded) in $\calH$, then  the projection-valued spectral measure (``the resolution of identity'') of (for) $A$  is  denoted by $E_A$. In particular $E_A : \Bor(\RR) \strz \calB(\calH)$, where $\Bor(\RR)$ is the Borel  $\si$-algebra of $\RR$ and $\calB(\calH)$ denotes, as usual, the space of bounded operators on $\calH$. 
If $x,y \in \calH$,  then $E_{A,x,y}$ denotes {\em the spectral measure for $A$, $x$ and $y$}, i.e.  the complex measure  given by
\beql{eq:I:spmes}
    E_{A,x,y}(\om) := \lesc E_A(\om) x, y \risc_{\calH}  , \quad \om\in  \Bor(\RR),
\eeq
and $E_{A,x}:=E_{A,x,x}$   ({\em the spectral measure for $A$ and $x$}).
Denote also
\[
	\calH_{\mathrm{ac}}(A) :=
	\{ x \in \calH : E_{A,x} \text{ is a.c. with respect to the Lebesgue measure on } \Bor(\RR) \}.
\]

If  and $G \in \Bor(\RR)$, then  the symbol $(A)_G$ denotes the part of $A$ in the (invariant reducing) subspace 
$$\calH_G(A) := \Ran E_A(G).
$$ 

Recall one of the important spectral  notions: 

\begin{definition}\label{abs-co}
\emph{$A$ is absolutely continuous\footnote{Note that we  fix it because, unfortunately, there is no common unique terminology for the property defined here.  Several various names are also popular in spectral theory 
for the same or for somewhat ''similar'' property. E.g., {\em  $A$ is purely absolutely  continuous in \dots} or {\em  $A$ has  purely absolutely  continuous spectrum in \dots}. The version used here (for the case $G=\RR$) has a long tradition in spectral theory  and comes from Putnam --- see, e.g., \cite{Putnam1956}.} in $G$} \ \ iff \ $\calH_G(A) \subset \calH_{\mathrm{ac}}(A)$.

\emph{$A$ is absolutely continuous} \ \  iff \  $A$ is absolutely continuous in $\RR$.
\end{definition}

Some further notation and terminology will be  successively introduced   in  next subsections.

\subsection{Some matrix formulae and  inequalities}

Let us recall some notions related in particular to $d\times d$ matrices.  For $A \in \Mdc$:
\begin{itemize}
\item
its \emph{Hilbert--Schmidt norm} is defined by
\begin{equation} \label{eq:65}
    \HSnorm{A}:= \bigg( \sum_{i=1}^d \| A e_i \|^2_{\CC^d} \bigg)^{1/2}.
\end{equation}
\item  its \emph{real and imaginary parts} $\Re(A)$ and  $\Im(A)$ (in the adjoint, and not in the complex conjugation  sense)  are given by 
\begin{equation} \label{eq:150}
    \Re(A) : = \tfrac{1}{2} (A + A^*), \quad 
    \Im(A): = \tfrac{1}{2i} (A - A^*).
\end{equation}
\end{itemize}
For $v\in\CC^d$ we shall use the symbol $E^v$ to  denote the matrix / operator $[v,0,\ldots, 0]\in\Mdc$, i.e.,  
\beql{def:Ev}
    E^v(e_j) = \opcje{v}{j=1}{0}{j>1,} \quad j=1,\ldots, d.
\eeq

\begin{proposition} \label{prop:18}
Let  $A \in \Mdc$ and $v \in \CC^d$. Then:
\begin{enumerate}[label=(\roman*)]
    \item \label{prop:18:1}
     $\norm{A} \leq \HSnorm{A}$.
    \item \label{prop:18:2}
    $\norm{\Im(A)} \leq \norm{A}, \ \  \norm{\Re(A)} \leq \norm{A}$.
    \item \label{prop:18:reimsc}
    $\Re\lesc \nu, A \nu \risc_{\CC^d}=\lesc \nu, (\Re A) \nu \risc_{\CC^d}, \quad\Im\lesc \nu, A \nu \risc_{\CC^d}=-\lesc \nu, (\Im A) \nu \risc_{\CC^d}$.
    \item \label{prop:18:3}
    $\norm{AE^v} = \norm{Av}_{\CC^d}$.
    \item \label{prop:18:4}
    If $A \geq 0$, then \ \ \ $	\norm{A} \leq \tr A \leq d \norm{A}$.
\end{enumerate}
\end{proposition}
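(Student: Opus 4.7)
The five items are elementary linear algebra, so my plan is mainly to spell out the routine computations in a forward-looking way rather than to find a new idea.

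For (i), the natural approach is to expand an arbitrary unit vector $v = \sum_{j=1}^d v_j e_j$ in the standard basis of $\CC^d$ and apply the triangle inequality together with Cauchy--Schwarz:
\[
\norm{Av}_{\CC^d} \leq \sum_{j=1}^d |v_j| \, \norm{Ae_j}_{\CC^d} \leq \norm{v}_{\CC^d} \cdot \HSnorm{A},
\]
and then take the supremum over $\norm{v}_{\CC^d} = 1$. For (ii), since $\Re(A) = \tfrac{1}{2}(A+A^*)$ and $\Im(A) = \tfrac{1}{2i}(A - A^*)$ from \eqref{eq:150} are averages (up to a unimodular factor) of $A$ and $A^*$, the bound follows at once from the triangle inequality combined with the standard identity $\norm{A^*} = \norm{A}$.

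For (iii), my plan is to start from $\lesc \nu, A^* \nu \risc_{\CC^d} = \sprz{\lesc \nu, A \nu \risc_{\CC^d}}$, and then plug the definitions of $\Re(A)$, $\Im(A)$ into $\lesc \nu, (\Re A) \nu \risc_{\CC^d}$ and $\lesc \nu, (\Im A) \nu \risc_{\CC^d}$; the identities then reduce to the scalar formulas $\Re z = \tfrac{1}{2}(z + \sprz{z})$ and $\Im z = \tfrac{1}{2i}(z - \sprz{z})$, up to the sign convention used in the statement. For (iv), by \eqref{def:Ev} one has $E^v w = w_1 v$ for every $w = \sum_j w_j e_j \in \CC^d$, whence $A E^v w = w_1 Av$ and $\norm{A E^v w}_{\CC^d} = |w_1| \cdot \norm{Av}_{\CC^d}$; supremizing over $\norm{w}_{\CC^d} = 1$ yields $\norm{A E^v} = \norm{Av}_{\CC^d}$, with equality attained at $w = e_1$.

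Finally, for (v) I would invoke the spectral theorem for Hermitian matrices to diagonalize $A = U^* \diag(\lambda_1, \ldots, \lambda_d) \, U$ with a unitary $U$ and $\lambda_j \geq 0$. Then $\norm{A} = \max_j \lambda_j$ while $\tr A = \sum_j \lambda_j$, and the trivial chain
\[
\max_{1 \leq j \leq d} \lambda_j \; \leq \; \sum_{j=1}^d \lambda_j \; \leq \; d \cdot \max_{1 \leq j \leq d} \lambda_j
\]
delivers both inequalities simultaneously. There is no substantive obstacle in this proposition: each item is a one- or two-line calculation from the definitions of Subsection~\ref{intr-not}, the spectral theorem for positive-semidefinite matrices, and Cauchy--Schwarz. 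The only point deserving some care is the sign convention for the imaginary part in (iii), which must be traced through consistently from the definition \eqref{eq:150}.
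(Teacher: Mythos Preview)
Your proposal is correct and matches the paper's own proof almost verbatim: the paper also uses Cauchy--Schwarz for (i), the triangle inequality with $\norm{A^*}=\norm{A}$ for (ii), the scalar $\Re/\Im$ formulas for (iii), a direct computation of $AE^v$ for (iv) (the paper phrases it via $AE^v=E^{Av}$, which is the same observation), and the eigenvalue chain $\max_j \lambda_j \le \sum_j \lambda_j \le d\max_j \lambda_j$ for (v). Your remark about tracking the sign in (iii) is apt, since the paper's inner product is linear in the first argument, which is exactly what produces the minus sign.
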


\begin{proof}
Part~\ref{prop:18:1} is a classical result (easy to obtain by the Schwarz inequality), and  \ref{prop:18:2} follows from $\norm{A^*}=\norm{A}$ and the triangle inequality.
Part \ref{prop:18:reimsc} follows  directly from the definitions of $\Re, \Im$ for complex numbers and for matrices --- in \eqref{eq:150}.

To get~\ref{prop:18:3} observe first that $AE^v=E^{Av}$ by \eqref{def:Ev}, so it suffices to consider $A=I$. But using~\ref{prop:18:1} we get $\norm{E^v} \leq \norm{v}_{\CC^d}$ and $\norm{E^v} \geq \norm{E^ve_1}_{\CC^d}=\norm{v}_{\CC^d}$, so the equality holds.

Suppose that  $A \geq 0$ and let $\la_1\ldots, \la_d$ be all  the eigenvalues of $A$ repeated according to  their multiplicities. 
We thus get~\ref{prop:18:4} by
\[
    \norm{A} = 
    \max_{1 \leq i \leq d} \la_i \leq
    \sum_{i=1}^d \la_i = 
    \tr A \leq 
    d\max_{1 \leq i \leq d}\la_i =
    d \norm{A} \qedhere.
\]
\end{proof}
 
\subsection{Asymptotic  symbols and the affine interpolation}

Let   $S$  be an arbitrary set and   
$f, g:S\strz \CC$. We define the symbol $\asymp$ of ``{\em asymptotic similarity}'' of functions:

\beql{gae}
 f\asymp  g \ \  \iff \ \ \Exists_{c,C\in\RR_+} \Forall_{s\in S}\ \ c|g(s)|\leq |f(s)|\leq C |g(s)|
\eeq
(note the presence of the  absolute value in this definition). We shall use also alternative  notation: $f(s) \asympp{s} g(s)$.

If $S = \Nk{k}$ for some $k \in \ZZ$,  then $f$ is just a sequence from
$\lnk{k}{\CC}$, i.e. $f=(f(n))_{n \in \Nk{k}}=(f_n)_{n \in \Nk{k}}$, and  we shall consider its {\em affine interpolation} \  
$\af{f}:[k,+\infty)\strz\CC$, uniquely defined by the conditions:
\begin{enumerate}[label=(\alph*)]
    \item $\restr{\af{f}}{\Nk{k}}=f$,
    \item for each $n\in\Nk{k}$ \ \  $\restr{\af{f}}{[n, n+1]}$ \ is an affine function, i.e. a  function of the form \\ $[n, n+1]\ni t\mapsto at+c\in\CC$ \  for some    $a, c\in \CC$  (depending here  also on $n$). 
\end{enumerate}

In particular $\af{f}$ is a continuous interpolation of $f$, and one can easily check that it can be also given by the explicit formula:
\beql{af-form}
 \af{f}(t) = f_{\lfloor t \rfloor} + \{t\} \left(f_{\lfloor t \rfloor + 1} - f_{\lfloor t \rfloor}\right),\quad t \in [0, \infty)
\eeq
where $\{t\} = t - \lfloor t \rfloor$ is the fractional part of $t$.

The following  result joining the asymptotic similarity of sequences and of their  affine interpolations, will be 
convenient in the proof of our main result.

Let us prove first  the following   
result on quotients of two affine functions.
\begin{lemma}\label{lem:aff}
 Suppose that    $\alpha,  \beta, a_1, c_1, a_2, c_2 \in \RR$,  $\alpha < \beta$ and $a_2 t + c_2 \neq 0$ for any $t \in [\alpha, \beta]$.   Let  $\varphi : [\alpha, \beta] \strz \RR$ \ be given by 
\[
    \varphi(t) :=  \frac{a_1 t + c_1}{a_2 t + c_2},   \quad t \in [\alpha, \beta].
\]
Then $\varphi$ is monotonic and, in particular, its  maximal and  minimal value   is 
attained on  the set   $\{\alpha, \beta\}$.
\end{lemma}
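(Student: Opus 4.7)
The plan is to compute the derivative of $\varphi$ on the open interval $(\alpha,\beta)$ and observe that its sign is constant, so $\varphi$ is monotonic; monotonicity on a closed interval immediately yields that the extrema are attained at the endpoints.

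More concretely, since the denominator $a_2 t + c_2$ does not vanish on $[\alpha,\beta]$, the function $\varphi$ is differentiable there with
\[
    \varphi'(t) = \frac{a_1 (a_2 t + c_2) - a_2 (a_1 t + c_1)}{(a_2 t + c_2)^2} = \frac{a_1 c_2 - a_2 c_1}{(a_2 t + c_2)^2}.
\]
The numerator $a_1 c_2 - a_2 c_1$ is a constant (independent of $t$), and the denominator is strictly positive. Hence $\varphi'$ has constant sign on $[\alpha,\beta]$: if $a_1 c_2 - a_2 c_1 > 0$ then $\varphi$ is strictly increasing, if $a_1 c_2 - a_2 c_1 < 0$ then $\varphi$ is strictly decreasing, and if $a_1 c_2 - a_2 c_1 = 0$ then $\varphi$ is constant. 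In each case $\varphi$ is monotonic.

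A continuous monotonic function on a compact interval $[\alpha,\beta]$ attains its maximum and minimum at the endpoints, so both extrema lie in $\{\alpha,\beta\}$, as claimed.

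There is essentially no obstacle here; the only subtle point is that one must separately record the constant case (when $a_1 c_2 = a_2 c_1$), where $\varphi$ is both monotonically non-increasing and non-decreasing and trivially attains its values at the endpoints. One might alternatively note that $\varphi$ is a restriction of a Möbius transformation $z \mapsto (a_1 z + c_1)/(a_2 z + c_2)$ whose real trace on any interval avoiding the pole is known to be monotonic, but a direct derivative calculation as above is the shortest route.
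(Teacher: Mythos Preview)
Your proof is correct and follows essentially the same approach as the paper: compute $\varphi'(t) = (a_1 c_2 - a_2 c_1)/(a_2 t + c_2)^2$, observe that the sign is constant, and conclude monotonicity. The paper's version is slightly terser (grouping the nonzero cases together), but the argument is the same.
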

\begin{proof}
We have:
\[
    \varphi'(t) :=  \frac{a_1c_2-a_2c_1}{(a_2t+c_2)^2}
\]
and thus, we have only two cases:
\begin{itemize}
    \item $a_1 c_2 - a_2 c_1 = 0$, hence $\varphi$ is constant;
    \item $a_1 c_2 - a_2 c_1 \neq 0$, so $\varphi'$ has no zeros.
\end{itemize}
In both cases our assertion holds.
\end{proof}

\subsection{Analogs of ``J--L continuous interpolation''  
of a discrete  family of semi-norms} \label{J-Lsemi}

The main idea of  the Jitomirskaya--Last's new approach in   \cite{Jitomirskaya1999}  was ``technically'' based  on a continuous interpolation
of the discrete  family of semi-norms $\{\norm{\cdot}_{n}\}_{n \in \Nz}$ in $\lnz{\CC}$ to the family  $\{\norm{\cdot}_{t}\}_{t \in [0, +\infty)}$.

 For our purposes we shall extend this notion in two ways. Namely, let $V$ be a normed space,  $n_0 \in \ZZ$ and assume that   $X \in \ell(\Nk{ n_0}, V)$.
 
 For any $(n_1,t) \in \ZZ \times \RR$ such that $n_0 \leq n_1 \leq t$, we define 
\begin{equation} \label{eq:105a}
    \norm{X}_{[n_1,t]}:= 
    \bigg( 
    \sum_{k=n_1}^{\lfloor t \rfloor} 
        \norm{X_k}_V^2 + \{t\} \! \norm{X_{\lfloor t \rfloor + 1}}^2_V 
    \bigg)^{1/2},
\end{equation}
where $\lfloor t \rfloor$ and $\{t\}$ are the integer  and the fractional part of $t$, respectively,  and for $t=\infty$
\begin{equation} \label{eq:105b}
    \norm{X}_{[n_1,\infty]} := 
    \bigg( 
    \sum_{k=n_1}^{+\infty} \norm{X_k}^2_V
    \bigg)^{1/2}
\end{equation}
(which can  possibly be $+\infty$).

Moreover, if $V= \Mdc)$ for some $d \geq 1$, then similarly to \eqref{eq:105a} we define a continuous family based on minimum modulus (see \eqref{eq:23}) instead of matrix norm. So,   for any $(n_1,t) \in \ZZ \times \RR$ such that $n_0 \leq n_1 \leq t$ we consider 
\begin{equation} \label{eq:105c}
    \lnorm{X}_{[n_1,t]} = 
    \bigg( 
    \sum_{k=n_1}^{\lfloor t \rfloor} 
        \lnorm{X_k}^2 + \{t\} \!\! \lnorm{X_{\lfloor t \rfloor + 1}}^2 
    \bigg)^{1/2}.
\end{equation}

By \eqref{af-form} we can relate the above constructions with the notion of affine extension introduced in the previous subsection. The squares of the ``new objects'' are just the affine extensions of their discrete counterparts (see \eqref{disc-semi-a}  and   \eqref{disc-semi-b} below),
which are somewhat more natural and simpler for the context of operators ``acting on sequences'' considered in this paper.

\begin{observation}\label{prop:15}
The   notions defined by \eqref{eq:105a} and \eqref{eq:105c}  (i.e, for $t<+\infty$) 
satisfy respectively 
\[
    \norm{X}^2_{[n_1,t]} = \af{S_{X,n_1}}(t)
\]
and
\[
    \lnorm{X}^2_{[n_1,t]} = \af{\check{S}_{X,n_1}}(t)
\]
for  $t\geq n_1$, \ 
where \  $S_{X,n_1}, \ \ \check{S}_{X,n_1} :\Nk{n_1}\strz\RR$ \  are given for $n\geq n_1$ by
\beql{disc-semi-a}
    S_{X,n_1}(n) := 
    \norm{X}^2_{[n_1,n]} = 
    \sum_{k=n_1}^{n}\norm{X_k}^2,  
\eeq
\beql{disc-semi-b}
    \check{S}_{X,n_1}(n) := 
    \lnorm{X}^2_{[n_1,n]} = 
    \sum_{k=n_1}^{n}\lnorm{X_k}^2_V.
\eeq
\end{observation}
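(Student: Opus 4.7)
The plan is to verify the two identities by direct substitution of the definitions, reducing everything to a telescoping cancellation. Since the argument is identical for the two cases (operator norm versus minimum modulus), I would first treat \eqref{eq:105a} carefully and then indicate that the same computation applies verbatim to \eqref{eq:105c}.

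First I would fix $t \geq n_1$ and apply the explicit formula \eqref{af-form} to $f = S_{X,n_1} \in \ell(\Nk{n_1}, \RR)$, obtaining
\[
    \af{S_{X,n_1}}(t) = S_{X,n_1}(\lfloor t \rfloor) + \{t\}\bigl( S_{X,n_1}(\lfloor t \rfloor + 1) - S_{X,n_1}(\lfloor t \rfloor) \bigr).
\]
(Note $\lfloor t \rfloor \geq n_1$ since $t \geq n_1$, so this is well-defined.) By definition \eqref{disc-semi-a}, the two consecutive partial sums differ by exactly $\norm{X_{\lfloor t \rfloor + 1}}_V^2$, whence
\[
    \af{S_{X,n_1}}(t) = \sum_{k=n_1}^{\lfloor t \rfloor} \norm{X_k}_V^2 + \{t\}\, \norm{X_{\lfloor t \rfloor + 1}}_V^2,
\]
which is precisely $\norm{X}^2_{[n_1, t]}$ as defined in \eqref{eq:105a}. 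This settles the first identity.

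For the second identity, I would perform the same computation with $f = \check{S}_{X,n_1}$ in place of $S_{X,n_1}$. The telescoping step now yields $\check{S}_{X,n_1}(\lfloor t \rfloor + 1) - \check{S}_{X,n_1}(\lfloor t \rfloor) = \lnorm{X_{\lfloor t \rfloor + 1}}^2$, and comparison with \eqref{eq:105c} concludes the argument. I do not anticipate any obstacle: the statement is essentially a tautological rewriting that makes explicit the fact that the continuous family $\{\norm{\cdot}_{[n_1, t]}^2\}_{t \geq n_1}$ (respectively $\{\lnorm{\cdot}_{[n_1, t]}^2\}_{t \geq n_1}$) is precisely the piecewise-affine interpolant of its own restriction to integer arguments. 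The only point that deserves a brief mention is the boundary behaviour at integer $t$, where $\{t\} = 0$ and both sides collapse to $S_{X,n_1}(t)$ (resp.\ $\check{S}_{X,n_1}(t)$), confirming consistency.
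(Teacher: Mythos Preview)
Your proof is correct and follows exactly the approach the paper intends: the paper does not give a formal proof of this observation, treating it as immediate from formula \eqref{af-form}, which is precisely what you unfold. Nothing more is needed.
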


 Note also that in the scalar case $d=1$ for $X \in \ell(\Nk{ n_0}, \Mdc)$ we simply  have $\norm{X}_{[n_1,t]} = \lnorm{X}_{[n_1,t]}$ and  $S_{X,n_1} = \check{S}_{X,n_1}$.

Consider a second sequence $Y \in \ell(\Nk{ n_0}, V)$. 
Using the above Observation and Lemma~\ref{lem:aff} we get:

\begin{corollary} \label{szac-iloraznort-norn}
Suppose that 
$n_1 \in \ZZ$,   $t\in \RR$, $n_0 \leq n_1 \leq t$. \ Let \ 
$n:=\lfloor t \rfloor$ and assume   that 
$\norm{Y}_{[n_1,n]} \neq 0$. \ \ 
Then there exist such $\npod, \nnad \in \{n, \ n+1\}$  that
\[
    \frac{\norm{X}_{[n_1,\npod]}}{\norm{Y}_{[n_1,\npod]}} \leq 
    \frac{\norm{X}_{[n_1,t]}}{\norm{Y}_{[n_1,t]}} \leq
    \frac{\norm{X}_{[n_1,\nnad]}}{\norm{Y}_{[n_1,\nnad]}}.
\]
If $V = \Mdc$ then the analogous result with all the ``$\norm{\cdot}$'' replaced by 
``$\lnorm{\cdot}$'' is also true.
\end{corollary}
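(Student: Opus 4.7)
The plan is to reduce everything to the one-variable statement of Lemma~\ref{lem:aff} applied to the quotient of the \emph{squared} semi-norms, then recover the unsquared version by monotonicity of the square root on $[0, +\infty)$.

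First, I would fix $n := \lfloor t \rfloor$ and consider the function $s \mapsto \{t\}$ on $[n, n+1]$ (so $s$ ranges over $[0, 1]$). By Observation~\ref{prop:15} and the explicit formula \eqref{af-form} for the affine extension, on the interval $t \in [n, n+1]$ we can write
\[
    \norm{X}^2_{[n_1, t]} = \norm{X}^2_{[n_1, n]} + \{t\}\,\norm{X_{n+1}}^2_V,
    \qquad
    \norm{Y}^2_{[n_1, t]} = \norm{Y}^2_{[n_1, n]} + \{t\}\,\norm{Y_{n+1}}^2_V,
\]
so both the numerator and denominator of the squared ratio are affine functions of $s := \{t\} \in [0,1]$. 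Setting $a_1 := \norm{X}^2_{[n_1, n]}$, $b_1 := \norm{X_{n+1}}^2_V$, $a_2 := \norm{Y}^2_{[n_1, n]}$, $b_2 := \norm{Y_{n+1}}^2_V$, I get
\[
    \frac{\norm{X}^2_{[n_1, t]}}{\norm{Y}^2_{[n_1, t]}} = \frac{a_1 + s b_1}{a_2 + s b_2}, \qquad s \in [0,1].
\]

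Next I would check the non-vanishing of the denominator: the hypothesis $\norm{Y}_{[n_1, n]} \neq 0$ gives $a_2 > 0$, and since $b_2 \geq 0$ the denominator satisfies $a_2 + s b_2 \geq a_2 > 0$ for every $s \in [0,1]$. Applying Lemma~\ref{lem:aff} to this quotient of affine functions on $[0,1]$, the map $s \mapsto \varphi(s)$ is monotonic, so its minimum and maximum on $[0,1]$ are attained at the endpoints $s = 0$ and $s = 1$, which correspond to $t = n$ and $t = n+1$ respectively. Thus, choosing appropriately one of $\npod, \nnad \in \{n, n+1\}$, one obtains
\[
    \frac{\norm{X}^2_{[n_1, \npod]}}{\norm{Y}^2_{[n_1, \npod]}} \leq \frac{\norm{X}^2_{[n_1, t]}}{\norm{Y}^2_{[n_1, t]}} \leq \frac{\norm{X}^2_{[n_1, \nnad]}}{\norm{Y}^2_{[n_1, \nnad]}}.
\]
Taking square roots (all quantities are non-negative) yields the stated inequality for the unsquared ratios.

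Finally, the ``$\lnorm{\cdot}$'' version uses the second identity of Observation~\ref{prop:15} together with \eqref{eq:105c}, so the numerator and denominator of the squared quotient $\lnorm{X}^2_{[n_1, t]}/\lnorm{Y}^2_{[n_1, t]}$ are again affine in $\{t\}$ on $[n, n+1]$; the rest of the argument proceeds verbatim. I do not expect any serious obstacle: the only subtlety is verifying that the denominator does not vanish on all of $[n, n+1]$, which is immediate from the hypothesis and the fact that $t \mapsto \norm{Y}^2_{[n_1, t]}$ (respectively $\lnorm{Y}^2_{[n_1, t]}$) is non-decreasing.
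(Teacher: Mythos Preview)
Your proposal is correct and follows essentially the same approach as the paper: apply Observation~\ref{prop:15} to see that the squared semi-norms are affine on $[n,n+1]$, then invoke Lemma~\ref{lem:aff} on the squared ratio and take square roots. The paper's proof is just a more compressed version of exactly this argument.
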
 
\begin{proof}
It suffices to use Observation~\ref{prop:15} and then  Lemma~\ref{lem:aff} to the function $\varphi$ given on $[n, \ n+1]$  by the formula  \ $\varphi(t) := \left(\frac{\norm{X}_{[n_1,t]}}{\norm{Y}_{[n_1,t]}} \right)^2$.
\end{proof}

The above  result is covenient when we consider two variants of definitions of ``subordination type'' notions: the first one --- based on the ratio of the seminorms only for the `discrete $n\in\N$', and the second --- based on the ratio of the seminorms  for all the `continuous $t>0$'. It often allows to prove the equivalence of such two approaches (see, e.g., subsection \ref{sec:m-v-nonsubodinacy}).

\section{Block Jacobi operator. The spectral matrix measure and a spectral representation} \label{sec:3n}

We denote ``the size of the block'' for BJM by    $d$, $d\in\NN$, and for the whole  paper we assume that  $(A_n)_{n \in \Nz}$ and $(B_n)_{n \in \Nz}$ are  sequences of matrices from $M_d(\CC)$ such that 
\beql{zal-odw+sym}
    \det A_n \neq 0, \ \ \  
    B_n = B_n^*, \quad n \in \Nz.
\eeq
Define a \emph{block Jacobi matrix}  
\begin{equation*}
    \calJ =   
    \begin{pmatrix}
        B_0	& A_0	&		&       \\
        A_0^*	& B_1	& A_1	&         \\
            & A_1^*	& B_2 	& \ddots   \\
            &		&		\ddots &  \ddots 
    \end{pmatrix}.
\end{equation*}
The pair of the sequences $(A_n)_{n \in \NN_0}$ and $(B_n)_{n \in \NN_0}$ will be called \emph{Jacobi parameters of $\calJ$}.
In fact,  we mean that    $\calJ$ is the linear operator ({\em ``the formal block Jacobi operator''}) acting on  the linear space $\ell(\Nz, \CC^d$) of all the $\CC^d$ sequences, and  the   action of $\calJ$ is well-defined via formal matrix multiplication:   
\[
    \calJ : \ell(\Nz, \CC^d)\strz \ell(\Nz, \CC^d), 
\]
i.e.,  for any  $u\in\ell(\Nz, \CC^d)$
\beql{eq:defformal}
    \left(\calJ u\right)_n :=
    \opcje{ B_0 u_0 + A_0 u_{1}}{n=0}{	A_{n-1}^* u_{n-1} + B_n u_n + A_n u_{n+1}}{n\in\N.} 
\eeq

Recall  now two important  operators related to $\calJ$ and acting in the Hilbert space $\ldnz{\CC^d}$: \  $\Jmin$ and \  $J$  (which will  coincide  in most of our further considerations). 
The operator $\Jmin$ ({\em the minimal block Jacobi operator}) is simply the closure 
in $\ldnz{\CC^d}$ 
of the operator in $\ldnz{\CC^d}$ being the restriction: \  $\restr{\calJ}{\lfinnz{\CC^d}}$. 
And to define 
$J$ ({\em the maximal block Jacobi operator}) we first choose its  domain in  a usual way for ``maximal-type'' operators:
\beql{maxJdom}
    \Dom(J) := \big\{ u\in\ldnz{\CC^d}: \calJ u\in\ldnz{\CC^d} \big\},
\eeq
and then we  define \  $J:=\restr{\calJ}{\Dom(J)}$.
   
When both $(A_n)_{n \in \Nz}$ and $(B_n)_{n \in \Nz}$ are bounded sequences, then obviously we have only one operator $\Jmin=J\in\calB(\ldnz{\CC^d})$, which is symmetric, so  self-adjoint. But here we consider also unbounded cases, so let us recall  the following result 
\begin{fact}\label{factsaJ}
$(\Jmin)^*=J$. Moreover, TFCAE:
\begin{enumerate}[label=(\roman*)]
    \item $\Jmin$ is s.a.,
    \item $J$ is s.a.;
\end{enumerate}
and if  one of the above conditions holds, then   $\Jmin=J$.
\end{fact}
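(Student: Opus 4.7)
The plan is to first identify the adjoint of $\calJ_0 := \calJ\restr{\lfinnz{\CC^d}}$, and then deduce everything else from this. The cornerstone is a ``formal self-adjointness'' of the symbol $\calJ$: for every $u \in \lnz{\CC^d}$ and every $v \in \lfinnz{\CC^d}$, both sums
\[
    \langle \calJ u, v\rangle_{\el^2} = \sum_{n}\langle(\calJ u)_n, v_n\rangle_{\CC^d}, \qquad
    \langle u, \calJ v\rangle_{\el^2} = \sum_{n}\langle u_n, (\calJ v)_n\rangle_{\CC^d}
\]
are finite (by finiteness of the support of $v$) and coincide. This is a direct computation using only $B_n^* = B_n$ and the pairing between $A_n$ and $A_n^*$ at neighboring positions; a single reindexing in the off-diagonal sums converts one expression into the other.

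From this identity the equality $(\calJ_0)^* = J$ is routine. The inclusion $J \subset (\calJ_0)^*$ is immediate once $u \in \Dom(J)$ (so $\calJ u \in \ldnz{\CC^d}$), because then $\langle Ju, v\rangle_{\el^2} = \langle u, \calJ_0 v\rangle_{\el^2}$ for every $v \in \lfinnz{\CC^d}$. For the reverse inclusion, given $u \in \Dom((\calJ_0)^*)$ with $(\calJ_0)^* u = w$, one plugs $v = \delta_n(e_i)$ into the defining identity $\langle w, v\rangle_{\el^2} = \langle u, \calJ_0 v\rangle_{\el^2}$, applies the formal computation on the right-hand side, and reads off coordinatewise that $w_n = (\calJ u)_n$. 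Hence $\calJ u = w \in \ldnz{\CC^d}$, so $u \in \Dom(J)$ and $Ju = w$. In particular $(\calJ_0)^*$ is densely defined (it contains $\lfinnz{\CC^d}$ in its domain), so $\calJ_0$ is closable, $\Jmin := \overline{\calJ_0}$ is well defined, and the standard identity $(\overline{\calJ_0})^{*} = (\calJ_0)^{*}$ yields $(\Jmin)^* = J$.

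The equivalence in the ``Moreover'' part then follows from general adjoint-theoretic facts. The operator $\calJ_0$ is symmetric (by the formal self-adjointness identity with \emph{both} $u,v \in \lfinnz{\CC^d}$), so its closure $\Jmin$ is symmetric as well; in particular $\Jmin \subset (\Jmin)^* = J$. If $\Jmin$ is s.a., then $\Jmin = (\Jmin)^* = J$, so $J$ is s.a.\ and $\Jmin = J$. Conversely, if $J$ is s.a., then $J = J^* = ((\Jmin)^*)^* = (\Jmin)^{**} = \overline{\Jmin} = \Jmin$ (using that $\Jmin$ is closed), hence $\Jmin = J$ is s.a.

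The only non-routine step in the entire argument is the opening formal-adjoint computation; once that bookkeeping of index shifts is in hand, the rest is abstract operator theory of adjoints and closures applied to the already-established identity $(\Jmin)^* = J$.
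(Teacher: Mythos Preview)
Your argument is correct and complete. The paper itself does not give a proof of this fact; it simply cites \cite[Chapter VII.\S2.5]{Berezanskii1968} for the identity $(\Jmin)^*=J$ and then invokes \cite[formula (7.1.22)]{Simon2015Book4} for the equivalence (i)$\Leftrightarrow$(ii) and the equality $\Jmin=J$. Your route is a fully self-contained version of precisely that standard argument: the summation-by-parts identity establishing formal symmetry of $\calJ$, the coordinate-testing with $\delta_n(e_i)$ to identify $(\calJ_0)^*$ with the maximal operator $J$, and then the abstract adjoint/closure calculus $(\Jmin)^*=(\overline{\calJ_0})^*=(\calJ_0)^*$ and $J^*=(\Jmin)^{**}=\Jmin$. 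What you gain is independence from the references and an explicit check that the argument goes through without the extra hypothesis $A_n=A_n^*$ present in Berezanski{\u\i}'s treatment (the paper notes in a footnote that this extension is ``similar'', and your computation confirms it).
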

See \cite[Chapter VII.§2.5]{Berezanskii1968}  for\footnote{Actually, in \cite[Chapter VII.§2.5]{Berezanskii1968} only the case $A_n = A_n^*$ for all $n \geq 0$ was considered, but the proof in the general case is similar.} $\Jmin^* = J$, and the remaining part follows from this by \cite[formula (7.1.22)]{Simon2015Book4}.

For the main results of the theory presented here the maximality of the domain is crucial. Moreover, we study  mainly ``the self-adjoint case'', so (by Fact~\ref{factsaJ}) the best choice here is to focus   just   only on the operator $J$, later on\footnote{So, it means  in particular that  we do not study here any other possible s.a. extensions of $\Jmin$ than $J$. Note that the same is important for the classical ($d=1$) subordinate theory --- see \cite{Khan1992}.}.

By \eqref{maxJdom} and \eqref{eq:defformal} we get 
\beql{fin}
    \lfinnz{\CC^d}\subset\Dom(J), \qquad 
    J\left(\lfinnz{\CC^d}\right) \subset \lfinnz{\CC^d},
\eeq
so in particular $J$ is densely defined by \eqref{lfin-denlin}. Moreover, by \eqref{eq:defformal} we compute
\beql{Jnadel}
    J\left( \delta_n(v)\right) =
    \delta_{n-1}(A_{n-1}v)+\delta_{n}(B_{n}v)+\delta_{n+1}(A_{n}^*v), \qquad 
    v \in \CC^d,\  n\in\Nz, 
\eeq
where we additionally denote
\[
    \delta_{-1}(v) := 0, \qquad v \in \CC^d.
\]

\subsection{The finite-cyclicity and the canonical cyclic  system  } \label{fincyc}

In the scalar $d=1$ case  Jacobi operator  $J$ is cyclic with a cyclic vector $\varphi := \de_{0}(e_1)$ ({\em the canonical cyclic vector for $J$}), which means that the space 
\beql{cyclic}
    \lin\{J^n\varphi: \ n \in \Nz\} 
\eeq
is dense in $\ldnz{\CC}$ (here, for $d=1$, we simply have $e_1=1\in\CC$).
--- Indeed,  this is known that   the above space is just  equal to $\lfinnz{\CC}$ for such  $\varphi$. Cyclicity plus self-adjointness provides a very simple spectral representation of the operator.

So, suppose now, that ``the scalar'' $J$ is s.a. and  consider: \ 
 $\xx$ --- the identity function on $\RR$, \  $\xx(t)=t$ for $t\in\RR$, and $\mu$ --- ``the scalar'' spectral measure  for $J$ and $\varphi$, \ i.e. \  $\mu = E_{J,\varphi}$ (see, Section~\ref{intr-not} for the spectral notation).

By the  well-known spectral result, $J$,  as  a cyclic s.a. operator,  is unitary equivalent  to the operator of the multiplication by $\xx$ in the space $L^2(\mu)$.
 
 This one-dimensional  result has also its analog for the block-Jacobi case with arbitrary $d$.
 First of all, instead of cyclicity, we consider here the so-called  {\em finite-cyclicity} notion (see \cite{Moszynski2022}). It  means that for some $k\in\NN$  there exists   {\em a cyclic system 
 $\vec{\varphi} = (\varphi_1, \ldots, \varphi_k)$    for $J$}, i.e., a system of such  vectors from $\Dom(J^{\infty})$, that 
 \beql{fincyclic}
\lin\{J^n\varphi_j: \ n\in\Nz, \ j=1,\ldots,k\} 
\eeq
is dense in $\ldnz{\CC^d}$.
In our general $d$-dimensional case the choice of  $\vec{\varphi}$ can be done   in an analogical way, as it was  for $d=1$, namely define
\beql{fix-fincyc}
    \vec{\varphi} := (\varphi_1, \ldots, \varphi_d), \quad
    \varphi_j := \de_{0}(e_j), \ \  j=1,\ldots,d.\ \ \eeq
The following  result is true.

\begin{proposition} \label{prop-fincyc}
The system $\vec{\varphi}$ is a cyclic system for $J$.
\end{proposition}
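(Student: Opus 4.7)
The plan is to prove the stronger statement that the linear span $V := \lin\{J^n\varphi_j : n \in \Nz,\ j=1,\ldots,d\}$ actually coincides with $\lfinnz{\CC^d}$; density in $\ldnz{\CC^d}$ then follows immediately from \eqref{lfin-denlin}. Before the span calculation, I first need the mild technical point that each $\varphi_j$ genuinely lies in $\Dom(J^\infty)$. This is easy: since $\varphi_j = \delta_0(e_j) \in \lfinnz{\CC^d}$, the invariance property \eqref{fin} gives $J^n\varphi_j \in \lfinnz{\CC^d} \subset \Dom(J)$ for every $n \in \Nz$ by iteration, so all powers of $J$ are defined on $\varphi_j$.

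The heart of the argument is an induction on $n$ showing that $\delta_n(v) \in V$ for every $v \in \CC^d$ and every $n \in \Nz$. For the base $n=0$, the vectors $\delta_0(e_1),\ldots,\delta_0(e_d)$ are precisely $\varphi_1,\ldots,\varphi_d$ and span $\delta_0(\CC^d)$. For the inductive step, assume $\delta_k(\CC^d) \subset V$ for all $k \leq n$. Formula \eqref{Jnadel} rearranges to
\[
\delta_{n+1}(A_n^* v) \;=\; J(\delta_n(v)) - \delta_n(B_n v) - \delta_{n-1}(A_{n-1} v),
\]
and each of the three terms on the right is in $V$ by the inductive hypothesis together with the obvious fact that $J$ maps $V$ into $V$. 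The \emph{crucial} use of our standing hypothesis \eqref{zal-odw+sym} enters here: $A_n$ is invertible, hence so is $A_n^*$, so as $v$ runs through $\CC^d$ the vector $A_n^* v$ also runs through all of $\CC^d$. Thus $\delta_{n+1}(\CC^d) \subset V$, completing the induction.

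Consequently $V \supset \lin\{\delta_n(v) : n \in \Nz,\ v \in \CC^d\} = \lfinnz{\CC^d}$ by \eqref{lfin-denlin}, while the reverse inclusion $V \subset \lfinnz{\CC^d}$ follows from \eqref{fin}. Combined with $\cl{\lfinnz{\CC^d}} = \ldnz{\CC^d}$, this proves that $\vec{\varphi}$ is a cyclic system for $J$. There is really no serious obstacle here; the only place one has to be careful is to cite the invertibility of $A_n$ from \eqref{zal-odw+sym} to solve for $\delta_{n+1}(w)$ for an arbitrary $w \in \CC^d$ — without that, the induction would only produce $\delta_{n+1}$ of vectors in $\Ran A_n^*$ and would not close up.
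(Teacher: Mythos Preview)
Your proof is correct and follows essentially the same inductive idea as the paper's, but your presentation is more economical. The paper first proves a lemma that $J^k\delta_0(w)=u+\delta_k(C_kw)$ with $u\in Y_{k-1}$ and $C_k=(A_0\cdots A_{k-1})^*$, then uses invertibility of $C_{n+1}$ in a second induction to show $Y_n=\sum_{k=0}^n J^k(X_0)$; you collapse this two-step argument into a single induction by exploiting the trivial $J$-invariance of $V$ and inverting only one $A_n^*$ at a time via the rearranged \eqref{Jnadel}. Both arguments rest on the same two ingredients---the tridiagonal recurrence \eqref{Jnadel} and the invertibility hypothesis \eqref{zal-odw+sym}---so the underlying mathematics is identical; your route simply avoids the explicit bookkeeping of the cumulative product $C_k$ and the auxiliary spaces $X_n,Y_n$.
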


\begin{proof}
For each  $n \in \NN_0$ denote\footnote{Here we use the standard operation of the sum of subsets of a linear space, i.e., $\sum_{k=0}^n D_k:= \{\sum_{k=0}^nx_k:\ x_k\in D_k \ \mbox{ for any } k=0, \ldots n \}$ \ for subsets $D_0, \ldots D_n$.}
\beql{for:XnYn}
    X_n := \left\{ x \in \ldnz{\CC^d}: \Forall_{m \neq n} \  x_m = 0 \right\}, \hspace{3em} 
    Y_n := \sum_{k=0}^n X_k.
\eeq
So   \ $X_n, Y_n\subset\lfinnz{\CC^d}$ \  and
\beql{for:Xnde}
    X_n = 
    \left\{ \de_{n}(w): w \in \CC^d \right\} = 
    \lin \left\{ \de_{n}(e_j): \ \  j=1,\ldots,d \right \}, \qquad n \in \Nz.
\eeq
In particular 
\beql{X0phi}
    X_0 = \lin \left\{ \varphi_j: \ \  j=1,\ldots,d \right\}.
\eeq

Using \eqref{for:Xnde} and \eqref{Jnadel} we get
\[
    J(X_n) \subset Y_{n+1}, \qquad n \in \Nz,
\]
so  also 
\beql{JYn}
    J(Y_n) \subset Y_{n+1}, \qquad n \in \Nz.
\eeq
Using this by the obvious induction  we obtain
\beql{JnX0}
    J^n(X_0) \subset Y_{n}, \qquad n \in \Nz.
\eeq

Denote 
 $Y_{-1}:=\{0\}$ and
for $k \in \NN_0$ denote also
\beql{forCk}
    C_k := \opcje{(A_0\cdot \cdots \cdot A_{k-1})^*}{k>0}{I}{k=0,}
\eeq
so in particular $C_k$ is invertible.
To continue the proof, we shall first prove the following  two results.

\begin{lemma}\label{lem-fincyc}
For any $k\in\NN_0$
\beql{n-lem-fincyc}
    \Forall_{w \in \CC^d} \  \Exists_{u\in Y_{k-1}} \  J^k\de_{0}(w)=u+\de_{k}(C_kw).
\eeq
\end{lemma}

\begin{proofof}{Lemma~\ref{lem-fincyc}}
For $k=0$ and $w\in\CC^d$ we have 
$J^k \de_{0}(w) = \de_{0}(w) = 0 + \de_{0}(C_0w)$, so \eqref{n-lem-fincyc} holds for $k=0$.
Suppose now that it holds for some $k \in \Nz$. Then by \eqref{n-lem-fincyc} for $k$ and by \eqref{Jnadel}, for $w\in\CC^d$
\[
    J^{k+1} \de_{0}(w) = 
    J(J^k \de_{0}(w)) = 
    J(u) + J \left( \de_{k}(C_kw) \right) = 
    J(u) + u' + \de_{k+1}(A_k^*C_kw),
\]
with some $u \in Y_{k-1}$, $u' \in Y_{k}$. Finally, by \eqref{JYn} and \eqref{forCk}
\[
    J^{k+1} \de_{0}(w) = 
    u'' + \de_{k+1}(A_k^*C_kw) = 
    u'' + \de_{k+1}(C_{k+1}w),
\]
with some $u''\in Y_{k}$. So \eqref{n-lem-fincyc} holds for $k+1$, and we obtain  our assertion by  induction.
\end{proofof}

We shall use this lemma to prove the result below.

\begin{fact}\label{fact-fincyc}
For any $n \in \NN_0$
\beql{n-fact-fincyc}
    Y_n = \sum_{k=0}^n J^k(X_0).
\eeq
\end{fact}

\begin{proofof}{Fact~\ref{fact-fincyc}}
We get ``$\supset$'' from \eqref{JnX0}. Let us prove ``$\subset$'' by induction. For $n=0$ the assertion is obvious. Now  consider $n\in\Nz$ and suppose that 
$
    Y_n \subset \sum_{k=0}^n J^k(X_0)
$.

Then 
\[
    \lin \left( Y_n \cup \ J^{n+1}(X_0) \right) \subset 
    \sum_{k=0}^{n+1} J^k(X_0),
\]
by the linearity of the RHS. Thus, by \eqref{for:XnYn},  to get 
$
    Y_{n+1} \subset \sum_{k=0}^{n+1} J^k(X_0)
$, it suffices to prove
\beql{Xnpj}
    X_{n+1} \subset \lin \left( Y_n\cup\ J^{n+1}(X_0) \right).
\eeq
Indeed, if $w\in\CC^d$, then by Lemma~\ref{lem-fincyc} 
\[
    \de_{n+1}(w) = v + J^{n+1} \de_{0}(C_{n+1}^{-1}w),
\]
with some $v \in Y_n$. Therefore, by \eqref{for:Xnde} we get 
\eqref{Xnpj}.
\end{proofof}

Let us continue the proof of Proposition~\ref{prop-fincyc}.
Using again  the standard argumentation concerning the linear spaces generated by a subset,
we see by \eqref{X0phi} that 
\beql{ozn}
    \lin \{ J^k \varphi_j:  k=0,\ldots,n, \ j=1,\ldots,d \} = 
    \sum_{k=0}^n J^k(X_0), \qquad n \in \Nz.
\eeq
So, by Fact~\ref{fact-fincyc} and \eqref{ozn}
\[
    \lin \{ J^n \varphi_j: \ n \in \Nz, \ j=1,\ldots,d \} =
    \bigcup_{n=0}^{+\infty} \lin \{ J^k\varphi_j:  k=0,\ldots,n, \ j=1,\ldots,d \} = 
    \bigcup_{n=0}^{+\infty} Y_n = 
    \lfinnz{\CC^d}, 
\]
and the density of $\lfinnz{\CC^d}$ in $\ldnz{\CC^d}$ finishes the proof.
\end{proof}

Surely, this choice of a cyclic system for $J$ is not unique. The particular  $\vec{\varphi}$ defined by \eqref{fix-fincyc} is called {\em the canonical cyclic  system  for $J$}.
 
\subsection{The spectral matrix measure $M$  
and the representation of $J$ as the multiplication operator.} \label{L2M}
 
The next notions  which should be generalized, when we are moving from the scalar to the block case, is the classical $L^2$- measure type  Hilbert space with   ``the scalar  non-negative''   spectral measure $\mu$ for JO  $J$, when $J$ is s.a.  The spectral measure $\mu$ is  defined on Borel subsets of $\RR$, and it   should be replaced by a  less-known object --- the so-called {\em spectral matrix measure} $M$ for BJO $J$, if we assume that it is s.a. And so, the classical   $L^2(\mu)$ Hilbert space will  be replaced by a new  type Hilbert space --- the  $L^2$- matrix measure space. The spectral matrix measure for BJO $J$ is a particular example of the general notion of matrix measure (see Appendix~\ref{vecmacmeas}).
 Similarly to the ``scalar'' spectral measure for  the  JO case, it is  defined on  $\Bor(\RR)$, but the values of $M$ are non-negative  $d\times d$ matrices, instead of  non-negative numbers.  Is tightly related to $J$, as in the JO case.  E.g., $J$ can be recovered from its spectral matrix measure  up to a unitary equivalence, as follows from the representation Theorem~\ref{unit-rep-bl} below.

 Let's assume for the rest of this subsection  that  $J$ is s.a.  We use here the following terminology. Let $\vec{\varphi}=(\varphi_1, \ldots, \varphi_k)$ be the canonical cyclic system for $J$ given by \eqref{fix-fincyc}.

\begin{definition}\label{smm}
{\em The  spectral matrix measure $M=M_J= E_{J,\vec{\varphi}}$  \footnote{See also \cite{Moszynski2022}; note the more general notation $E_{A,\vec{\psi}}$ used there for  the spectral matrix measure of any  s.a. finitely cyclic  operator $A$ and  a cyclic system $\vec{\psi}$ for $A$.}  \  for    $J$} is given by  
\begin{equation}
    \label{eq:IV:2.1}
    M: \Bor(\RR) \to \Mdc, \qquad 
    M(\om):= 
    \Big(E_{J,\varphi_j, \varphi_i}(\om)  \Big)_{i,j=1,\ldots,d}\in \Mdc  , \quad 
    \om \in \Bor(\RR)
\end{equation}
(recall that $E_{J,\varphi_j, \varphi_i}(\om)= \lesc E_J(\om)\varphi_j, \varphi_i \risc_{\el^2}$ by \eqref{eq:I:spmes}).
 \end{definition}\ \

We use rather $M$ and not   $M_J$ in this paper to denote the  spectral matrix measure   for    $J$,  when $J$ is fixed.

The analog of the scalar-Jacobi unitary representation result, mentioned before, can be formulated for  our block-Jacobi case in a short way,  as follows.

\begin{theorem}\label{unit-rep-bl}
 $J$ is unitary equivalent  to the operator of the multiplication by $\xx$ in the space $L^2(M)$.
\end{theorem}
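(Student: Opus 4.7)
The plan is to construct explicitly a unitary $U\colon L^2(M)\to\ldnz{\CC^d}$ intertwining multiplication by $\xx$ with $J$, using the functional calculus of $J$ together with the finite-cyclicity established in Proposition~\ref{prop-fincyc}. The natural candidate is, for matrix/vector-polynomial data,
\[
  U_0(p) := \sum_{j=1}^d p_j(J)\varphi_j, \qquad p=(p_1,\dots,p_d) \text{ a $\CC^d$-valued polynomial},
\]
which makes sense because each $\varphi_j\in\Dom(J^\infty)$ (they are finite sequences and hence lie in $\Dom(J^n)$ for every $n$, by \eqref{fin}).

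First, I would verify the isometry property. Using the definition \eqref{eq:IV:2.1} of $M$, the spectral theorem for $J$, and the polarization identity,
\[
  \|U_0(p)\|_{\el^2}^2 = \sum_{i,j=1}^d \langle p_j(J)\varphi_j,\, p_i(J)\varphi_i\rangle_{\el^2}
  = \sum_{i,j=1}^d \int_\RR \overline{p_i(\la)}\,p_j(\la)\, \ud E_{J,\varphi_j,\varphi_i}(\la).
\]
By Definition~\ref{smm} the matrix $(E_{J,\varphi_j,\varphi_i})_{i,j}$ is precisely $M$, so the right-hand side is the $L^2(M)$ norm squared of $p$ (this is exactly the standard quadratic form associated with the matrix measure $M$ as recalled in Appendix~\ref{vecmacmeas}). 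Hence $U_0$ is a well-defined isometry from $\CC^d$-valued polynomials (mod $M$-null functions) into $\ldnz{\CC^d}$.

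Next I would establish density on both sides. On the range side,
\[
  U_0(\text{polynomials}) = \lin\{J^n\varphi_j : n\in\Nz,\; j=1,\dots,d\},
\]
which is dense in $\ldnz{\CC^d}$ by Proposition~\ref{prop-fincyc}. On the domain side, the isometry automatically transports this density statement back: since polynomials embed isometrically and their image is dense in $\ldnz{\CC^d}$, the polynomials must be dense in $L^2(M)$ as well (otherwise we would fail to be surjective onto a dense set after extension). Therefore $U_0$ extends by continuity to a unitary $U\colon L^2(M)\to\ldnz{\CC^d}$.

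Finally, the intertwining. For any polynomial $p$ we have $(\xx\cdot p)_j(J) = J\, p_j(J)$ on $\varphi_j$, so
\[
  U_0(\xx\cdot p) = \sum_{j=1}^d J\,p_j(J)\varphi_j = J\,U_0(p).
\]
Thus $U$ conjugates multiplication by $\xx$ (restricted to polynomials) to $J$ (restricted to $U_0(\text{polynomials})$). Because $J$ is self-adjoint and multiplication by $\xx$ in $L^2(M)$ is self-adjoint, and polynomials form a core for the latter while their image is contained in $\Dom(J)$ and invariant under $J$, the intertwining propagates to the maximal domains by the uniqueness of self-adjoint extensions of essentially self-adjoint restrictions. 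I expect the main delicate point to be exactly this last step in the possibly unbounded case: one must check that $U_0(\text{polynomials})$ is a core for $J$ and that polynomials are a core for multiplication by $\xx$ on $L^2(M)$, so that the bounded unitary $U$ actually carries $\Dom(M_\xx)$ onto $\Dom(J)$ rather than only onto a dense subspace of it. Once this core argument is in place, the two self-adjoint operators agree on a common core after conjugation, and hence everywhere, completing the proof.
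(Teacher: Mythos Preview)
The paper does not give its own proof of this theorem; it simply cites \cite[$\xx$MUE Theorem]{Moszynski2022} for the full argument in the finitely-cyclic setting. So there is nothing to compare against here except the correctness of your proposal on its own terms.

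Your overall strategy is the standard and correct one, and Steps~1, 2 and the range-density part of Step~3 are fine. The genuine gap is your argument for density of $\CC^d$-valued polynomials in $L^2(M)$. You write that ``the isometry automatically transports this density statement back \dots\ (otherwise we would fail to be surjective onto a dense set after extension)''. This is circular: the isometry $U_0$ extends by continuity to a unitary from the \emph{closure of the polynomials in $L^2(M)$} onto $\ldnz{\CC^d}$, and nothing in that statement forces that closure to be all of $L^2(M)$. For a general finite matrix measure polynomials need not be dense in $L^2(M)$; this is precisely the determinacy question for the matrix Hamburger moment problem, and it is a genuine extra fact (equivalent, in fact, to the self-adjointness of $J$) that must be invoked or proved separately. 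The same issue reappears in your Step~5 when you need polynomials to be a core for multiplication by~$\xx$.

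There are two clean ways to close the gap. Either (a) cite the determinacy of the matrix moment problem under the hypothesis that $J$ is self-adjoint (see e.g.\ \cite[Chapter VII]{Berezanskii1968}), which gives polynomial density directly; or (b) avoid polynomials altogether and define $U_0(f):=\sum_{j=1}^d f_j(J)\varphi_j$ for \emph{bounded Borel} $f$ via the functional calculus. The isometry computation is identical, bounded Borel $\CC^d$-valued functions are always dense in $L^2(M)$, and the range still contains $\lin\{J^n\varphi_j\}$ so is dense. For the intertwining one then checks $(J-z\Id)^{-1}U = U(M_\xx - z\Id)^{-1}$ on bounded Borel data and concludes by equality of resolvents. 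Route~(b) is more self-contained and sidesteps the core argument you flagged as delicate.
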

For the full formulation and  the  proof (in the general finitely cyclic case) see  \cite[$\xx$MUE Theorem] {Moszynski2022}\footnote{Also the detailed definition of the multiplication by a function operator in $L^2$-matrix measure spaces is presented there.}. 

In particular, the above  result means that  the  spectral matrix measure $M$ of   $J$ ``contains''  all the important spectral information about $J$, similarly to the  spectral measure $\mu$ in the scalar Jacobi case.
Thus properties of $M$ are  important for spectral studies in many concrete examples. For  instance, for such typically studied spectral information, as: the   absolute continuity (or   the singularity) of $J$ in  some  subset of $\RR$, the location of the  spectrum and of some  particular parts of spectra  of $J$, etc.

On the other hand, ``the nice properties'' of the trace measure   $\tr_M$ (see \ref{trM}) for   matrix measure $M$  suggest that  instead of dealing with spectral matrix measure, somewhat sophisticated at times,      
it could be  more useful to deal with its trace measure, being just a classical object --- a finite Borel measure. We can see it, e.g.,  when we try to  ``control'' the above mentioned spectral properties of $J$ related,  to the   absolute continuity or singularity.   In   Fact~\ref{prop:III:3}, Fact~\ref{acscM}, and  Lemma~\ref{techLem} from  Appendix~\ref{vecmacmeas} we  discussed  this problem in more details for its  abstract (vector) measure theory aspect. Their  main ``spectral operator theory'' consequences for $J$ are  Proposition~\ref{cor:1} and Proposition~\ref{thm:1} in Section~\ref{sec:4}. 

At the end of this subsection,  to better illustrate  the notion of the spectral matrix measure for $d>1$,  we compute $M$  in a  general case of   ``simplest'' block Jacobi operators, namely in  the case of diagonal  parameters of  the block Jacobi matrix.

\begin{example}\label{smm-diag}

Consider $d\geq1$ and  diagonal blocks defining   $d\times d$ block Jacobi operator $J$ (the maximal one, as usual here):
\[
    A_n =
  \begin{pmatrix}
        a_n^{(1)} &  & \\
                   & \ddots& \\
        &  & a_n^{(d)}
    \end{pmatrix},  
    \quad
    B_n =
    \begin{pmatrix}
        b_n^{(1)} &  & \\
                   & \ddots& \\
        &  & b_n^{(d)}
    \end{pmatrix},
\]
where $a^{(i)} = \left( a_n^{(i)} \right)_{n \in \NN_0}, \ \  b^{(i)} = \left( b_n^{(i)} \right)_{n \in \NN_0}$ are Jacobi parameters of  $d$ scalar Jacobi matrices $\calJ^{(i)}$ for $i=1,\ldots d$. 
Let $J^{(i)}$ be the appropriate  (maximal) Jacobi operators, and assume that they are all s.a.. Define the 
unitary transformation $U: \ldnz{\CC^d} \strz \bigoplus_{j=1}^d\ldnz{\CC}$ \ \  by
\begin{equation} \label{eq:134-d}
    (Uu)_i:=
    \left((u_n)_i\right)_{n\in\Nz}, \qquad  i=1,\ldots d,
\end{equation}
for \  $u=\left(u_n\right)_{n\in\Nz}\in\ldnz{\CC^d}$.  

Using the maximality of all  the $d+1$ operators $J$, $J^{(1)}, \ldots J^{(d)}$ one can easily check that
\begin{equation} \label{eq:135'}
    J = U^{-1}\left(\bigoplus_{j=1}^d J^{(s)} \right)U. 
\end{equation}

Hence, in particular, also $J$ is s.a.  Moreover,  by the general properties of projection-valued spectral measures  for s.a. operators (for direct sums of operators and for the unitary transfer of operator, 
in both cases by the uniqueness of the projection-valued spectral measure) we have
\begin{equation} \label{E-J-d}
E_J(\om) = U^{-1}\left(\bigoplus_{j=1}^dE_{J^{(j)}}(\om) \right) U, \quad \om\in\Bor(\RR).
\end{equation}
The above formulae allow us to express the spectral matrix measure $M$
 of our block Jacobi operator $J$ 
directly by the $d$ ''scalar'' spectral measures  $\mu^{(i)}$ for  $J^{(i)}$, $i=1,\ldots d$. Recall that 
\begin{equation} \label{muj}
 \mu^{(i)}(\om) :=  \left\langle E_{J^{(i)}}(\om) \delta_0(1), \delta_0(1)
\right\rangle_{\ldnz{\CC}} , \quad \om\in\Bor(\RR), \ \  i=1,\ldots d
\end{equation}
and 
that $\vec{\varphi}$ used in \eqref{eq:IV:2.1} is the canonical cyclic system for $J$ in $\ldnz{\CC^d}$ given by \ $\varphi_i:=\delta_0(e_i)$ \ (see subsection \ref{fincyc}).  
Therefore,  by the unitarity  of $U$ and  by \eqref{E-J-d}  
\begin{align*}
    (M(\om))_{ij} 
    &=
    \left\langle 
    E_J(\om) \delta_0(e_j),
    \ \delta_0(e_i) 
    \right\rangle_{\ldnz{\CC^d}} \\
    &=
    \left\langle 
\left(\bigoplus_{s=1}^dE_{J^{(s)}}(\om)\right) 
 U\delta_0(e_j),
\  U \delta_0(e_i) 
\right\rangle_{
\bigoplus_{s=1}^d\ldnz{\CC}
}\\
&= \sum_{s=1}^d
    \left\langle  
    E_{J^{(s)}}(\om)
 \left(U\delta_0(e_j)\right)_s,
\  \left(U \delta_0(e_i)\right)_s
\right\rangle_{\ldnz{\CC}}
\end{align*}
 for any $\om\in\Bor(\RR)$ and $i,j=1,\ldots d$. But by \eqref{eq:134-d} 
$$
\left(U\delta_0(e_j)\right)_s=\opcje{\delta_0(1)}{s=j}{0}{s\not=j\,,}\qquad s=1,\ldots d,
$$
so
$$
 (M(\om))_{ij} 
    =\opcje{
    \left\langle 
    E_{J^{(i)}}(\om) \delta_0(1),
    \ \delta_0(1) 
    \right\rangle_{\ldnz{\CC}}
    }{i=j}{0}{i\not=j,}
$$
 and finally, by \eqref{muj}, we get the diagonal form of our  spectral matrix measure
\begin{equation} \label{eq:142'}
    M(\om) =
\begin{pmatrix}
        \mu^{(1)}(\om) &  & \\
                   & \ddots& \\
        &  & \mu^{(d)}(\om)
    \end{pmatrix},\quad \om\in\Bor(\RR).
\end{equation}

\end{example}

\section{The associated difference equations and transfer matrices} \label{sec:3s}

\subsection{The two associated difference equations and ``the solution extensions to -1''}
\label{sec:twoeq}

For any $z \in \CC$ let us consider  two 
difference equations tightly related to $\calJ$. The first --- ``the vector'' one --- is the infinite system of equations for a sequence $u=(u_n)_{n\in\Nz}\in\lnz{\CC^d}$:

\begin{equation} \label{eq:2}
    (\calJ u)_n = z u_n, \qquad n \geq 1.
\end{equation}
Each such a vector sequence $u$ is  called {\em generalized eigenvector (for $J$ and $z$)}\footnote{Note here, that it is ``generalized'' for two reasons; the first, because $u$ may not belong to $\Dom(J)$,  not even $\ldnz{\CC^d}$, and the second, since we do not require the equality  for $n=0$ above.} ---  {\em ``gev''} for short. By \eqref{eq:defformal} equivalently its  explicit form can be  written
\beql{eq:2'}
    A_{n-1}^* u_{n-1} + B_n u_n + A_n u_{n+1} = z u_n, \qquad n \geq 1.
\eeq
The second difference equation --- ``the matricial'' one --- is the analog equation (with the right-side multiplication choice\footnote{The left-side one is also possible and used for several reasons, but we shall not consider it here.})  for a matrix sequence  
$U = (U_n)_{n \in \Nz} \in \lnz{\Mdc}$:
\begin{equation} \label{eq:1}
    A_{n-1}^* U_{n-1} + B_n U_n + A_n U_{n+1} = z U_n, \qquad n \geq 1,
\end{equation}
and each such a matrix sequence $U$ is  called {\em matrix generalized eigenvector (for $J$ and $z$)} ---  {\em ``mgev''} for short.

Having our BJM $\calJ$ fixed, for $z\in\CC$ we denote
\[
    \GEVze{z} := \{u\in\lnz{\CC^d}:\, \mbox{$u$ is  a gev for $J$ and $z$}\}
\]
and parallelly
\[
    \MGEVze{z} := \{ U \in \lnz{\Mdc}:\, \mbox{$U$ is  a  mgev for $J$ and $z$} \},
\]
being obviously liner subspaces of  $\lnz{\CC^d}$ and $\lnz{\Mdc}$, respectively. 
By \eqref{zal-odw+sym}, both recurrence relations are of  degree 2, in the sense that for any {initial condition} $\big( C_0, C_1 \big)\in(\Mdc)^2$ there is a unique sequence $U$ satisfying~\eqref{eq:1} with $U_0=C_0$, $U_1=C_1$, and analogously for \eqref{eq:2'}. More precisely, one can easily check the following.
\begin{fact}\label{fact:ini-iso}
For any   $z \in \CC$ the map \ \ 
$\Ini_{z;0,1}: \GEVze{z} \strz \left( \CC^d \right)^2$, \ \  given by  
\[
    \Ini_{z;0,1}(u) = (u_0, u_1), \qquad u \in \GEVze{z},
\]
is a linear isomorphism. The analogous result is true for $\MGEVze{z}$ and $\left( \Mdc \right)^2$. In particular \ 
$\dim \GEVze{z} = 2d$ \ and \ $\dim \MGEVze{z} = 2d^2$.
\end{fact}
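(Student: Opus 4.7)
The plan is to prove both assertions simultaneously by exploiting the invertibility assumption on the $A_n$ from \eqref{zal-odw+sym}, which turns the second-order recurrence \eqref{eq:2'} into a two-step explicit forward rule. Linearity of $\Ini_{z;0,1}$ is immediate from the coordinate-wise definition, so only bijectivity needs to be addressed.

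First I would rewrite \eqref{eq:2'} solved for the highest index: since $A_n$ is invertible for every $n \in \Nz$, for $n \geq 1$ the relation is equivalent to
\[
    u_{n+1} = A_n^{-1}\bigl((zI - B_n) u_n - A_{n-1}^* u_{n-1}\bigr).
\]
For injectivity, suppose $u \in \GEVze{z}$ with $\Ini_{z;0,1}(u) = (0,0)$. Then $u_0 = u_1 = 0$, and the displayed forward rule gives $u_{n+1} = 0$ whenever $u_{n-1} = u_n = 0$, so by induction $u_n = 0$ for all $n \in \Nz$. Hence $\Ker(\Ini_{z;0,1}) = \{0\}$. For surjectivity, given any $(v_0, v_1) \in (\CC^d)^2$, set $u_0 := v_0$, $u_1 := v_1$, and recursively define $u_{n+1}$ by the forward rule for $n \geq 1$. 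This yields a sequence $u \in \lnz{\CC^d}$ satisfying \eqref{eq:2'} for all $n \geq 1$, i.e.\ $u \in \GEVze{z}$, and by construction $\Ini_{z;0,1}(u) = (v_0, v_1)$.

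The matrix case proceeds verbatim with $\CC^d$ replaced by $\Mdc$: the forward rule
\[
    U_{n+1} = A_n^{-1}\bigl((zI - B_n) U_n - A_{n-1}^* U_{n-1}\bigr)
\]
still makes sense because the invertibility of $A_n$ does not use that the unknowns are vectors rather than matrices, and the same induction argument gives injectivity and surjectivity of the analogously defined $\Ini_{z;0,1}^{\mathrm{mat}} : \MGEVze{z} \to (\Mdc)^2$. The dimension formulae follow immediately from $\dim (\CC^d)^2 = 2d$ and $\dim (\Mdc)^2 = 2d^2$.

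There is no real obstacle here; this is a routine consequence of the invertibility hypothesis \eqref{zal-odw+sym}, which is precisely what prevents degeneracy of the recursion order. The only point that deserves a moment's care is noting that invertibility of $A_n$ (as opposed to merely non-vanishing) is what allows us to propagate both forward the vector-valued solutions and, with exactly the same argument, the matrix-valued ones.
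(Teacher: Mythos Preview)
Your proof is correct and follows exactly the approach the paper indicates: the paper does not write out a formal proof but simply remarks, just before stating the fact, that by \eqref{zal-odw+sym} the recurrences are of degree~2 so that initial data $(u_0,u_1)$ (resp.\ $(U_0,U_1)$) determine a unique solution, and then says ``one can easily check the following.'' Your explicit forward rule and induction are precisely the details being alluded to.
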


However, it is often more convenient to use another kind of ``initial conditions'', namely ``at $-1$ and $0$'' instead of $0$ and $1$. To formulate this properly,  we shall define first the appropriate extension of each solution (in both, vector and matrix cases), which is tightly related to the  ``a priori choice'' of $A_{-1}$:
\beql{A-1}
    A_{-1} := -\Id.
\eeq
The informal idea of the extension  is simply to    ``extend to $n=0$''  the system \eqref{eq:1} (and  \eqref{eq:2'} 
analogously) and to  ``compute the ``value at $-1$'', using our choice made in \eqref{A-1}, 
i.e., we get \  ``$U_{-1} := (B_0 - z\Id) U_0 + A_0 U_1$''
for the matrix case. Unfortunately, as one can see, such a definition seems to depend explicitly on the parameter $z$, and not only on $U$. So,  at the first sight it seems that  the notation for the extension ``to $-1$'' of a solution $U$ has to contain always this parameter, which would be  not very convenient. And  it is true, that we  really have this problem,  extending in such a way {\bf any} sequence $U \in \lnz{\Mdc}$ (similarly for the vector version). So we define first the following family
$\{\extlz{z}{} \}_{z \in \CC}$
of  ``extending transformations'' $\extlz{z}{}: \lnk{0}{\Mdc} \strz \lnk{-1}{\Mdc}$ given for $U \in \lnz{\Mdc}$   and $z \in \CC$ simply by
\beql{exttra-z}
    (\extlz{z}{U})_n := \opcje{(B_0 - z\Id) U_0 + A_0 U_1}{n=-1}{U_n}{n \in \Nz.}
\eeq
We shall use here the same notation for the vector sequences without any risk of confusion,  i.e., we shall also write $\extlz{z}{u}$ for $u \in \lnz{\CC^d}$   and $z \in \CC$ with the analogous meaning
\beql{exttra-z'}
    (\extlz{z}{u})_n := \opcje{(B_0 - z\Id) u_0 + A_0 u_1}{n=-1}{u_n}{n \in \Nz.}
\eeq
Therefore, for any $z\in\CC$ both kinds of transformations are  linear, and  moreover:
\beql{RHSconserv}
    \extlz{z}{(UC)} = (\extlz{z}{U}) C, \quad\mbox{for any $U \in \lnz{\Mdc}$, \ $C \in \Mdc$.}
\eeq

Fortunately, the situation is much simpler if  restricted 
to the subspace of all the (M)GEV-s. 
Namely, we have:
\begin{fact}\label{jed-ext}
If $z, w \in \CC$, $z \neq w$, then\footnote{Below $0$ denotes the zero sequence both for the $\CC^d$-vector and for the  matrix case.}
\[
    \GEVze{z} \cap \GEVze{w} = \{ 0 \}, \quad
    \MGEVze{z} \cap \MGEVze{w} = \{ 0 \}.
\]
\end{fact}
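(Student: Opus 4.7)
The plan is to argue in both cases directly from the recurrence, exploiting the invertibility assumption $\det A_n \neq 0$ from \eqref{zal-odw+sym} and the fact that $z \neq w$. I will present the argument only for the vector case, since the matrix case is completely identical (the equations \eqref{eq:2'} and \eqref{eq:1} have the same structure and the coefficients act by left multiplication in both).

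First, I would take $u \in \GEVze{z} \cap \GEVze{w}$. Both recurrences
\[
    A_{n-1}^* u_{n-1} + B_n u_n + A_n u_{n+1} = z u_n, \qquad
    A_{n-1}^* u_{n-1} + B_n u_n + A_n u_{n+1} = w u_n
\]
hold for every $n \geq 1$. Subtracting one from the other gives $(z-w) u_n = 0$ for all $n \geq 1$, and since $z \neq w$, this forces $u_n = 0$ for every $n \geq 1$.

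Next I would insert $n=1$ into \eqref{eq:2'}: with $u_1 = u_2 = 0$ only the term $A_0^* u_0$ survives on the left-hand side, while the right-hand side is $z u_1 = 0$. Thus $A_0^* u_0 = 0$, and the invertibility of $A_0$ (hence of $A_0^*$) from \eqref{zal-odw+sym} yields $u_0 = 0$. This shows $u = 0$, proving $\GEVze{z} \cap \GEVze{w} = \{0\}$.

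For the matrix intersection, the identical chain of steps, with $u_n$ replaced by $U_n \in \Mdc$ and $(z-w)U_n = 0$ read as a scalar acting on a matrix, gives $U_n = 0$ for $n \geq 1$ and then $A_0^* U_0 = 0$, hence $U_0 = 0$. No step here looks like an obstacle; the only thing to be careful about is to subtract the two recurrences in the correct order and to remember that the relation is imposed for $n \geq 1$, which is precisely why one still needs the $n=1$ equation together with the invertibility of $A_0$ to kill the ``initial value'' $u_0$ (or $U_0$).
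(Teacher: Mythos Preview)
Your proof is correct and follows essentially the same route as the paper: subtract the two recurrences to kill all terms with index $n \geq 1$, then plug $n=1$ back into the recurrence and use the invertibility of $A_0$ to force $u_0 = 0$ (respectively $U_0 = 0$). The paper presents only the matrix case while you present the vector case, but the arguments are identical.
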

\begin{proof}
For the matrix case, consider $U$ satisfying  both \eqref{eq:1}, and its analog for $w$. Then, subtracting, we get 
\[
    0 = (z - w) U_n, \quad n \geq 1,
\]
hence $U_n = 0$ for any $n \in \N$, but now again  by \eqref{eq:1}  used only for $n=1$ we get also $U_0 = 0$, i.e, $U = 0$.
\end{proof}
This means, that for any non-zero vector or matrix solution of our equations, the parameter '$z$' is in fact ``coded in the solution''.  
On the other hand, independently of $z$, the value of the extension \ $\extlz{z}{}$ \ for  the zero sequence is obviously  also the zero sequence (on $\NN_{-1}$ already) by the linearity.  Hence denote
\[
    \GGEVze := \bigcup_{z\in\CC}\GEVze{z}, \quad 
    \MMGEVze := \bigcup_{z\in\CC}\MGEVze{z},  
\]
and, thanks to Fact~\ref{jed-ext}, 
for any $U \in \MMGEVze \setminus \{0\}$ \  ($u \in \GGEVze \setminus \{ 0 \}$) we can define $\Para(U)$ \  ($\Para(u)$) as the unique number $z \in \CC$ satisfying
\[
    U \in \MGEVze{z} \ \  (u \in \GEVze{z}).
\]
Finally, we can simplify our notation and omit the parameter '$z$', defining 
\[
    \extl{\hspace{ 0 ex}} : \MMGEVze \strz \lnk{-1}{\Mdc}
\]
given for $U \in \MMGEVze$   simply by  the formula
\beql{kropa}
    \extl{U} := \opcje{\extlz{\Para(U)}U}{U \neq 0}{0}{U = 0}
\eeq
and analogically for $u \in \GGEVze$.

Now, taking \eqref{A-1} into account, let us consider ``extensions''  of the systems  \eqref{eq:2'} and \eqref{eq:1}:
\beql{eq:2'-1}
    A_{n-1}^* u_{n-1} + B_n u_n + A_n u_{n+1} = z u_n, \qquad n \geq {\pmb 0}
\eeq
for   sequences $u=(u_n)_{n \in \Nk{-1}} \in \lnk{-1}{\CC^d}$ and 
\beql{eq:1-1}
    A_{n-1}^* U_{n-1} + B_n U_n + A_n U_{n+1} = z U_n, \qquad n \geq {\pmb 0}
\eeq
for   sequences $U=(U_n)_{n \in \Nk{-1}} \in \lnk{-1}{\Mdc}$. 
Their solutions will be called {\em extended
 generalized eigenvectors} and  
{\em extended 
 matrix generalized eigenvectors}, respectively, {\em (for $J$ and $z$)} 
  ---  {\em ``egev''} and  {\em ``emgev''} for short.

We denote also 
\[
    \GEV{z} := \big\{ u \in \lnk{-1}{\CC^d}:\, \mbox{$u$ is  an  egev for $J$ and $z$} \big\}
\]
and 
\[
    \MGEV{z} := \big\{ U \in \lnk{-1}{\Mdc}:\, \mbox{$U$ is  an emgev for $J$ and $z$} \big\}.
\]
Let us formulate explicitly some simple relations between all the above ``extended''  and  ``non-extended'' notions and the $\extl$ transformation.
\begin{fact} \label{fact:ext}
For any $z\in\CC$ the following assertions hold:
\begin{enumerate}[label=(\roman*)]
    \item \label{fact:ext:1}
    \  $\restr{\extl{}}{\GEVze{z}}\ = \  \restr{\extlz{z}{}}{\GEVze{z}}$,
    
    \item \label{fact:ext:2}
    \  $\restr{\extl{}}{\GEVze{z}} : \GEVze{z}\strz \GEV{z}$  \   is a linear isomorphism between $\GEVze{z}$ \  and $\GEV{z}$,
    
    \item \label{fact:ext:3}
    $\left( \restr{\extl{}}{\GEVze{z}} \right)^{-1} u =  \restr{u}{\N_0}$ \ \ for any $u \in \GEV{z}$, 

    \item \label{fact:ext:4}
    for any  $( c_{-1}, c_0 )\in(\CC^d)^2$ there is a unique $u \in \GEV{z}$ with $u_{-1} = c_{-1}$, $u_0 = c_0$, 
\end{enumerate}
and their obvious reformulations for the matrix sequences variants are also true. 
\end{fact}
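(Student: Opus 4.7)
The plan is to handle parts (i)--(iv) in order, relying throughout on the crucial observation that the choice $A_{-1} = -\Id$ in \eqref{A-1} is exactly what makes the ``extended'' equation \eqref{eq:2'-1} at $n=0$ equivalent to the definition of the $z$-extension in \eqref{exttra-z'}.

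For (i), I would split into the cases $u=0$ and $u\neq 0$. In the zero case, both $\extl{u}$ and $\extlz{z}{u}$ vanish on $\N_{-1}$ by the linearity of the formula in \eqref{exttra-z'} and by the convention in \eqref{kropa}. In the nonzero case, $u \in \GGEVze$ with $\Para(u)=z$ by Fact~\ref{jed-ext} (since $u\in \GEVze{z}$ uniquely pins down $z$), so \eqref{kropa} gives exactly $\extl{u}=\extlz{z}{u}$.

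For (ii), the key step is showing that $\extl{}$ maps $\GEVze{z}$ into $\GEV{z}$. Taking $u \in \GEVze{z}$ and using (i), I verify the extended recurrence \eqref{eq:2'-1} at $n=0$: substituting $A_{-1}^*=-\Id$ and $(\extlz{z}{u})_{-1}=(B_0-z\Id)u_0+A_0u_1$ gives
\[
A_{-1}^*(\extlz{z}{u})_{-1}+B_0u_0+A_0u_1 \;=\; -\big((B_0-z\Id)u_0+A_0u_1\big)+B_0u_0+A_0u_1 \;=\; zu_0,
\]
which is exactly \eqref{eq:2'-1} at $n=0$; for $n\geq 1$ it coincides with \eqref{eq:2'} for $u$. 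Linearity is clear. Injectivity and surjectivity both follow from looking at the candidate inverse $v\mapsto \restr{v}{\N_0}$: injectivity is immediate (if $\extl{u}=0$ then its restriction to $\N_0$, which is $u$, vanishes), and for surjectivity, given $v\in\GEV{z}$, set $u:=\restr{v}{\N_0}\in\GEVze{z}$; the equation \eqref{eq:2'-1} at $n=0$ for $v$ together with $A_{-1}^*=-\Id$ forces $v_{-1}=(B_0-z\Id)v_0+A_0v_1$, which is precisely the $n=-1$ entry prescribed by \eqref{exttra-z'}, so $\extl{u}=v$. Part (iii) then falls out as an immediate description of this inverse.

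For (iv), given $(c_{-1},c_0)$, I use the invertibility of $A_0$ from \eqref{zal-odw+sym} to define uniquely $c_1:=A_0^{-1}\big(c_{-1}-(B_0-z\Id)c_0\big)$. By Fact~\ref{fact:ini-iso} there is a unique $\tilde u\in\GEVze{z}$ with $\Ini_{z;0,1}(\tilde u)=(c_0,c_1)$, and then $u:=\extl{\tilde u}\in\GEV{z}$ satisfies $u_0=c_0$ and $u_{-1}=(B_0-z\Id)c_0+A_0c_1=c_{-1}$ by construction. Conversely, any $u'\in\GEV{z}$ with the same initial values at $-1$ and $0$ must satisfy the same formula for $u'_1$ (solve the $n=0$ equation of \eqref{eq:2'-1} for $u'_1$ using invertibility of $A_0$), so $\restr{u'}{\N_0}$ coincides with $\tilde u$ by Fact~\ref{fact:ini-iso}, hence $u'=u$ by (iii). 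The matrix-sequence versions of (i)--(iv) are proved in exactly the same way, with $(\CC^d)^2$ replaced by $(\Mdc)^2$ and \eqref{exttra-z'} replaced by \eqref{exttra-z}; the right-multiplication compatibility \eqref{RHSconserv} plays no essential role here. The only mildly subtle point of the whole argument is the consistency check at $n=0$, which is where the choice $A_{-1}=-\Id$ pays off.
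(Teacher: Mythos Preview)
Your proof is correct and follows essentially the same approach as the paper's own argument, just spelled out in considerably more detail. The paper's proof is very terse: it notes that (i) is ``in fact the definition of $\extl{}$'', verifies (ii)--(iii) by checking that restriction to $\N_0$ is a two-sided inverse (using \eqref{eq:2'-1} at $n=0$ for the nontrivial direction), and dismisses (iv) as following ``directly from \eqref{eq:2'-1} and \eqref{zal-odw+sym}'', whereas you route (iv) explicitly through Fact~\ref{fact:ini-iso} and the already-established bijection---a slightly more roundabout but perfectly valid path to the same conclusion.
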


\begin{proof}
Let's check, e.g., the $\CC^d$ vector version. Observe that \ref{fact:ext:1} is in fact the definition of $\extl{}$. Using this and taking $u \in \GEVze{z}$,  $v \in \GEV{z}$,   we get obviously \ 
$
    \restr{(\extl{u})}{\N_0} = u
$ 
by  \eqref{exttra-z'}, but we get also 
$
    \extl{(\restr{v}{\N_0})} = v
$,
because $v$ satisfies \eqref{eq:2'-1} --- in particular for $n = 0$. Linearity is clear by the definition, so \ref{fact:ext:2} and \ref{fact:ext:3} hold. Part~\ref{fact:ext:4} follows directly from \eqref{eq:2'-1} and \eqref{zal-odw+sym}. 
\end{proof}

It can be easily  checked that (e)mgev-s and (e)gev-s are mutually related  in the following simple ways.

\begin{fact}\label{factsolve-ma-ve}
If $U \in \lnmj{\Mdc}$, $z \in \CC$,  then 
\begin{enumerate}[label=(\roman*)]
    \item $U$ is an emgev for $J$ and $z$  iff \ for any $j=1,\ldots, d$ the vector sequence $U^{\{j\}} := \big( U_n^{\{j\}} \big)_{n \in \Nmj} \in \lnmj{\CC^d}$ \ is an egev for $J$ and $z$; 
    
    \item If $U$ is an emgev for $J$ and $z$ then  for any $v \in \CC^d$  the vector sequence  $U v:=(U_n v)_{n \in \Nmj} \in \lnmj{\CC^d}$ \ is an egev for $J$ and $z$. 
\end{enumerate}
  The analog result holds for  sequences $U \in \lnz{\Mdc}$ and mgev-s and gev-s.
\end{fact}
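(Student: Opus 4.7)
The plan is to reduce the matrix equation \eqref{eq:1-1} to its column form and exploit linearity of right multiplication. The key observation is that for any matrix $C \in \Mdc$ and any $v \in \CC^d$ (in particular $v = e_j$), one has $(Cv)$ equal to the vector obtained by applying $C$ as a linear operator; in matrix form $C e_j = \col{C}{j}$. Hence if we multiply both sides of
\[
A_{n-1}^* U_{n-1} + B_n U_n + A_n U_{n+1} = z U_n
\]
on the right by a fixed $v \in \CC^d$, we obtain
\[
A_{n-1}^* (U_{n-1} v) + B_n (U_n v) + A_n (U_{n+1} v) = z (U_n v),
\]
which is exactly equation \eqref{eq:2'-1} for the vector sequence $(U_n v)_{n \in \Nmj}$.

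First I would handle part (ii): taking $U$ to be an emgev and $v \in \CC^d$ arbitrary, the displayed right-multiplication by $v$ performed for each $n \geq 0$ yields directly that $Uv$ satisfies \eqref{eq:2'-1}, hence $Uv \in \GEV{z}$. Then I would derive the forward direction of (i) as a specialization: apply (ii) with $v = e_j$ to conclude that $U^{\{j\}} = U e_j$ is an egev for each $j = 1, \dots, d$.

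For the converse direction of (i), I would use the fact that the matrix equation \eqref{eq:1-1} is an equality of matrices in $\Mdc$, and two matrices agree iff their columns agree. If each column sequence $U^{\{j\}}$ satisfies \eqref{eq:2'-1}, then for every $n \geq 0$ and every $j$
\[
\col{A_{n-1}^* U_{n-1} + B_n U_n + A_n U_{n+1}}{j}
= A_{n-1}^* U^{\{j\}}_{n-1} + B_n U^{\{j\}}_n + A_n U^{\{j\}}_{n+1}
= z U^{\{j\}}_n = \col{zU_n}{j},
\]
so matching column by column gives back the matrix equation \eqref{eq:1-1}, i.e.\ $U \in \MGEV{z}$. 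The analogous statement for the non-extended case (sequences in $\lnz{\Mdc}$ satisfying \eqref{eq:1} and sequences in $\lnz{\CC^d}$ satisfying \eqref{eq:2'}) is proved identically, since in both cases the recurrence has the same algebraic form and only the range of indices $n$ differs.

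There is no real obstacle here; everything reduces to the distributivity of matrix right-multiplication with respect to the coefficient matrices $A_{n-1}^*, B_n, A_n$ acting on the left, and to the column-by-column characterization of matrix equalities. The only point worth stating explicitly in the write-up is that the conditions \eqref{zal-odw+sym} on the parameters play no role at this stage, so the equivalence is purely algebraic.
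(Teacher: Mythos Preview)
Your proof is correct and follows exactly the approach the paper has in mind; indeed, the paper omits the proof entirely, stating only that the fact ``can be easily checked,'' and your column-by-column reduction together with right-multiplication by $v$ is precisely the routine verification this phrase points to.
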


\subsection{Matrix orthogonal polynomials and Jitomirskaya--Last type semi-norms
}
\label{subs-orth-pol}

According to  Fact~\ref{fact:ext}\  \ref{fact:ext:4} for the matrix case, for any $z \in \CC$ choose
$Q(z), P(z) \in \MGEV{z}$  corresponding to
\begin{equation}
    \label{eq:22}
    \begin{cases}
        Q_{-1}(z) = \Id \\
        Q_{0}(z) = 0,
    \end{cases} \qquad
    \begin{cases}
        P_{-1}(z) = 0 \\
        P_{0}(z) = \Id,
    \end{cases}
\end{equation}
with the following general  notation:
for any  sequence $U(p) = ((U(p))_n)_{n \in \Nk{k}}$ depending on an extra ``function variable type parameter''  $p$:
\beql{nota-z-n}
    U_n(p) := (U(p))_n, \quad n \in \Nk{k}
\eeq
for any  $p$. The two  sequences of functions 
$Q, P$   are the so-called {\em  second} and {\em  first kind matrix orthogonal polynomials}\footnote{More precisely, this name and the orthogonality property belong to    the appropriate two sequences  $(Q_n)_{n \in \N}$, $(P_n)_{n \in \Nz}$ of matrix valued polynomial  functions $Q_n, P_n$ on $\RR$ or on  $\CC$  with the   values at each $z$ given by above defined $Q_n(z), P_n(z)$.}.

We can also use ``the conditions in $0,1$'' instead of those ``in $-1, 0$'':
\begin{equation}
    \label{eq:22'}
    \begin{cases}
        Q_{0}(z) = 0 \\
        Q_{1}(z) = A_0^{-1},
    \end{cases} \qquad
    \begin{cases}
        P_{0}(z) = \Id \\
        P_{1}(z) = A_0^{-1} (z \Id -B_0).
    \end{cases}
\end{equation}

These two  special solutions  $Q(z)$ and $P(z)$ play an  important algebraic role in the linear space of all matrix solutions $\MGEV{z}$.

\begin{fact}\label{factsolve-ma}
Let $z\in\CC$. 
\begin{enumerate}[label=(\roman*)]
    \item \label{factsolve-ma:1}
    If $U \in \MGEV{z}$, then  for any $V \in \Mdc$ also   $UV = (U_nV)_{n \in \Nz} \in \MGEV{z}$. 
 
    \item \label{factsolve-ma:2}
    Each  $U \in \MGEV{z}$ has the form 
    \begin{equation} \label{eq:4}
	    U = P(z) S + Q(z) T,
    \end{equation}
    for a  unique pair $(S,T)$ of matrices from $\Mdc$. This pair is given by
    \begin{equation}
        \label{eq:22a}
        \begin{cases}
            S := U_0 \\
            T := U_{-1} = (B_0 - z\Id) U_0 + A_0 U_1.
        \end{cases}
    \end{equation}
    
    \item \label{factsolve-ma:3}
    For any $S\in\Mdc$   the matrix sequence $H := \restr{(P(z)S)}{\Nz}$ satisfies ``the formal matrix  eigenequation for $\calJ$ and  $z$'', namely: $H \in \MGEVze{z}$ \   and 
    \begin{equation} \label{eq:1dla0}
        B_0 H_0 + A_0 H_{1} = z H_0.
    \end{equation}
    So, for any $v \in \CC^d \setminus \{ 0 \}$ the $\CC^d$-sequence $h := \restr{(P(z)v)}{\Nz}$ is an eigenvector of the formal operator $\calJ$ for $z$:
    \beql{foreign}
        \calJ h = z h, \quad h \neq 0.    
    \eeq
\end{enumerate}
\end{fact}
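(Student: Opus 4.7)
The plan is to prove the three parts in order, with part~\ref{factsolve-ma:1} supplying the key algebraic mechanism, part~\ref{factsolve-ma:2} following by an initial-value matching argument, and part~\ref{factsolve-ma:3} reducing to a direct calculation that exploits the normalisation of $P$.

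For~\ref{factsolve-ma:1}, I would simply right-multiply the extended recurrence~\eqref{eq:1-1} satisfied by $U$ by the constant matrix $V$: for each $n\ge 0$,
\[
A_{n-1}^{*}(U_{n-1}V) + B_n(U_n V) + A_n(U_{n+1}V) = \bigl(A_{n-1}^{*}U_{n-1} + B_n U_n + A_n U_{n+1}\bigr)V = z U_n V,
\]
so $UV\in\MGEV{z}$. For~\ref{factsolve-ma:2}, I would set $S:=U_0$ and $T:=U_{-1}$; by~\ref{factsolve-ma:1} applied to the defining solutions $P(z)$ and $Q(z)$, both $P(z)S$ and $Q(z)T$ lie in $\MGEV{z}$, and by linearity so does their sum. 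Evaluating at $n=-1$ and $n=0$ using~\eqref{eq:22} gives $(P(z)S + Q(z)T)_{-1} = T = U_{-1}$ and $(P(z)S + Q(z)T)_{0} = S = U_0$, and the matrix analogue of Fact~\ref{fact:ext}\ref{fact:ext:4} (an element of $\MGEV{z}$ is determined by its values at $-1$ and $0$) then forces $U = P(z)S + Q(z)T$. Uniqueness of $(S,T)$ is immediate from the same two evaluations, and the alternative expression $T = (B_0 - z\Id)U_0 + A_0 U_1$ is just the $n=0$ instance of~\eqref{eq:1-1} solved for $U_{-1}$ via $A_{-1}=-\Id$, which coincides with the definition~\eqref{exttra-z} of $(\extl{U})_{-1}$.

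For~\ref{factsolve-ma:3}, a further application of~\ref{factsolve-ma:1} gives $P(z)S\in\MGEV{z}$, so its restriction $H$ to $\Nz$ satisfies~\eqref{eq:1} for $n\ge 1$, i.e., $H\in\MGEVze{z}$. The missing equation at the boundary comes from writing the $n=0$ instance of~\eqref{eq:1-1} for $P(z)S$ and using $A_{-1}=-\Id$ together with $P_{-1}(z)=0$, which collapses to
\[
B_0 H_0 + A_0 H_1 = z H_0,
\]
i.e., exactly~\eqref{eq:1dla0}. For the eigenvector assertion, Fact~\ref{factsolve-ma-ve} (in its ``$\Nz$''-variant) applied to $H$ and any $v\in\CC^d\setminus\{0\}$ produces $h:=\restr{(P(z)v)}{\Nz}=Hv\in\GEVze{z}$, so $(\calJ h)_n = zh_n$ for all $n\ge 1$; right-multiplying the displayed $n=0$ identity by $v$ supplies also $(\calJ h)_0 = zh_0$, so $\calJ h = zh$, and $h_0 = P_0(z)v = v \ne 0$ ensures non-triviality. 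No step presents a real obstacle; the only point to watch is the choice of the $(-1,0)$ normalisation~\eqref{eq:22} over the $(0,1)$ one in~\eqref{eq:22'}, because the identity pattern at $(-1,0)$ is precisely what makes the decomposition formula align cleanly with the initial data of an arbitrary $U\in\MGEV{z}$.
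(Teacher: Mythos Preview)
Your proof is correct and follows essentially the same route as the paper: right-multiplication for~\ref{factsolve-ma:1}, initial-value matching at $n=-1,0$ together with the uniqueness in Fact~\ref{fact:ext}\ref{fact:ext:4} for~\ref{factsolve-ma:2}, and the $n=0$ instance of~\eqref{eq:1-1} combined with $P_{-1}(z)=0$ for~\ref{factsolve-ma:3}. One tiny notational point: in the eigenvector step you write $h=\restr{(P(z)v)}{\Nz}=Hv$, which only matches your earlier $H=\restr{(P(z)S)}{\Nz}$ when $S=\Id$; making that specialisation explicit (as the paper does) removes any ambiguity.
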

\begin{proof}
Part~\ref{factsolve-ma:1} is obvious. Hence, using it, by linearity, by  \eqref{eq:22} and by the unicity from  Fact~\ref{fact:ext} \ref{fact:ext:4}, we get  \eqref{eq:4}  with $S$ and $T$  given by \eqref{eq:22a}. --- I.e., we  can simply assume that some $S$ and $T$ are  given by \eqref{eq:22a} and then we see that the  initial conditions of the solution on the  RHS of \eqref{eq:4} are just the pair $(U_{-1},U_0)$, which proves  \eqref{eq:4} by the unicity. So, to finish~\ref{factsolve-ma:2} we should check that the choice of the   pair $(S,T)$ is also  unique.  By the linearity, it suffices to check only that if $P(z) S + Q(z) T$ is the zero solution, then $S = T = 0$. Indeed, in this case we have \  $0 = P_0(z) S + Q_0(z) T = S$ and $0 = P_{-1}(z) S + Q_{-1}(z) T = T$. Now, to get \ref{factsolve-ma:3}, we can first use \ref{factsolve-ma:1} with \ref{factsolve-ma:2} for  $U$ of the form  \eqref{eq:4} with $T = 0$, so, using also Fact~\ref{fact:ext} (the matrix version), we get 
$H \in \MGEVze{z}$ with 
\eqref{eq:1dla0} obtained  by \eqref{eq:22a} for $T = 0$. Now we obtain $\left( \calJ h \right)_n = z h_n$
by \eqref{eq:1} for $n \geq 1$ and separately for $n = 0$ from \eqref{eq:1dla0} with $S = \Id$. Finally $h \neq 0$, because by \eqref{eq:22} \  $h_0 = (P(z) v)_0 = P_0 v = v \neq 0$.
\end{proof}

Assume now that $J$ is s.a. 
We show here  a result being a block case analog of the appropriate   scalar case one result from \cite{Jitomirskaya1999}. 
It makes  use of the special choice of Jitomirskaya--Last type semi-norms (see subsection \ref{J-Lsemi}) and, similarly as in the case $d=1$,  it can be useful   
to ``control  the boundary limits of the matrix Weyl function'' 
(see Definition~\ref{maWe}).

\begin{proposition} \label{defJL}
Suppose that $J$ is self-adjoint. For any  $\la \in \RR$  there exists a unique function \ \ $\JLf{\la} : \RR_+ \to \RR_+$ \  satisfying 
\begin{equation} \label{eq:13}
	\norm{P(\la)}_{[0, \JLf{\la}(\epsilon)]} 
	\norm{Q(\la)}_{[0, \JLf{\la}(\epsilon)]} = \frac{1}{2 \epsilon},\quad \eps>0.
\end{equation}
 Moreover, $\JLf{\la}$ is a strictly decreasing continuous function and satisfies
\begin{equation} \label{eq:106a}
    \lim_{\epsilon \to 0^+} \JLf{\la}(\epsilon) = +\infty, \quad
    \lim_{\epsilon \to +\infty} \JLf{\la}(\epsilon) = 0.
\end{equation}
Consequently, its inverse $\JLf{\la}^{-1}$ is also strictly decreasing continuous function and satisfies
\begin{equation} \label{eq:106b}
    \lim_{t \to 0^+} \JLf{\la}^{-1}(t) = +\infty, \quad
    \lim_{t \to +\infty} \JLf{\la}^{-1}(t) = 0.
\end{equation}
\end{proposition}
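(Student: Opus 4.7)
Everything reduces to analyzing the single function
\[
    \Phi_\la(t) := \norm{P(\la)}_{[0,t]} \norm{Q(\la)}_{[0,t]}, \qquad t \in [0, \infty).
\]
The strategy is to show that $\Phi_\la$ is a continuous strictly increasing bijection from $[0, \infty)$ onto itself; then $\JLf{\la}(\eps) := \Phi_\la^{-1}(1/(2\eps))$ will be the unique function satisfying \eqref{eq:13}, and its monotonicity, continuity and the limits \eqref{eq:106a}--\eqref{eq:106b} will follow from those of $\Phi_\la^{-1}$ and of the map $\eps \mapsto 1/(2\eps)$. Continuity of $\Phi_\la$ on $[0, \infty)$ is immediate from \eqref{eq:105a}, and $\Phi_\la(0) = 0$ follows from $Q_0(\la) = 0$ in \eqref{eq:22}.

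For strict monotonicity, on each interval $[n, n+1]$ the square $\Phi_\la^2$ has the form $(a_1 + b_1 \tau)(a_2 + b_2 \tau)$ with $\tau = t - n$, where $a_i$ are the relevant partial sums at step $n$ and $b_1 = \norm{P_{n+1}(\la)}^2$, $b_2 = \norm{Q_{n+1}(\la)}^2$. Its derivative $a_1 b_2 + a_2 b_1 + 2 b_1 b_2 \tau$ is strictly positive on the interior once one knows that $a_1, a_2 > 0$ and that at least one of $b_1, b_2$ is nonzero. Positivity of the partial sums follows from $P_0(\la) = \Id$ together with $Q_1(\la) = A_0^{-1}$ (from \eqref{eq:22'}), the first interval $[0,1]$ being handled separately because there $a_2 = 0$ but $b_2 = \norm{A_0^{-1}}^2 > 0$. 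The non-simultaneous vanishing $P_{n+1}(\la) = Q_{n+1}(\la) = 0$ is excluded by a Wronskian-type observation: the $2d \times 2d$ block matrix built from two consecutive pairs of values of $P(\la)$ and $Q(\la)$ evolves by the transfer matrix of the recurrence, whose off-diagonal block $-A_n^{-1}A_{n-1}^*$ is invertible by \eqref{zal-odw+sym} and \eqref{A-1}; starting at $n=0$ from an anti-diagonal matrix (by \eqref{eq:22}), this block matrix remains invertible, so its lower row cannot vanish.

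The genuinely nontrivial step is divergence $\Phi_\la(t) \to +\infty$ as $t \to +\infty$. The $2d$ columns of $P(\la)$ and $Q(\la)$ are linearly independent extended generalized eigenvectors for $\la$, because the initial data~\eqref{eq:22} make their pairs of values at indices $-1, 0$ span $(\CC^d)^2$. If both $\norm{P(\la)}_{[0,t]}$ and $\norm{Q(\la)}_{[0,t]}$ stayed bounded, all $2d$ of these $\CC^d$-valued sequences would lie in $\ell^2$, contradicting the bound (at most $d$) on the dimension of the space of $\ell^2$ generalized eigenvectors of the self-adjoint $J$ supplied by Theorem~\ref{duzo-malo-wl2}. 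Hence at least one of the two norms diverges, while the other is bounded below by a positive constant for $t \geq 1$ (again via $P_0(\la) = \Id$ and $Q_1(\la) = A_0^{-1}$), so the product diverges.

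The main obstacle is precisely this divergence step, which is where self-adjointness enters essentially through Theorem~\ref{duzo-malo-wl2}; the remaining pieces are careful accounting with the piecewise-affine interpolations from subsection~\ref{J-Lsemi}.
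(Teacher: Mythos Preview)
Your overall strategy coincides with the paper's: study $\Phi_\la(t)=\norm{P(\la)}_{[0,t]}\norm{Q(\la)}_{[0,t]}$, show it is a continuous strictly increasing bijection of $\RR_+$, and set $\JLf{\la}(\eps)=\Phi_\la^{-1}(1/(2\eps))$. Your handling of continuity, of $\Phi_\la(0^+)=0$, and of strict monotonicity is correct; the paper argues the last point more tersely by noting that $P_{n+1}(\la)=Q_{n+1}(\la)=0$ would force the bottom block row of $R_{n+1}(\la)$ in \eqref{eq:84} to vanish, contradicting the invertibility established in \eqref{eq:36}, which is exactly your Wronskian observation.

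There is, however, a genuine gap in your divergence step. Theorem~\ref{duzo-malo-wl2} yields $\dim\GEVld{\la}\le d$ \emph{only under the hypothesis} $\la\notin\si_p(J)$, and that hypothesis fails in precisely the situation you want to exclude: if $P(\la)\in\ldnmj{\Mdc}$ then, by Fact~\ref{factsolve-ma}\ref{factsolve-ma:3}, every nonzero $\restr{(P(\la)v)}{\Nz}$ is an eigenvector of $J$, so $\la\in\si_p(J)$. Hence Theorem~\ref{duzo-malo-wl2} as stated cannot be invoked. The paper avoids this by citing \cite[Theorem~1.3]{Dyukarev2020}, which asserts directly that if both $P(\la)$ and $Q(\la)$ lie in $\ell^2$ for some $\la\in\RR$, then $J$ fails to be self-adjoint.

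If you want to stay within the paper's own results, you can repair the argument by going inside the proof of Theorem~\ref{duzo-malo-wl2} rather than citing its statement. Assuming $\dim\GEVld{\la}=2d$, the rank--nullity identity there forces simultaneously $\dim\EV{\la}=d$ and $\Ran\Psi=\CC^d$; the latter means $\de_0(\nu)\in\Ran(J-\la\Id)$ for every $\nu\in\CC^d$. Since $J$ is self-adjoint, $\Ran(J-\la\Id)\subset\EV{\la}^\perp$, so $\langle\nu,h_0\rangle_{\CC^d}=\langle\de_0(\nu),h\rangle_{\el^2}=0$ for all $\nu$ and all $h\in\EV{\la}$, giving $h_0=0$ and hence $h=0$. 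This contradicts $\dim\EV{\la}=d$ and completes the divergence step.
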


\begin{proof}
Define a function $f : \RR_+ \to \RR_+$ by the formula
\[
    f(t) = 
    \norm{P(\la)}_{[0,t]} 
    \norm{Q(\la)}_{[0,t]}.
\]
By \eqref{eq:105a} this function is continuous, non-negative and weakly increasing. Moreover, by \eqref{eq:22} it is positive  in fact. Let us observe that it is strictly increasing. Indeed, because if not, then there would exist $n \in \Nz$ such that both $\norm{P(\la)}_{[0,t]}$ and $\norm{Q(\la)}_{[0,t]}$ were constant for $t \in (n, n+1)$. It would mean that $\norm{P_{n+1}(\la)} = \norm{Q_{n+1}(\la)} = 0$. Which by \eqref{eq:84} would imply that $R_n(\la)$ was singular. This contradicts \eqref{eq:36}. Next, observe that 
\[
    \lim_{t \to 0^+} f(t) = 0, \quad
    \lim_{t \to +\infty} f(t) = +\infty.
\]
Indeed, the first limit follows from $\norm{Q_0(\la)} = 0$. The second follows from the fact that if $\norm{P(\la)}_{[0,+\infty]} < +\infty$ and $\norm{Q(\la)}_{[0,+\infty]} < +\infty$, then the operator $J$ is not self-adjoint, see \cite[Theorem 1.3]{Dyukarev2020}. Thus, we have shown that $f$ is continuous, strictly increasing and surjective. So its inverse $f^{-1} : \RR_+ \to \RR_+$ has the same properties. Consider the function $g : \RR_+ \to \RR_+$ defined by $g(\epsilon) = 1/(2\epsilon)$. This function is strictly decreasing and surjective. Thus, if $\JLf{\la}$ function exists it is a solution of the equation
\[
    f \big( \JLf{\la}(\epsilon) \big) = g(\epsilon).
\]
This equation has a unique solution given by
\[
    \JLf{\la}(\epsilon) = f^{-1} \big( g(\epsilon) \big).
\]
It is immediate that this defines a function satisfying \eqref{eq:13}. From this representation it is immediate that $\JLf{\la} : \RR_+ \to \RR_+$ is a continuous strictly decreasing surjective function. It implies \eqref{eq:106a}. Since again $\JLf{\la}^{-1}$ has analogous properties, we also obtain \eqref{eq:106b}.
\end{proof}

The unique function $\JLf{\la}$ described above  will be called \emph{J-L function (for $J$ and $\la$)}. Note that $\JLf{\la}$ and Proposition \ref{defJL} were essentially used to prove the main result of our paper \cite{BarrierNonsubordinacy}.

\subsection{Transfer matrices and the  Liouville--Ostrogradsky formulae}
\label{subs-trans}
In the scalar case $d=1$ 
the transfer matrix sequences turned out to be a very useful tool for describing  spectral properties of the operator $J$. As we shall see, this is  the case of general dimension $d$.

Let us fix here $z \in \CC$.  Our  basic difference equations:  the  generalized eigenequation \eqref{eq:2},  its matrix analog \eqref{eq:1},  as well as their extended variants, can be  written in  equivalent forms with the use of the so-called (one step) \emph{transfer matrices (for $J$ and $z$)}. The $n$th transfer matrix  $T_n(z) \in \Mddc$ has  the block form, with blocks in $\Mdc$: 
\begin{equation}
    \label{eq:21}
    T_n(z):=
    \begin{pmatrix}
        0 & \Id \\
        -A_{n}^{-1} A_{n-1}^* & A_n^{-1} (z \Id - B_n)
    \end{pmatrix},  \qquad n \geq 0 
\end{equation}
(for $n = 0$ recall that  $A_{-1} = -\Id$ by \eqref{A-1}).  Hence, obviously, \eqref{eq:2'} \  (\eqref{eq:2'-1}) is equivalent to 
\begin{equation}
    \label{eq:20'}
    \begin{pmatrix}
        u_n \\
        u_{n+1}
    \end{pmatrix} 
    =
    T_n(z) 
    \begin{pmatrix}
        u_{n-1} \\
        u_n
    \end{pmatrix}, \qquad n \geq 1  \ (\geq 0).
\end{equation} 
Similarly,  \eqref{eq:1} (\eqref{eq:1-1}) is equivalent to
\begin{equation}
    \label{eq:20}
    \begin{pmatrix}
        U_n \\
        U_{n+1}
    \end{pmatrix} 
    =
    T_n(z) 
    \begin{pmatrix}
        U_{n-1} \\
        U_n
    \end{pmatrix}, \qquad n \geq 1 \ (\geq 0).
\end{equation}
Let us observe that  $T_n(z)$ is invertible and 
\begin{equation} \label{eq:35}
    \big( T_n(z) \big)^{-1}=
    \begin{pmatrix}
        \big( A_{n-1}^* \big)^{-1} (z \Id - B_n) & -\big( A_{n-1}^* \big)^{-1} A_n \\
        \Id & 0
    \end{pmatrix}, \qquad n \geq 0,
\end{equation}
which is clear by  direct multiplying (or by expressing $u_{n-1}$ by $u_n$ and  $u_{n-1}$ from  \eqref{eq:2'-1}). 

Moreover we define {\em the $n$-step transfer matrix} by 
\begin{equation} \label{eq:55}
    R_n(z) = T_{n-1}(z)\ldots T_0(z),  \qquad n\geq 1.
\end{equation}
This name is justified, e.g., by the property
\begin{equation} \label{eq:56}
    \begin{pmatrix}
        U_{n-1} \\
        U_{n}
    \end{pmatrix}
    =
    R_n(z) 
    \begin{pmatrix}
        U_{-1} \\
        U_0
    \end{pmatrix},
    \quad n \geq 1, 
\end{equation}
which we obtain  from \eqref{eq:20}. Hence, by   \eqref{eq:56} and \eqref{eq:22} we get
\begin{equation} \label{eq:84}
    R_n(z) =
    \begin{pmatrix}
        Q_{n-1}(z) & P_{n-1}(z) \\
        Q_n(z) & P_n(z)
    \end{pmatrix}, \qquad n \geq 1,
\end{equation}
being simply a direct consequence of  $R_n(z)=R_n(z)\begin{pmatrix}
        \Id & 0 \\
        0 & \Id
    \end{pmatrix}$.

Presently we shall derive a formula for the inverse of $R_n(z)$ expressing it explicitly in terms of $R_n(\overline{z})$.  
Let us set
\begin{equation} \label{eq:33}
    K_n :=
    \begin{pmatrix}
        A_n^* & 0 \\
        0 & \Id
    \end{pmatrix},  \qquad n \geq -1
\end{equation}
and
\begin{equation} \label{eq:32}
    \tilde{T}_n(z) := K_n T_n(z) K_{n-1}^{-1}, \qquad n \geq 0.
\end{equation}
So by \eqref{eq:21}
\[
    \tilde{T}_n(z) =
    \begin{pmatrix}
        0 & A_n^* \\
        -A_{n}^{-1} & A_n^{-1} (z \Id - B_n)
    \end{pmatrix}, \qquad n \geq 0 
\]
Therefore, defining 
\[
    \Omega :=
    \begin{pmatrix}
        0 & \Id \\
        -\Id & 0
    \end{pmatrix},
\]
we verify by direct computations that 
\begin{equation} \label{eq:31} 
    \Omega = \big( \tilde{T}_n(\overline{z}) \big)^* \Omega \tilde{T}_n(z), \quad n \geq 0.
\end{equation}
By using \eqref{eq:32} we get
\begin{equation} \label{eq:34}
    R_n(z) = 
    (K_{n-1}^{-1} \tilde{T}_{n-1}(z) K_{n-2}) (K_{n-2}^{-1} \tilde{T}_{n-2}(z) K_{n-3}) \ldots (K_0^{-1} \tilde{T}_0(z) K_{-1})
    =
    K_{n-1}^{-1} \tilde{R}_n(z) K_{-1},
\end{equation}
where 
\begin{equation} \label{eq:31a}
    \tilde{R}_n(z) := \tilde{T}_{n-1}(z) \tilde{T}_{n-2}(z) \ldots \tilde{T}_0(z), \quad n \geq 1.
\end{equation}
We claim that
\begin{equation} \label{eq:31b}
    \Omega = \big( \tilde{R}_n(\overline{z}) \big)^* \Omega \tilde{R}_n(z), \quad n \geq 1.
\end{equation}
We shall prove it inductively. By \eqref{eq:31a} we have $\tilde{R}_1(z) = \tilde{T}_0(z)$ for any $z \in \CC$. Thus, in view of \eqref{eq:31} the formula \eqref{eq:31b} holds true for $n=1$. Next, if \eqref{eq:31b} holds for some $n \geq 1$, then by \eqref{eq:31} we have
\begin{align*}
    \Omega 
    &= 
    \big( \tilde{R}_n(\overline{z}) \big)^* \Omega \tilde{R}_n(z) \\
    &=
    \big( \tilde{R}_n(\overline{z}) \big)^* 
    \Big( \big( \tilde{T}_n(\overline{z}) \big)^* \Omega \tilde{T}_n(z) \Big)
    \tilde{R}_n(z) \\
    &=
    \big( \tilde{R}_{n+1}(\overline{z}) \big)^* \Omega \tilde{R}_{n+1}(z),
\end{align*}
where in the last equality we have used \eqref{eq:31a}. It ends the inductive step in the proof of \eqref{eq:31b}. Thus, by multiplying both sides of \eqref{eq:31b} by $\Omega^{-1}$ on the left and then by $\big(\tilde{R}_n(z) \big)^{-1}$ on the right we arrive at
\[
    \big( \tilde{R}_n(z) \big)^{-1} = \Omega^{-1} \big( \tilde{R}_n(\overline{z}) \big)^* \Omega.
\]

Consequently, using \eqref{eq:34} twice   we can derive
\[
    \big( R_n(z) \big)^{-1} = 
    K_{-1}^{-1}\Omega^{-1} \big( K_{-1}^{-1} \big)^*
    \big( R_n(\overline{z}) \big)^* K_{n-1}^* \Omega K_{n-1},
\]
so, by \eqref{eq:33}, finally
\begin{equation} \label{eq:36}
    \big( R_n(z) \big)^{-1} =
    \begin{pmatrix}
        0 & \Id \\
        -\Id & 0
    \end{pmatrix}
    \big( R_n(\overline{z}) \big)^*
    \begin{pmatrix}
        0 & A_{n-1} \\
        -A_{n-1}^* & 0
    \end{pmatrix}, 
    \quad z \in \CC.
\end{equation}
Note that \eqref{eq:36} has been obtained by using the same argument  in \cite[formula (5)]{Janas2014} for real  $z$ and with real  $A_n$-s and $B_n$-s.

The following result is well-known, see e.g. \cite[Theorem 5.2]{Berg2008} and \cite[Lemma 2.4]{Oliveira2021a}, however our proof seems to be new and, even taking the above preparatory calculations into account, it is relatively short.

\begin{theorem}[Liouville--Ostrogradsky]\label{LO}
For any $w \in \CC$ one has
\begin{align}
    \label{eq:LO1}
    &Q_k(w) \big( P_k(\overline{w}) \big)^* = P_k(w) \big( Q_k(\overline{w}) \big)^*, \quad k \geq 0 \\
    \label{eq:LO2}
    &Q_{k}(w) \big( P_{k-1}(\overline{w}) \big)^* - P_{k}(w) \big( Q_{k-1}(\overline{w}) \big)^* = A_{k-1}^{-1}, \quad k \geq 1.
\end{align}
\end{theorem}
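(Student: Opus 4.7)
The plan is to derive both identities simultaneously by comparing blocks in the trivial identity $R_n(w)(R_n(w))^{-1} = I_{2d}$, using the already-proved formula \eqref{eq:36} for the inverse and the block form \eqref{eq:84} of $R_n(w)$. Essentially all of the work has been done in the symplectic-type computation that produced \eqref{eq:36}; what remains is bookkeeping.

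First, I would substitute \eqref{eq:84} (for $R_n(\sprz{w})$) into \eqref{eq:36} to get an explicit block expression for $(R_n(w))^{-1}$. A short computation (two block-matrix multiplications by $\Omega$ on the left and by the anti-diagonal matrix built from $A_{n-1}, A_{n-1}^*$ on the right) yields
\begin{equation*}
    \big(R_n(w)\big)^{-1}
    =
    \begin{pmatrix}
        -\big(P_n(\sprz{w})\big)^* A_{n-1}^* & \big(P_{n-1}(\sprz{w})\big)^* A_{n-1} \\[0.3em]
        \big(Q_n(\sprz{w})\big)^* A_{n-1}^* & -\big(Q_{n-1}(\sprz{w})\big)^* A_{n-1}
    \end{pmatrix}, \qquad n \geq 1.
\end{equation*}

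Second, I would multiply $R_n(w)$ (in the form \eqref{eq:84}) by this expression and compare the result with $I_{2d}$ block by block. The $(1,2)$-block of the product equals
\begin{equation*}
    \bigl[ Q_{n-1}(w)\big(P_{n-1}(\sprz{w})\big)^* - P_{n-1}(w)\big(Q_{n-1}(\sprz{w})\big)^* \bigr] A_{n-1},
\end{equation*}
which must vanish; since $A_{n-1}$ is invertible, this gives \eqref{eq:LO1} for $k = n-1 \geq 0$ (the remaining case $k=0$ is trivial from \eqref{eq:22}, since $Q_0(w)=0$ and $P_0(w)=\Id$). The $(1,1)$-block equals
\begin{equation*}
    \bigl[ P_{n-1}(w)\big(Q_n(\sprz{w})\big)^* - Q_{n-1}(w)\big(P_n(\sprz{w})\big)^* \bigr] A_{n-1}^*,
\end{equation*}
which must equal $\Id$. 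Taking adjoints and then relabeling $w \leftrightarrow \sprz{w}$ (permissible since the identity must hold for all $w \in \CC$), one obtains exactly \eqref{eq:LO2} with $k = n \geq 1$.

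The main obstacle is purely notational: making sure the $\Omega$-conjugation from \eqref{eq:36}, the block multiplication, and the final adjoint/conjugation step are all tracked correctly, so that the adjoints fall on the correct factor and the parameter $w$ appears on the correct side. The two ``off-diagonal'' blocks of the product $R_n(w)(R_n(w))^{-1}$ would in principle give further identities, but by symmetry they are adjoints of the ones above and therefore carry no new information.
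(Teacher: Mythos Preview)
Your proposal is correct and is essentially the same argument as the paper's: both exploit $R_n(w)(R_n(w))^{-1}=\Id_{2d}$ together with \eqref{eq:84} and \eqref{eq:36}, and read off the two identities from a pair of block entries. The only difference is cosmetic: the paper absorbs the $\Omega$ factor into $R_n(w)$ on the left and then reads off the \emph{second} row of the product, which yields \eqref{eq:LO1} for $k=n$ and \eqref{eq:LO2} for $k=n$ directly, without the adjoint-and-relabel step you needed from the $(1,1)$ block.
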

\begin{proof}
By \eqref{eq:36} and \eqref{eq:84} we have
\[
    \begin{pmatrix}
        \Id & 0 \\
        0 & \Id
    \end{pmatrix}
    = 
    R_k(w) R_k^{-1}(w)
    =
    \begin{pmatrix}
        -P_{k-1}(w) & Q_{k-1}(w) \\
        -P_k(w) & Q_k(w)
    \end{pmatrix}
    \begin{pmatrix}
        -\big( Q_{k}(\overline{w}) \big)^* A_{k-1}^* & \big( Q_{k-1}(\overline{w}) \big)^* A_{k-1} \\
        -\big( P_{k}(\overline{w}) \big)^* A_{k-1}^* & \big( P_{k-1}(\overline{w}) \big)^* A_{k-1}
    \end{pmatrix}.
\]
Thus, the formulas \eqref{eq:LO1} and \eqref{eq:LO2} follows from computing the last row.
\end{proof}

Now we are ready  to solve the non-homogeneous version of the matrix recurrence relation \eqref{eq:1-1}. The fact that despite of the general  non-commutativity  of matrices of size $d>1$ we obtain the formula for the solution,  which is similar to the one  from the scalar ($d=1$) case, is a kind of a miracle.
And the reason for this miracle lies  precisely in the  Liouville--Ostrogradsky formulae. This result is  also  not new, see e.g. \cite[Section 2]{Kostyuchenko1998},  but 
we present our proof  simpler and more detailed than other proofs we found in the literature.

\begin{proposition}
Let $F \in \lnz{\Mdc}$. The unique solution  $S=(S_n)_{n \in \Nk{-1}} \in \lnk{-1}{\Mdc}$  of the recurrence relation
\begin{equation} \label{eq:59}
    A_n S_{n+1} + B_n S_n + A_{n-1}^* S_{n-1} = z S_n + F_n, \quad n \geq 0.
\end{equation}
with the initial conditions $S_{-1} = S_0 = 0$ is equal to
\begin{equation} \label{eq:58}
    S_n := 
    \sum_{k=0}^{n-1} 
    \Big( Q_n(z) \big( P_k(\overline{z}) \big)^* - P_n(z) \big( Q_k(\overline{z}) \big)^* \Big) F_k, \quad n \geq -1.
\end{equation}
\end{proposition}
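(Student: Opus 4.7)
My plan is to prove uniqueness first and then verify that \eqref{eq:58} satisfies both the initial conditions and the recurrence relation, with the key algebraic miracle coming from the two Liouville--Ostrogradsky formulae proved in Theorem~\ref{LO}.

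Uniqueness is routine: since each $A_n$ is invertible, \eqref{eq:59} may be rewritten as $S_{n+1} = A_n^{-1}\bigl((z\Id - B_n)S_n - A_{n-1}^* S_{n-1} + F_n\bigr)$ for $n \geq 0$, so the two initial values $S_{-1} = S_0 = 0$ determine the whole sequence inductively. For existence, observe first that \eqref{eq:58} gives $S_{-1} = 0$ and $S_0 = 0$ by the empty-sum convention.

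To verify the recurrence, substitute \eqref{eq:58} into $A_n S_{n+1} + (B_n - z\Id) S_n + A_{n-1}^* S_{n-1}$ and split the computation according to the range of the summation index $k$. The sum in $S_{n+1}$ runs over $k = 0, \dots, n$; the one in $S_n$ over $k = 0, \dots, n-1$; and the one in $S_{n-1}$ over $k = 0, \dots, n-2$. For the ``bulk'' indices $k \leq n-2$, which appear in all three sums, the coefficient of $F_k$ is
\[
    \bigl[A_n Q_{n+1}(z) + (B_n - z\Id) Q_n(z) + A_{n-1}^* Q_{n-1}(z)\bigr] \bigl(P_k(\overline{z})\bigr)^*
    \;-\; \bigl[\text{same with } P\leftrightarrow Q\bigr]\bigl(Q_k(\overline{z})\bigr)^*,
\]
which vanishes because $Q(z), P(z) \in \MGEV{z}$ satisfy \eqref{eq:1-1} for every $n \geq 0$.

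For the boundary index $k = n-1$, which is absent from $A_{n-1}^* S_{n-1}$, the contribution reduces to
\[
    \bigl[A_n Q_{n+1}(z) + (B_n - z\Id) Q_n(z)\bigr]\bigl(P_{n-1}(\overline{z})\bigr)^*
    - \bigl[A_n P_{n+1}(z) + (B_n - z\Id) P_n(z)\bigr]\bigl(Q_{n-1}(\overline{z})\bigr)^*.
\]
Using again the recurrence for $P(z)$ and $Q(z)$, each bracket equals $-A_{n-1}^*$ times the corresponding polynomial at index $n-1$, so this expression becomes
\[
    -A_{n-1}^* \bigl[Q_{n-1}(z)\bigl(P_{n-1}(\overline{z})\bigr)^* - P_{n-1}(z)\bigl(Q_{n-1}(\overline{z})\bigr)^*\bigr],
\]
which vanishes by \eqref{eq:LO1}. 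Finally, the topmost index $k = n$ appears only in $A_n S_{n+1}$ and contributes
\[
    A_n \bigl[Q_{n+1}(z)\bigl(P_n(\overline{z})\bigr)^* - P_{n+1}(z)\bigl(Q_n(\overline{z})\bigr)^*\bigr] F_n = A_n \cdot A_n^{-1} F_n = F_n
\]
by \eqref{eq:LO2} applied with $k = n+1$. Summing these three contributions yields $F_n$, so \eqref{eq:59} holds, completing the proof.

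The only real obstacle is keeping track of the sides on which the matrices act, since in the block case the factors $(P_k(\overline{z}))^*$ and $(Q_k(\overline{z}))^*$ must sit on the right of $F_k$ to make the Liouville--Ostrogradsky identities applicable; the rest is an organized bookkeeping of three index ranges.
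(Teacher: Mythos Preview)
Your proof is correct and follows essentially the same strategy as the paper: verify uniqueness by invertibility of the $A_n$, check the initial conditions, and then confirm the recurrence by combining the fact that $P(z),Q(z)\in\MGEV{z}$ with the two Liouville--Ostrogradsky identities \eqref{eq:LO1}--\eqref{eq:LO2}. The only difference is organizational --- you group terms by the summation index $k$ from the outset, whereas the paper first expands $A_n S_{n+1}$ via the recurrence for $P$ and $Q$ and then identifies the leftover terms; both routes invoke \eqref{eq:LO1} and \eqref{eq:LO2} at exactly the same places. One small slip in your closing sentence: the factors $(P_k(\overline z))^*$ and $(Q_k(\overline z))^*$ sit on the \emph{left} of $F_k$ in \eqref{eq:58}, not on the right.
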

\begin{proof}
Since all $A_k$ are invertible it is clear that the solution of \eqref{eq:59} with given initial conditions $S_{-1}$ and $S_0$ is unique. It remains to prove that it is  satisfied by the sequence defined by   \eqref{eq:58}.

It is immediate from \eqref{eq:58} that $S_{-1} = S_0 = 0$. Moreover, we get
\[
    S_1 = \Big( Q_1(z) \big( P_0(\overline{z}) \big)^* - P_1(z) \big( Q_0(\overline{z}) \big)^* \Big) F_0 = A_0^{-1} F_0
\]
which is in agreement with \eqref{eq:59} for $n=0$.
So let us assume that $n \geq 1$. Since both $P(z)$ and $Q(z)$ satisfy \eqref{eq:1} we get
\begin{align*}
    A_n S_{n+1} 
    &=
    A_n 
    \sum_{k=0}^{n} 
    \Big( Q_{n+1}(z) \big( P_k(\overline{z}) \big)^* - P_{n+1}(z) \big( Q_k(\overline{z}) \big)^* \Big) F_k \\
    &=
    (z \Id - B_n) 
    \sum_{k=0}^{n} \Big( Q_{n}(z) \big( P_k(\overline{z}) \big)^* - P_{n}(z) \big( Q_k(\overline{z}) \big)^* \Big) F_k \\
    &- A_{n-1}^*
    \sum_{k=0}^{n} \Big( Q_{n-1}(z) \big( P_k(\overline{z}) \big)^* - P_{n-1}(z) \big( Q_k(\overline{z}) \big)^* \Big) F_k.
\end{align*}
Thus,
\[
    A_n S_{n+1} = (z \Id - B_n) S_n - A_{n-1}^* S_{n-1} + \tilde{F}_n,
\]
where
\begin{align}
    \label{eq:60}
    \tilde{F}_n &=
    (z \Id - B_n) \Big( Q_{n}(z) \big( P_n(\overline{z}) \big)^* - P_{n}(z) \big( Q_n(\overline{z}) \big)^* \Big) F_n \\
    \nonumber
    &-
    A_{n-1}^* \Big( Q_{n-1}(z) \big( P_n(\overline{z}) \big)^* - P_{n-1}(z) \big( Q_n(\overline{z}) \big)^* \Big) F_n \\
    \nonumber
    &-
    A_{n-1}^* \Big( Q_{n-1}(z) \big( P_{n-1}(\overline{z}) \big)^* - P_{n-1}(z) \big( Q_{n-1}(\overline{z}) \big)^* \Big) F_{n-1}.
\end{align}
It remains to prove that $\tilde{F}_n = F_n$. To do so, let us 
apply \eqref{eq:LO1} for $w=z$ with $k=n$ and $k=n-1$. Then we get that on the right-hand side of \eqref{eq:60} the first and the third lines are equal to $0$. By considering \eqref{eq:LO2} for $w = \overline{z}$ with $k=n$ and taking the adjoint of both sides we get
\begin{align*}
    \big( A_{n-1}^{-1} \big)^*
    &= 
    \Big( Q_{n}(\overline{z}) \big( P_{n-1}(z) \big)^* - 
    P_{n}(\overline{z}) \big( Q_{n-1}(z) \big)^* \Big)^* \\
    &= 
    P_{n-1}(z) \big( Q_n(\overline{z}) \big)^* - Q_{n-1}(z) \big( P_n(\overline{z}) \big)^*,
\end{align*}
which results in  the second line on the right-hand side of \eqref{eq:60} being  equal to $F_n$, and consequently, $\tilde{F}_n = F_n$. It ends the proof.
\end{proof}

\section{The Weyl function} \label{sec:4} 

Similarly to the scalar Jacobi case,  Weyl coefficient (being  a matrix for  the block case), is the main object in the method of  subordinacy, which gives  the link between generalized eigenvectors and the absolute continuous and the singular part of the spectral measure. And consequently -- the absolutely continuous  and the singular  spectrum of $J$.

\subsection{$\ell^2$ matrix solutions and the matrix Weyl function  $W$}\label{l2mW}

In the scalar Jacobi case  ``the scalar orthogonal polynomials'' are used, with the common notation $p(z) := P(z), \ q(z) := Q(z)$ for  $z \in \CC$. That is, since $d = 1$, we  treat complex numbers as  elements of $\Mdc$ and also as  $\CC^d$-vectors, and    $p(z), q(z)$ are solutions of   both  \eqref{eq:2'-1} and  \eqref{eq:1-1}, being now  just the same equation. 

It is also well-known for this case that if $J$ is s.a. and  $z \in \CC \setminus \RR$, then neither $\restr{p(z)}{\N_0}$, nor $\restr{q(z)}{\N_0}$  belong to the Hilbert space $\ldnz{\CC}$ in which Jacobi operator $J$ acts, and   there exists exactly one $w(z) \in \CC$ such that \ 
$\restr{\big( w(z) p(z) + q(z) \big)}{\N_0} \in \ldnz{\CC}$. Surely, instead of making the restriction to $\N_0$, we can equivalently just claim here that 
$p(z), q(z) \notin \ldnmj{\CC}$, and
$w(z) p(z) + q(z) \in \ldnmj{\CC}$, respectively.
The above unique $w(z)$ is called {\em the Weyl coefficient (for $J$ and $z$)}, and the appropriate function 
$w: \CC \setminus \RR \strz \CC$ is called {\em the Weyl function for $J$}.

Let us recall here some less known generalisations of the above  results and definitions for block Jacobi case. 
Hence, assume temporarily that $J$ is s.a. and $z \in \CC \setminus \RR$, 
until we give up these assumptions in the last part of this subsection.
Thus $z \notin \si(J)$, and we can define:
\beql{ujz}
    u^{(j)}(z) := (J - z\Id)^{-1} \de_0(e_j), \ \ j=1,\ldots, d.
\eeq
In particular, for any $j$ we have $u^{(j)}(z) \in \Dom(J) \subset \ldnz{\CC^d}$ and 
\[
    \calJ u^{(j)}(z) = z u^{(j)}(z) + \de_0(e_j), \ \ j=1,\ldots, d.
\]
Considering the  terms $n \geq 1$ of the above equality of sequences we see that  each $u^{(j)}(z)$ is a gev for $J$ and $z$. Moreover,  when $n=0$ we get 
\beql{eq-for0}
    e_j = \left( (\calJ - z\Id) u^{(j)}(z) \right)_0 = 
    (B_0 - z \Id) u^{(j)}_0(z)+ A_0 u^{(j)}_1(z).
\eeq
So,  defining  matrix sequences
\beql{def-U(z)}
    \tilde{U}(z) := 
    [u^{(1)}(z), \ldots, u^{(d)}(z)] \qquad  \mbox{and} \qquad  \quad 
    U(z) :=\ \extl{(\tilde{U}(z))},
\eeq
we have 
\beql{obaUinltwo}
\tilde{U}(z)\in\ldnz{\Mdc}, \qquad U(z) \in \ldnmj{\Mdc} 
\eeq 
and by  Fact~\ref{factsolve-ma-ve} (the version for gev-s and mgev-s) together with  Fact~\ref{fact:ext} we see that $\tilde{U}(z)$ is an mgev for $J$ and $z$ and  $U(z)$ is an emgev for $J$ and $z$.  
\begin{definition}
If   $J$ is s.a.  and $z \in \CC \setminus \RR$, then   $U(z)$ \ is  {\em the Weyl matrix  solution for $J$ and $z$}.  
\end{definition}

By \eqref{exttra-z} and \eqref{eq-for0} we obtain 
\beql{eq-matrfor0}
    \Id = 
    (B_0 - z\Id) U_0(z)+ A_0 U_1(z) = 
    U_{-1}(z).
 \eeq
For each $z\in\CC \setminus \RR$ and $\nu\in\CC^d$ denote
\beql{unuz}
u(\nu, z):=\tilde{U}(z)\nu.
\eeq
Soon it will be convenient for us to use the following simple results. 

\begin{lemma}\label{lemunuz}
If   $J$ is s.a., \ then for any $\nu\in\CC^d$ and  $z \in \CC \setminus \RR$
\beql{lemunuzi} u(\nu, z)\in D(J);
\eeq
\beql{lemunuzii} u(\nu, z)= (J-zI)^{-1}\de_{0}(\nu);
\eeq
\beql{lemunuziii} u(e_j, z)=u^{(j)}(z), \ \ j=1,\ldots, d;
\eeq
\beql{lemunuziv}
  \lesc \nu, u_0(\nu, z) \risc_{\CC^d}= 
    \lesc Ju(\nu, z) , u(\nu, z) \risc_{\el^2} - z\norm{u(\nu, z)}^2_{\el^2}.  
\eeq
\end{lemma}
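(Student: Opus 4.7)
The plan is to dispatch the four parts essentially by unwinding definitions and exploiting the linearity of the resolvent; none of the claims requires hard analysis, and indeed the statement is there mainly to fix notation for the subsequent subsection.

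First I would address (iii), which is purely a matter of reading off the matrix-sequence notation. Since $\tilde U_n(z)$ is by \eqref{def-U(z)} the matrix whose $j$-th column is $u^{(j)}_n(z)$, we have $\tilde U_n(z) e_j = u^{(j)}_n(z)$ for every $n \in \Nz$, and hence $u(e_j, z) = \tilde U(z) e_j = u^{(j)}(z)$ by the definition \eqref{unuz}.

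Next, for (i) and (ii), I would decompose $\nu = \sum_{j=1}^d \nu_j e_j$ and use bilinearity. By part (iii) and the linearity of $\tilde U(z)$ as an operator on $\CC^d$ applied termwise,
\[
u(\nu, z) \;=\; \sum_{j=1}^d \nu_j\, u^{(j)}(z) \;=\; \sum_{j=1}^d \nu_j\, (J - z\Id)^{-1} \de_0(e_j) \;=\; (J - z\Id)^{-1}\Bigl( \sum_{j=1}^d \nu_j \de_0(e_j)\Bigr) \;=\; (J - z\Id)^{-1} \de_0(\nu),
\]
where in the last step I use the linearity of $v \mapsto \de_0(v)$ visible from \eqref{den}. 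This gives (ii), and (i) is then immediate because the range of $(J - z\Id)^{-1}$ is $\Dom(J)$ (the operator $J$ being s.a.\ with $z \notin \sigma(J)$).

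Finally, for (iv), I would apply $(J - z\Id)$ to both sides of (ii) to obtain $(J - z\Id)\, u(\nu, z) = \de_0(\nu)$, and then take the $\ell^2$-inner product of both sides with $u(\nu,z)$. On the left this yields
\[
\langle J u(\nu, z), u(\nu, z)\rangle_{\el^2} - z\, \| u(\nu, z)\|_{\el^2}^2
\]
by linearity of $\sprod{\cdot}{\cdot}_{\el^2}$ in the first argument (consistent with the convention \eqref{l2scal}). On the right, the definition of the $\ell^2$-scalar product together with the fact that $\de_0(\nu)$ is supported at index $0$ gives
\[
\langle \de_0(\nu), u(\nu, z)\rangle_{\el^2} \;=\; \langle \nu, u_0(\nu, z)\rangle_{\CC^d},
\]
yielding exactly \eqref{lemunuziv}.

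The only ``step to watch'' is really the consistent use of the convention that the inner product is linear in the first argument (otherwise one would obtain $\bar z$ rather than $z$); once this is fixed, all four parts are one-line arguments from the definitions.
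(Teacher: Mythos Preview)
Your proof is correct and follows essentially the same approach as the paper: the paper also expands $u(\nu,z)=\sum_j \nu_j u^{(j)}(z)$ to obtain \eqref{lemunuzi} and \eqref{lemunuzii}, and then derives \eqref{lemunuziv} by applying $(J-z\Id)$ and taking the $\ell^2$-inner product with $u(\nu,z)$. The only cosmetic difference is ordering: the paper proves \eqref{lemunuziii} as a consequence of \eqref{lemunuzii} rather than first, but the content is identical.
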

\begin{proof}
By the linearity of $\tilde{U}(z)$ and by \eqref{def-U(z)} 
$$
u(\nu, z):=\tilde{U}(z)\nu=\sum_{j=1}^d\nu_j\tilde{U}(z)e_j=\sum_{j=1}^d\nu_ju^{(j)}(z)
$$
which gives \eqref{lemunuzi}, because 
each $u^{(j)}(z)$ is  in $\Dom(J)$. 
Now, by \eqref{ujz}  and by  the linearity of $(J-zI)^{-1}$
we have
$$
\sum_{j=1}^d\nu_ju^{(j)}(z)=\sum_{j=1}^d\nu_j(J-zI)^{-1}\de_{0}(e_j)= (J-zI)^{-1}\de_{0}\left(\sum_{j=1}^d\nu_je_j\right)=(J-zI)^{-1}\de_{0}(\nu),
$$
hence  we obtain \eqref{lemunuzii}, which gives also \eqref{lemunuziii}, by \eqref{ujz}.
Using \eqref{lemunuzii} we get 
$$
\de_{0}(\nu)=(J-zI)u(\nu, z)=Ju(\nu, z)-zu(\nu, z),
$$
and taking the scalar product of booth sides with $u(\nu, z)$ we obtain
$$
\lesc Ju(\nu, z) , u(\nu, z) \risc_{\el^2} - z\norm{u(\nu, z)}^2_{\el^2}= \lesc \de_{0}(\nu) , u(\nu, z) \risc_{\el^2}= \lesc \nu, u_0(\nu, z) \risc_{\CC^d},
$$
by \eqref{den}, so \eqref{lemunuziv} is proved.
\end{proof}

We can now formulate the expected result on ``matrix $\ell^2$  solutions''. For some special cases it  is not new --- for the case $A_n = A_n^*$  
see, e.g., \cite[Theorem VII.2.8]{Berezanskii1968} and for the  case $A_n \equiv \Id$ see \cite[Section 2]{Acharya2019}. We give here a simple proof for the general case, for  the sake of self-sufficiency.

\begin{proposition}
    \label{prop-ltwosol}
Suppose that $J$ is s.a. and $z \in \CC \setminus \RR$. Then
there exists exactly one $\Wej(z)\in\Mdc$ such that  
\beql{eq-ltwo}
    P(z) \Wej(z) + Q(z) \in \ldnmj{\Mdc}.
\eeq
Moreover, with  the above  unique $\Wej(z)$ 
\begin{enumerate}[label=(\roman*)]
    \item \label{eq:ltwosol:1}
    $P(z) \Wej(z) + Q(z)$ \  is the Weyl  matrix  solution for $J$ and $z$:
    \beql{eq-Weylmatr}
        P(z) \Wej(z) + Q(z) = U(z);
    \eeq
    
    \item \label{eq:ltwosol:2}
    $\Wej(z)=\tilde{U}_0(z) = U_0(z)$,
    
    \item \label{eq:ltwosol:3}
    $\det \Wej(z) \neq 0$.
\end{enumerate}
\end{proposition}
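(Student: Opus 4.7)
The plan is to extract $\Wej(z)$ directly from the Weyl matrix solution $U(z)$ constructed just before the Proposition: all three assertions, together with existence and uniqueness, will follow from three short applications of the structural facts already developed. For existence and items (i)--(ii), I would begin by noting that $U(z) \in \MGEV{z}$, so Fact~\ref{factsolve-ma}\ref{factsolve-ma:2} provides a unique decomposition $U(z) = P(z) S + Q(z) T$ with $S = U_0(z)$ and $T = U_{-1}(z)$. But \eqref{eq-matrfor0} already yields $U_{-1}(z) = \Id$, so defining $\Wej(z) := U_0(z)$ (which equals $\tilde{U}_0(z)$, since the extension operation only modifies the term at $n = -1$) gives $P(z) \Wej(z) + Q(z) = U(z) \in \ldnmj{\Mdc}$ by \eqref{obaUinltwo}. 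This simultaneously establishes the existence of some valid $\Wej(z)$, the identity \eqref{eq-Weylmatr} of item (i), and the equalities of item (ii).

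For uniqueness, suppose $\Wej'(z) \in \Mdc$ also satisfies \eqref{eq-ltwo} and set $C := \Wej(z) - \Wej'(z)$; subtracting gives $P(z) C \in \ldnmj{\Mdc}$. If $C \neq 0$, pick $e_j$ so that $v := C e_j \neq 0$. Multiplying $P(z) C$ on the right by $e_j$ picks out the $j$-th column of each term, so $P(z) v \in \ldnmj{\CC^d}$, whence $h := \restr{(P(z) v)}{\Nz} \in \ldnz{\CC^d}$. Fact~\ref{factsolve-ma}\ref{factsolve-ma:3} applied to this nonzero $v$ gives $h \neq 0$ and $\calJ h = z h$; since $\calJ h \in \ldnz{\CC^d}$, by \eqref{maxJdom} we have $h \in \Dom(J)$ and $J h = z h$, making $z$ an eigenvalue of the self-adjoint $J$, which contradicts $z \in \CC \setminus \RR$. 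Hence $C = 0$.

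For (iii), I would proceed again by contradiction and self-adjointness. Assume $\det \Wej(z) = 0$ and choose $\nu \in \CC^d \setminus \{0\}$ with $\Wej(z) \nu = 0$. Write $u := u(\nu, z) = \tilde{U}(z) \nu$ as in \eqref{unuz}; by item (ii) its $0$-th term is $\tilde{U}_0(z) \nu = \Wej(z) \nu = 0$. Then Lemma~\ref{lemunuz}\eqref{lemunuziv} reduces to $\lesc J u, u \risc_{\el^2} = z \norm{u}^2_{\el^2}$. Since $u \in \Dom(J)$ by Lemma~\ref{lemunuz}\eqref{lemunuzi} and $J$ is self-adjoint, $\lesc J u, u \risc_{\el^2}$ is real; taking imaginary parts yields $\Im(z) \norm{u}^2_{\el^2} = 0$, so $u = 0$ because $\Im(z) \neq 0$. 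Lemma~\ref{lemunuz}\eqref{lemunuzii} then forces $(J - z\Id)^{-1} \de_0(\nu) = 0$, whence $\nu = 0$, contradicting the choice of $\nu$.

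The only genuinely delicate step is the uniqueness argument, where one must convert the matrix-level statement $P(z) C \in \ldnmj{\Mdc}$ into a scalar $\ell^2$-eigenvector contradiction; this hinges on Fact~\ref{factsolve-ma}\ref{factsolve-ma:3}, which is precisely what guarantees that any nonzero column $v$ produces a genuine eigenvector of the formal operator $\calJ$ with the correct boundary equation at $n = 0$. The rest of the argument is a short identification of the right object (either $U(z)$ or $u(\nu,z)$) followed by an elementary imaginary-part computation exploiting self-adjointness of $J$.
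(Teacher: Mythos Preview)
Your proof is correct and follows essentially the same approach as the paper's own proof: existence and items (i)--(ii) via the decomposition of $U(z)$ from Fact~\ref{factsolve-ma}\ref{factsolve-ma:2} combined with \eqref{eq-matrfor0}; uniqueness by producing a nonreal eigenvalue of the self-adjoint $J$ via Fact~\ref{factsolve-ma}\ref{factsolve-ma:3}; and invertibility by applying Lemma~\ref{lemunuz}\eqref{lemunuziv} to a kernel vector and reading off $u=0$ from self-adjointness. The only cosmetic differences are that you pick a standard basis vector $e_j$ rather than a generic $w$ to extract a nonzero column, and you phrase the final step as ``take imaginary parts'' rather than ``$z\|w\|^2\in\RR$''; both are equivalent to what the paper does.
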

\begin{proof}
Fix $z \notin \RR$ and consider the  Weyl matrix  solution $U(z)$ for $J$ and $z$. By Fact~\ref{factsolve-ma} \ref{factsolve-ma:2} $U(z)$ has the form \eqref{eq:4}, where $S = U_0(z)=\tilde{U}_0(z)$ and $T = U_{-1}(z) = \Id$,  by \eqref{eq-matrfor0}.  So \eqref{obaUinltwo}   proves the ``exists''-- part of the assertion and, assuming that we already have the  uniqueness, we will also get  \ref{eq:ltwosol:1} and \ref{eq:ltwosol:2}. 
Hence, let us  now  prove  the uniqueness. 
Suppose that for some $z \notin \RR$ there exist two different matrices ``$\Wej(z)$'' satisfying \eqref{eq-ltwo}. Then, subtracting, we get a non-zero $C \in \Mdc$ such that \ 
$P(z) C \in \ldnmj{\Mdc}$. Now, choosing $w \in \CC^d$ such that $v := C w \neq 0$ we get 
$P(z) \nu \in \ldnmj{\CC^d}$. Thus, using Fact~\ref{factsolve-ma} \ref{factsolve-ma:3},  for $h := \restr{(P(z)\nu)}{\N_0}$ we get  $\calJ h = z h \in \ldnz{\CC^d}$, which means that $h \in \Dom(J)$. Moreover $h \neq 0$, because $h_0 = P_0(z) \nu = \Id \nu = \nu \neq 0$. Thus $h$ is an eigenvector of $J$ with the eigenvalue $z \notin \RR$ --- a contradiction with the assumption, that $J$ is s.a.

To prove  \ref{eq:ltwosol:3}, i.e.,  that $\Wej(z)$ is invertible,  consider $\nu \in \Ker W(z)$, and the vector $w := u(\nu,z)$.
By \eqref{unuz} and by \ref{eq:ltwosol:2}, just proved, we get
$$
w_0 = U_0(z)\nu = W(z)\nu= 0,
$$
which together with     \ref{lemunuziv} gives 
$$
   0= \lesc \nu, w_0 \risc_{\CC^d}  = 
    \lesc Jw , w \risc_{\el^2} - z\norm{w}^2_{\el^2}.  
$$
Therefore, using the s.a. of $J$, we get \ 
$z \norm{w}^2_{\el^2} = \lesc Jw , w \risc_{\el^2} \in \RR$. 
But since $z \notin \RR$, $w$ has to be the $0$-vector.  Now, by

\eqref{lemunuzii}
$$
\de_{0}(\nu)=(J-zI)w=0,
$$
so $\nu=0$.
\end{proof}

 It is a good moment to note the following result related to \eqref{eq-ltwo}.
\begin{corollary}\label{cor-PQnie}
Suppose that $J$ is s.a. and $z\in\CC\setminus\RR$. Then
\[
    P(z), \ Q(z) \notin \ldnmj{\Mdc}.
\]
\end{corollary}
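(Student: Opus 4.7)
The plan is to derive both non-membership statements as quick corollaries of Proposition~\ref{prop-ltwosol}, exploiting the uniqueness of $\Wej(z)$ satisfying \eqref{eq-ltwo} and the invertibility statement \ref{eq:ltwosol:3}.

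For the claim $Q(z) \notin \ldnmj{\Mdc}$, I would argue by contradiction. Suppose $Q(z) \in \ldnmj{\Mdc}$. Then the zero matrix $C = 0 \in \Mdc$ satisfies
\[
    P(z) \cdot 0 + Q(z) = Q(z) \in \ldnmj{\Mdc},
\]
so $C = 0$ fulfils condition \eqref{eq-ltwo}. By the uniqueness part of Proposition~\ref{prop-ltwosol}, this forces $\Wej(z) = 0$, contradicting $\det \Wej(z) \neq 0$ from part \ref{eq:ltwosol:3}.

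For the claim $P(z) \notin \ldnmj{\Mdc}$, I would again argue by contradiction. Suppose $P(z) \in \ldnmj{\Mdc}$. Note that right-multiplication of a matrix sequence by a fixed $C \in \Mdc$ preserves $\ldnmj{\Mdc}$ (by submultiplicativity of the operator norm, $\|P_n(z) C\| \leq \|C\| \cdot \|P_n(z)\|$, so the $\el^2$-summability passes through). Consequently, for every $C \in \Mdc$,
\[
    P(z) \big( \Wej(z) + C \big) + Q(z)
    = \big( P(z) \Wej(z) + Q(z) \big) + P(z) C \in \ldnmj{\Mdc}.
\]
By the uniqueness part of Proposition~\ref{prop-ltwosol}, this forces $\Wej(z) + C = \Wej(z)$, i.e.\ $C = 0$, for every $C \in \Mdc$. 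Since $d \geq 1$, this is a contradiction.

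The main (and in fact only) subtlety is making sure that both reductions sit correctly on top of Proposition~\ref{prop-ltwosol}: for the $Q$-statement one uses uniqueness plus nonzero determinant, while for the $P$-statement one uses uniqueness plus the linear structure of the ``ambiguity'' in $\Wej(z)$. No further computation with the explicit form of $P, Q$ or $U$ is needed, so the argument is short and does not appear to present any real obstacle.
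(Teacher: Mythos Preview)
Your proof is correct and follows essentially the same approach as the paper's: both argue by contradiction using Proposition~\ref{prop-ltwosol}, and your treatment of $Q(z)$ is identical to the paper's. For $P(z)$ there is a small variation: the paper uses the invertibility of $\Wej(z)$ to multiply \eqref{eq-ltwo} by $\Wej(z)^{-1}$ and deduce $Q(z)\in\ldnmj{\Mdc}$, reducing to the already-handled case, whereas you bypass invertibility entirely and invoke uniqueness directly (if $P(z)\in\ell^2$ then $\Wej(z)+C$ would satisfy \eqref{eq-ltwo} for every $C$). Your variant is arguably cleaner since it shows that the $P$-statement needs only the uniqueness part of Proposition~\ref{prop-ltwosol}, not \ref{eq:ltwosol:3}.
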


\begin{proof} We have $Q(z) = P(z) 0 + Q(z)$, so if  $ Q(z) \in \ldnmj{\Mdc}$ then $\Wej(z) = 0$ by the uniqueness from Proposition~\ref{prop-ltwosol}. But it contradicts the condition $\det \Wej(z) \neq 0$ from \ref{eq:ltwosol:3}.

By \eqref{eq-ltwo} and \ref{eq:ltwosol:3} of Proposition \ref{prop-ltwosol} we get 
$P(z)+ Q(z)(\Wej(z))^{-1} \in \ldnmj{\Mdc}$. Thus, if $P(z) \in \ldnmj{\Mdc}$ then also $Q(z)(\Wej(z))^{-1} \in \ldnmj{\Mdc}$.
But then also $Q(z) = \left( Q(z)(\Wej(z))^{-1} \right) \Wej(z) \in \ldnmj{\Mdc}$, which contradicts the part just proved.
\end{proof}

 Thanks to the results of Proposition \ref{prop-ltwosol},  the notion of Weyl coefficient can be  generalized from the ''scalar'' Jacobi operator case  to  all the block Jacobi operators with any (finite)  dimension $d$ of blocks.

\begin{definition}\label{maWe}
Let $J$ be s.a. For fixed $z \in \CC \setminus \RR$\ 
such   $\Wej(z)$,  that \eqref{eq-ltwo} holds is called {\em the matrix Weyl coefficient  (for $J$ and $z$)},
and the appropriate function 
$W : \CC \setminus \RR \strz \Mdc$ is called {\em the matrix Weyl function (for $J$)}.\footnote{Using the  argumentation from  the proof of   Proposition~\ref{prop-ltwosol} and from the beginning of this subsection one can easily  see that in fact it suffices here to assume that $z \in \CC \setminus \si(J)$ to  properly define the matrix Weyl coefficient  for $J$ and $z$. But note also  that  the invertibility of $\Wej(z)$ from  property~\ref{eq:ltwosol:3} is guaranteed only for  $z \in \CC \setminus \RR$.}
\end{definition}

We usually omit here the dependence on $J$ in the notation and we write simply $W$, assuming that we consider a fixed $J$.

\begin{theorem} \label{prop:9ogolniejszy}
Assume that $J$ is  s.a. and  $z \in \CC \setminus \RR$. For any $\nu \in \CC^d$ 
\begin{equation} \label{eq:114a}
    \frac{1}{\Im z}
    \big\langle \big( \Im \Wej(z) \big) \nu, \nu \big\rangle_{\CC^d}
    =\norm{\tilde{U}(z) \nu}^2_{\el^2}=
    \norm{ U(z) \nu}_{[0, +\infty]}^2\geq
    \norm{ \Wej(z) \nu}^2.
\end{equation}
Moreover, $\Im \Wej(z)$ is strictly positive for $\Im z>0$ and strictly negative for $\Im z<0$ and 
\begin{equation} \label{eq:114b}
\norm{\tilde{U}(z)}^2_{\ldnz{\Mdc}}=
    \norm{U(z)}_{[0, +\infty]}^2 \leq
	\frac{\tr \big( \Im \Wej(z) \big)}{\Im z}.
\end{equation}
\end{theorem}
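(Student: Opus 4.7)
The plan is to extract the whole statement from a single identity, obtained by applying Lemma \ref{lemunuz}(iv) to the vector $u := u(\nu,z) = \tilde{U}(z)\nu$ and using Proposition \ref{prop-ltwosol}(ii) to recognize $u_0 = \tilde{U}_0(z)\nu = W(z)\nu$. The middle equality $\|\tilde{U}(z)\nu\|^2_{\el^2} = \|U(z)\nu\|^2_{[0,+\infty]}$ is immediate from the definition \eqref{eq:105b} and the fact that $U(z)\nu$ extends $\tilde{U}(z)\nu$ solely at index $n=-1$, which the seminorm $\|\cdot\|_{[0,+\infty]}$ ignores.

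For the first equality of \eqref{eq:114a}, I would rewrite \eqref{lemunuziv} as
\[
    \lesc \nu, W(z)\nu \risc_{\CC^d} = \lesc J u, u \risc_{\el^2} - z\|u\|^2_{\el^2}.
\]
Taking imaginary parts and using that $J$ is self-adjoint (so $\lesc J u,u\risc_{\el^2} \in \RR$) gives $\Im \lesc \nu, W(z)\nu\risc_{\CC^d} = -(\Im z)\|u\|^2_{\el^2}$. Then Proposition \ref{prop:18}(iii) converts the left-hand side into $-\lesc \nu, (\Im W(z))\nu\risc_{\CC^d}$, and since $\Im W(z)$ is Hermitian, $\lesc \nu, (\Im W(z))\nu\risc = \lesc (\Im W(z))\nu, \nu\risc$ is real. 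Dividing by $\Im z$ yields the first equality. The inequality $\|\tilde{U}(z)\nu\|^2_{\el^2} \geq \|W(z)\nu\|^2$ is then trivial: retain only the $n=0$ summand of the $\el^2$-norm and use $\tilde{U}_0(z)\nu = W(z)\nu$.

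The strict definiteness of $\Im W(z)$ follows at once from the identity $\lesc (\Im W(z))\nu,\nu\risc_{\CC^d} = (\Im z)\|u\|^2_{\el^2}$ just derived. For $\nu\neq 0$ we have $\de_0(\nu)\neq 0$, and since $z\notin\sigma(J)$, Lemma \ref{lemunuz}(ii) gives $u = (J-z\Id)^{-1}\de_0(\nu)\neq 0$; hence $\|u\|^2_{\el^2} > 0$, so $\lesc (\Im W(z))\nu,\nu\risc$ carries the same sign as $\Im z$.

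For the trace bound \eqref{eq:114b} I would specialize the first equality of \eqref{eq:114a} to $\nu = e_j$ and sum over $j=1,\dots,d$. On the spectral side this produces $\tr(\Im W(z))/\Im z$; on the solution side, Fubini yields
\[
    \sum_{j=1}^d \|\tilde{U}(z)e_j\|^2_{\el^2}
    = \sum_{n=0}^{\infty}\sum_{j=1}^d \|\tilde{U}_n(z)e_j\|^2_{\CC^d}
    = \sum_{n=0}^{\infty}\HSnorm{\tilde{U}_n(z)}^2
\]
by the definition \eqref{eq:65} of the Hilbert--Schmidt norm. The only care needed is that in this paper $\el^2(\Nz,\Mdc)$ is built from the operator norm on $\Mdc$; applying $\|\cdot\| \leq \HSnorm{\cdot}$ from Proposition \ref{prop:18}(i) then upgrades the equality to the inequality $\|\tilde{U}(z)\|^2_{\el^2(\Nz,\Mdc)} \leq \tr(\Im W(z))/\Im z$. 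This bookkeeping between the two norms on $\Mdc$ is the only mildly nontrivial point of the whole argument.
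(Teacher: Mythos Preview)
Your proof is correct and follows essentially the same route as the paper's: both derive \eqref{eq:114a} by taking imaginary parts of \eqref{lemunuziv} combined with Proposition~\ref{prop-ltwosol}\ref{eq:ltwosol:2} and Proposition~\ref{prop:18}\ref{prop:18:reimsc}, and both obtain \eqref{eq:114b} by summing over $\nu=e_j$ and invoking $\norm{\cdot}\leq\HSnorm{\cdot}$. The only cosmetic difference is in the strict definiteness step: you argue $u(\nu,z)\neq 0$ directly from injectivity of the resolvent via Lemma~\ref{lemunuz}\eqref{lemunuzii}, whereas the paper appeals to the inequality in \eqref{eq:114a} together with the invertibility of $W(z)$ from Proposition~\ref{prop-ltwosol}\ref{eq:ltwosol:3}; both are equally valid.
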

\begin{proof}
The equality \ \ $\norm{\tilde{U}(z) \nu}^2_{\el^2}=
    \norm{ U(z) \nu}_{[0, +\infty]}^2$ 
is obvious by the definition of $U(z)$ (see \eqref{def-U(z)}) and by 
\eqref{eq:105b}. Taking the imaginary parts of both sides of \eqref{lemunuziv} and using Proposition \ref{prop-ltwosol} \ref{eq:ltwosol:2}, \eqref{unuz}  
  and the s.a. of $J$ we get
  \begin{align}
\label{imoba}
\Im\big\langle  \nu, \Wej(z) \nu \big\rangle _{\CC^d}=
\Im\big\langle  \nu, \tilde{U}_0(z) \nu \big\rangle _{\CC^d}&=\Im\big\langle  \nu, u_0(\nu, z) \big\rangle _{\CC^d}\\&= 
     - \Im z\norm{u(\nu, z)}^2_{\el^2}=- \Im z\norm{\tilde{U}(z) \nu}^2_{\el^2}.\nonumber
\end{align}
By Proposition \ref{prop:18} \ref{prop:18:reimsc} we have also 
$$
\Im\big\langle  \nu, \Wej(z) \nu \big\rangle _{\CC^d}=-\big\langle  \nu, (\Im\Wej(z)) \nu \big\rangle _{\CC^d}=-\big\langle  (\Im\Wej(z)) \nu,  \nu \big\rangle _{\CC^d},
$$
thus by \eqref{imoba} we obtain the equality from 
\eqref{eq:114a}, and the inequality follows from Proposition \ref{prop-ltwosol} \ref{eq:ltwosol:2}. 

To get the assertion on the strict positivity/negativity we  use \eqref{eq:114a} and the fact that \ $\Wej(z)\nu=0$ only when $\nu=0$, which follows from Proposition \ref{prop-ltwosol} \ref{eq:ltwosol:3}.

Now, apply \eqref{eq:114a} to $\nu \in \{e_1, e_2, \ldots, e_d \}$. Summing  them up we get  
\[
    \sum_{i=1}^d
    \norm{U(z) e_i}_{[0, +\infty]}^2
    =
    \frac{1}{\Im z}
    \sum_{i=1}^d 
    \big\langle \big(\Im  \Wej(z) \big) e_i, e_i \big\rangle
    =
    \frac{1}{\Im z} \tr \big(\Im \Wej(z) \big).
\]
By Proposition~\ref{prop:18} \ref{prop:18:1} and \eqref{eq:65}   we get
\[
    \sum_{i=1}^d \norm{U(z) e_i}_{[0, +\infty]}^2 =
    \sum_{k=0}^{+\infty} \sum_{i=1}^d \norm{(U(z) e_i)_k}_{\CC^d}^2 
    =
    \sum_{k=0}^{+\infty} \HSnorm{(U(z))_k}^2
    \geq
    \sum_{k=0}^{+\infty} \norm{(U(z))_k}^2 = \norm{U(z)}_{[0,+\infty]}^2.
\]
Hence
\[
    \norm{U(z)}_{[0, +\infty]}^2 \leq 
    \frac{1}{\Im z}
    \tr \big(\Im \Wej(z)\big).
\]
from which the result follows.
\end{proof}

Now we present also a result being a stronger version of \cite[Proposition 3]{block2018}. Let us stress that we {\bf do not assume  the   self-adjointness of $J$}  at this moment.

For $z \in \CC$   denote 
\beql{GEVl2}
    \GEVld{z} := \GEVze{z} \cap \ldnz{\CC^d}.
\eeq
Recall that by Fact~\ref{fact:ini-iso} we have 
$$
\dim \left( \GEVze{z} \right) = 2d, 
$$
and let us think about the dimension of its subspace $\GEVld{z}$. To find it in some cases, consider also  $\EV{z}$  --- the eigenspace for $J$ and $z$, which in the case of arbitrary $z \in \CC$ is  defined by
\[
    \EV{z} := \{ u \in \Dom(J): J u = z u \}
\] 
(and  so, it is just the trivial zero space if $z$ is not an eigenvalue of $J$). Since $J$ is the maximal block Jacobi  operator
(see \eqref{maxJdom}), we have
\beql{EVeq}
    \EV{z} = \{ u \in \GEVld{z} : ((J-z)u)_0 = 0 \}.
\eeq
Indeed, the above equality follows directly from
\[
    \GEVld{z} \subset \Dom(J),
\]
which holds, because for 
$u \in \GEVld{z}$ we have $u \in \ldnz{\CC^d}$, so also $z u \in \ldnz{\CC^d}$, but  $\calJ u$ and $z u$ differ at most at the zero term, hence $\calJ u \in \ldnz{\CC^d}$.

\begin{theorem}
    \label{duzo-malo-wl2}
If $z \in \CC \setminus \si_p(J)$, then $\dim (\GEVld{z}) \leq d$. If, moreover, $z \in \CC \setminus \si(J)$, then $\dim (\GEVld{z}) = d$.
\end{theorem}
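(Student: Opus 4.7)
The plan is to exploit the ``boundary evaluation'' map that already appears implicitly in \eqref{EVeq}, together with the resolvent construction used in \eqref{ujz}.

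For the first inequality I would introduce the linear map
\[
    \Phi : \GEVld{z} \strz \CC^d, \qquad \Phi(u) := ((J-z)u)_0.
\]
This is well defined because, as observed just before the theorem statement, $\GEVld{z} \subset \Dom(J)$, and moreover for $u \in \GEVld{z}$ the generalized eigenequation forces $((J-z)u)_n = 0$ for every $n \geq 1$, so only the $0$-th coordinate carries information. By \eqref{EVeq} we have $\Ker \Phi = \EV{z}$. If $z \notin \sigmaP(J)$ then $\EV{z} = \{0\}$, so $\Phi$ is injective and $\dim \GEVld{z} \leq \dim \CC^d = d$.

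For the second part, assume in addition $z \notin \sigma(J)$, so the resolvent $(J-z)^{-1}$ is a bounded operator on $\ldnz{\CC^d}$. In analogy with \eqref{ujz}, set
\[
    u^{(j)} := (J - z \Id)^{-1} \delta_0(e_j), \qquad j = 1, \ldots, d.
\]
Each $u^{(j)} \in \Dom(J) \subset \ldnz{\CC^d}$, and $(J-z)u^{(j)} = \delta_0(e_j)$ is supported at $0$, so $(\calJ u^{(j)})_n = z u^{(j)}_n$ for all $n \geq 1$; hence $u^{(j)} \in \GEVze{z} \cap \ldnz{\CC^d} = \GEVld{z}$. Linear independence is immediate: if $\sum_j c_j u^{(j)} = 0$, applying $J - z$ yields $\sum_j c_j \delta_0(e_j) = 0$, whence all $c_j = 0$. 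Thus $\dim \GEVld{z} \geq d$, and combined with the first part (since $\sigmaP(J) \subset \sigma(J)$) we get equality.

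There is no real obstacle here; the only point that deserves care is the verification that the map $\Phi$ lands in $\CC^d$ rather than in all of $\ldnz{\CC^d}$, which uses the recurrence relation \eqref{eq:2'} and the inclusion $\GEVld{z} \subset \Dom(J)$. Both ingredients are already recorded in the paragraph preceding the theorem, so the argument should be stated in just a few lines.
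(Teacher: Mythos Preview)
Your proposal is correct and follows essentially the same route as the paper: the paper defines the same boundary-evaluation map $\Psi(u):=((J-z)u)_0$, observes $\Ker\Psi=\EV{z}$ to get injectivity when $z\notin\sigma_p(J)$, and then, for $z\notin\sigma(J)$, produces preimages of each $\nu\in\CC^d$ via $(J-z)^{-1}\delta_0(\nu)$ to show $\Psi$ is surjective. The only cosmetic difference is that the paper phrases the second step as surjectivity of $\Psi$ (via rank--nullity) rather than exhibiting $d$ linearly independent vectors, but the content is identical.
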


\begin{proof}
Consider first an arbitrary $z \in \CC$. 
and define $\Psi: \GEVld{z} \strz \CC^d$ \  by
\[
    \Psi(u) := \left( (J - z\Id) u \right)_0, \quad u \in \GEVld{z}.
\]
It is a linear transformation and by \eqref{EVeq}
$\Ker \Psi = \EV{z}$. So, by the standard linear algebra result,
\[
    \dim \left( \GEVld{z} \right) = 
    \dim \left( \EV{z} \right) + \dim \left( \Ran \Psi \right).
\]
Hence, if $z \in \CC \setminus \si_p(J)$, then $\dim (\GEVld{z}) = \dim (\Ran \Psi) \leq d$.
But, if moreover $z \in \CC \setminus \si(J)$, then
$\Ran (J - z\Id) = \ldnz{\CC^d}$, so in particular, for any $\nu \in \CC^d$ we have $\de_{0}(\nu) \in \Ran(J - z\Id)$. 
Therefore, for some $u \in \Dom(J)$ 
\[
    J u - z u = \de_{0}(\nu),
\]
thus $u \in \GEVld{z}$ and $\Psi(u) = \nu$ for such $u$. So, $\Ran \Psi = \CC^d$ and $\dim (\GEVld{z}) = \dim (\Ran \Psi) = d$.
\end{proof}

This result gives in particular \  $\dim (\GEVld{z}) = d$,
\ when $J$ is s.a. and $z \in \CC \setminus \RR$. On the other hand, one can easily see that for such $z$ 
\[
    \GEVld{z} = \lin \{ u^{(j)}(z): j=1,\ldots, d \},
\]
where $u^{(j)}(z)$ are defined by \eqref{ujz}, and they are just the successive $j$th column sequences ($j=1,\ldots, d$) of the matrix sequence $\tilde{U}(z)$, being the restriction to $\Nz$ of the  
Weyl matrix  solution $U(z)$ for $J$ and $z$  
(see \eqref{def-U(z)}).

\subsection{The Cauchy transform of the spectral matrix measure and the matrix Weyl function}

Assume  here that  $J$ is s.a. \ 
The Cauchy transform  of the spectral matrix measure $M:=E_{J,\vec{\varphi}}$  from \eqref{eq:IV:2.1} of $J$ is  defined as \ \ 
$\Caj:\CC \setminus\RR\strz\Mdc$, with

\beql{Cau-M}
    \Caj(z): = \int_{\RR} \frac{1}{\la - z} \ud M(\la), \quad z \in \CC \setminus\RR,
\eeq
where the above integral is understood in the sense of \eqref{intscalvecmea}.
Consequently,  \eqref{Cau-M}  means just
\beql{Cau-Mij}
    \Caj(z): = 
    \left( 
        \int_{\RR} \frac{1}{\la - z} \ud M_{i,j}(\la) 
    \right)_{i,j=1,\ldots,d\,\mbox{,}}
\eeq
with
\[
    M_{i,j} = E_{J, \varphi_j, \varphi_i}, \quad i,j=1,\ldots,d,
\]
and  $\varphi_j$  given by  \eqref{fix-fincyc}. 

Hence, by spectral calculus for s.a. operators, for   
$z \in \CC \setminus \RR$ we get
\beql{Cau-resJ}
    \Caj(z)
    := 
    \left(
        \int_{\RR} \frac{1}{\la - z}\ud E_{J, \varphi_j, \varphi_i}  (\la)
    \right)_{i,j=1,\ldots,d} 
    = 
    \left(
        \big\langle (J - z \Id)^{-1} \delta_0(e_j), \delta_0(e_i) \big\rangle_{\el^2}         
    \right)_{i,j=1,\ldots,d\,\mbox{.}}
\eeq
Now, by \eqref{ujz}, \eqref{l2scal} and \eqref{def-U(z)}  for any $i,j=1,\ldots,d$
\[
    \left( \Caj(z) \right)_{i,j} = 
    \big\langle u^{(j)}(z), \delta_0(e_i) \big\rangle_{\el^2}  = 
    \big\langle (u^{(j)}(z))_0, e_i \big\rangle_{\CC^d} = 
    \left(U_0(z)\right)_{i,j}
\]
for 
$z \in \CC \setminus \RR$, where $u^{(j)}(z)$ is given by \eqref{ujz} and $U(z)$ is  Weyl matrix  solution for $J$ and $z$.
Finally, by Proposition~\ref{prop-ltwosol}, we get

\begin{fact}\label{fact-WC}
If $J$ is  s.a., then the Cauchy transform  of the spectral matrix measure of $J$ is equal to the matrix Weyl function for $J$ and moreover, \  for any  $z \in \CC \setminus \RR$
\[
    \Caj(z) = 
    \Wej(z) = 
    U_0(z) = 
    \Big(
        \big\langle (J - z \Id)^{-1} \delta_0(e_j), \delta_0(e_i) \big\rangle_{\el^2}          
    \Big)_{i,j=1,\ldots,d}.
\]
\end{fact}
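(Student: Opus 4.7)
The plan is to simply chain the identifications already suggested by the definitions, and in fact the computation fits in a few lines. Begin from \eqref{Cau-M}, rewrite the matrix integral entry-wise as in \eqref{Cau-Mij}, and recall that by Definition~\ref{smm} the scalar component $M_{i,j}$ is the complex spectral measure $E_{J,\varphi_j,\varphi_i}$. Since $J$ is self-adjoint and the function $\la \mapsto (\la - z)^{-1}$ is bounded and Borel on $\si(J) \subset \RR$ for $z \in \CC \setminus \RR$, the Borel functional calculus gives
\[
    \int_{\RR} \frac{1}{\la - z} \ud E_{J,\varphi_j,\varphi_i}(\la) = \big\langle (J - z\Id)^{-1} \varphi_j, \varphi_i \big\rangle_{\el^2}.
\]
Using the canonical choice \eqref{fix-fincyc}, i.e.\ $\varphi_j = \delta_0(e_j)$, this yields the right-most expression in the statement and establishes one of the three equalities.

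Next I would identify the resulting matrix with $U_0(z)$. By \eqref{ujz} the vector $(J - z\Id)^{-1}\delta_0(e_j)$ is exactly $u^{(j)}(z)$, so the $(i,j)$-entry becomes $\langle u^{(j)}(z), \delta_0(e_i)\rangle_{\el^2}$. Collapsing this inner product via \eqref{l2scal} and \eqref{den} reduces it to $\langle (u^{(j)}(z))_0, e_i\rangle_{\CC^d}$, which is the $(i,j)$-entry of the $0$-th term of the matrix sequence $\tilde U(z) = [u^{(1)}(z),\ldots,u^{(d)}(z)]$ from \eqref{def-U(z)}. Since $U(z) = \extl{(\tilde U(z))}$ and the extension operator $\extl{}$ changes only the value at $n=-1$, the index $n=0$ is untouched, giving $U_0(z) = (\tilde U(z))_0$ and therefore $\Caj(z) = U_0(z)$.

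Finally, the remaining equality $\Wej(z) = U_0(z)$ is exactly part~\ref{eq:ltwosol:2} of Proposition~\ref{prop-ltwosol}, which has already been established. Putting the three identifications together concludes the proof. There is no real obstacle here: the entire content of the statement is unpacking definitions plus one invocation of the Borel functional calculus, with the nontrivial matrix-analytic work (existence and properties of $\Wej(z)$) already absorbed into Proposition~\ref{prop-ltwosol}.
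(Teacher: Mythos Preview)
Your proposal is correct and follows essentially the same approach as the paper: both start from the entry-wise form of the Cauchy transform, apply the spectral/Borel functional calculus to reach the resolvent matrix elements \eqref{Cau-resJ}, then use \eqref{ujz}, \eqref{l2scal}, and \eqref{def-U(z)} to identify this with $U_0(z)$, and finally invoke Proposition~\ref{prop-ltwosol}\ref{eq:ltwosol:2} for the equality with $\Wej(z)$.
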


\subsection{The boundary limits of the matrix Weyl function and the properties of the spectral trace measure} \label{sec:boundaryWeyl}

Assume here  the self-adjointness of $J$, as before, and let us use the notation from the previous subsection, including  \  $M := E_{J,\vec{\varphi}}$.

Fact~\ref{fact-WC} \ implies that $W$  is  a holomorphic matrix-valued function. 

Moreover, $\Im W$ is a strictly positive matrix on $\CC_+$ from Theorem \ref {prop:9ogolniejszy}.  Thus  we see that  the restriction of $W$ to  $\CC_+$ is a matrix Herglotz function. Denote 
\[
    \BLW := 
    \Big\{
        \la \in \RR: \lim_{\epsilon \to 0^+} \Wej(\la + i\epsilon) \  \mbox{exists}\footnote{As the  limit in $\Mdc$, i.e., in particular the limit must belong to $\Mdc$.}
    \Big\},
\]
and
\[
    \Wej(\la + i0) := 
    \lim_{\epsilon \to 0^+} \Wej(\la + i\epsilon), \quad 
    \la \in \BLW,
\]
and define the following sets:
\begin{align}
    \label{Sacr}
    \Sacr &:= 
    \big\{ \la \in \BLW : \rank \big( \Im \Wej(\la + i0) \big) = r \big\}, 
    \quad 1 \leq r \leq d, \\
    \label{Sac}
    \Sac &:= 
    \bigcup_{r=1}^d \Sacr\,, \\
    \label{Ssi} 
    \Ssing &:= 
    \Big\{ 
        \la \in \RR : 
        \lim_{\epsilon \to 0^+} \Im \big( \tr \Wej(\la+i\epsilon) \big) = +\infty 
    \Big\}.
\end{align}
In particular it follows that  
\beql{Ssi'}
    \Ssing \subset \RR \setminus \BLW. 
\eeq 

Referring to \eqref{Ssi}, it is worth to note that for any $A \in \Mdc$ 
\[
    \tr(\Im A) = \Im(\tr A).
\]
It is important that \eqref{Sac} means simply 
\beql{Sac'}
    \Sac = \big\{ \la \in \BLW : \Im \Wej(\la + i0) \neq 0 \big\}.
\eeq

Let us recall now the crucial  result, joining   the above defined sets with  properties of \ $M_{\mathrm{ac}}$ \ and  \ $M_{\mathrm{sing}}$ \  --- the a.c. and the sing. parts  \ 
of the spectral matrix measure $M$ of $J$ w.r.t. the  Lebesgue measure $|\cdot|$  on $\Bor(\RR)$ \   (see Fact~\ref{acscM}).
This theorem is obtained just as the direct use of the abstract result 
\cite[Theorem 6.1]{Gesztesy2000} to the spectral  matrix measure $M$.

Define  $D: \BLW \strz\Mdc$  by 
\beql{eq:Mac:a}
    D(\la) := \frac{1}{\pi} \Im \Wej(\la + i0), \qquad \la \in \BLW. 
\eeq

\begin{theorem}\label{ThdensM}
\hspace{0.1 ex}
\begin{enumerate}[label=(\roman*)]
    \item \label{ThdensM:1}
    $\Ssing$  is a support of $\Msing$;
    
    \item \label{ThdensM:2}
    $|\RR\setminus \BLW|=0$;
    
    \item \label{ThdensM:3}
    $D$ is a density of $\Mac$ on $\BLW$ w.r.t. \ $|\cdot|$. 
\end{enumerate}
\end{theorem}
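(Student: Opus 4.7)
The plan is to reduce all three assertions to classical boundary value theorems for Cauchy transforms of finite Borel measures, via the identification $\Wej = \Caj$ given by Fact~\ref{fact-WC}. First I would observe that by \eqref{Cau-Mij} each matrix entry
\[
\Wej_{i,j}(z) = \int_\RR \frac{1}{\la - z}\, \ud M_{i,j}(\la), \qquad z \in \CC \setminus \RR,
\]
is the Cauchy (Borel) transform of the finite complex Borel measure $M_{i,j} = E_{J,\varphi_j,\varphi_i}$. By the classical Fatou theorem for Cauchy transforms of finite complex measures on $\RR$, the non-tangential (hence in particular the vertical) limit $\lim_{\epsilon \to 0^+} \Wej_{i,j}(\la+i\epsilon)$ exists for Lebesgue-a.e.\ $\la \in \RR$. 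Intersecting the $d^2$ resulting full-measure sets yields part~\ref{ThdensM:2}: the set $\BLW$ is of full Lebesgue measure.

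For part~\ref{ThdensM:3}, I would invoke Stieltjes inversion entry by entry: the Radon--Nikodym derivative of the absolutely continuous part of the complex measure $M_{i,j}$ with respect to Lebesgue measure equals $\tfrac{1}{\pi}\Im \Wej_{i,j}(\la+i0)$ for a.e.\ $\la$, and this is precisely the $(i,j)$-entry of $D(\la)$ in \eqref{eq:Mac:a}. Combined with the entrywise description of the Lebesgue decomposition of a matrix measure contained in Fact~\ref{acscM}, this identifies $D$ as a density of $\Mac$ on $\BLW$ with respect to $|\cdot|$.

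For part~\ref{ThdensM:1}, the cleanest route is via the scalar trace measure. By linearity of the trace and \eqref{Cau-M}, the function
\[
\tr \Wej(z) = \int_\RR \frac{1}{\la - z} \, \ud(\tr_M)(\la), \qquad z \in \CC_+,
\]
is a scalar Herglotz function whose (positive, finite) representing measure is $\tr_M$. The classical scalar boundary-value result (e.g.\ the standard description of the Lebesgue decomposition via the Borel transform, see \cite[Lemma~3.11]{Teschl2000} and references therein) then asserts that the singular part of $\tr_M$ is carried by the set $\bigl\{ \la : \lim_{\epsilon\to 0^+} \Im \tr \Wej(\la+i\epsilon) = +\infty \bigr\}$, which is exactly $\Ssing$. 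Finally, Lemma~\ref{techLem} from Appendix~\ref{vecmacmeas} transfers this supporting property from the scalar singular measure $(\tr_M)_{\mathrm{sing}}$ to the matrix singular measure $\Msing$, giving~\ref{ThdensM:1}.

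The main obstacle I expect is purely one of careful bookkeeping in the matrix--scalar passage: one must verify that $\Mac$ really is the matrix measure whose entries are the absolutely continuous parts of the entrywise complex measures $M_{i,j}$ (needed for~\ref{ThdensM:3}), and that the supports of $\Msing$ are controlled by the supports of $(\tr_M)_{\mathrm{sing}}$ (needed for~\ref{ThdensM:1}). Both points reduce to the general measure-theoretic facts collected in the appendix, but some care is required because the off-diagonal $M_{i,j}$ are only complex (not positive) measures, so their Lebesgue decompositions have to be combined coherently through the dominating scalar measure $\tr_M$ rather than directly.
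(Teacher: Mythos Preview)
The paper does not actually prove this theorem: it is stated as a direct application of \cite[Theorem~6.1]{Gesztesy2000} to the spectral matrix measure $M$, with no further argument. Your proposal, by contrast, outlines a self-contained proof that bypasses the matrix Herglotz machinery of \cite{Gesztesy2000} and reduces everything to classical scalar Cauchy-transform theory, applied entrywise and to the trace. This is a legitimate and more elementary route, and the bookkeeping issues you flag in the last paragraph are real but manageable (the key identity $(M_{\mathrm{ac}})_{i,j}=(M_{i,j})_{\mathrm{ac}}$ follows from Fact~\ref{acscM} by writing both sides as $D_{i,j}\,\ud(\tr_M)_{\mathrm{ac}}$).

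One concrete correction: in your argument for part~\ref{ThdensM:1} you invoke Lemma~\ref{techLem} to pass from ``$\Ssing$ supports $(\tr_M)_{\mathrm{sing}}$'' to ``$\Ssing$ supports $\Msing$'', but Lemma~\ref{techLem} runs in the \emph{opposite} direction --- its hypothesis~\ref{techLem:1} already assumes $S_s$ is a support of $M_{\mathrm{sing},\nu}$ and its conclusion is about $(\tr_M)_{\mathrm{sing},\nu}$. The tool you actually need is the final equivalence in Fact~\ref{acscM}, which states that $S$ is a support of $M_{\mathrm{sing},\nu}$ iff it is a support of $(\tr_M)_{\mathrm{sing},\nu}$. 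With that replacement the argument for~\ref{ThdensM:1} is complete.
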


So, this theorem  shows that controlling of the  boundary limits of the matrix Weyl function allows to get a lot of detailed information about the spectral matrix measure of $J$ and  of its sing. and a.c. parts. 

Combining the theorem with Lemma~\ref{techLem} we get: 
 
\begin{proposition} \label{thm:1}
  $\Sac$ is a minimal support of $ (\tr_{M})_{\mathrm{ac}}$ with respect to  $|\cdot|$. Moreover $\Ssing$ is a support of $(\tr_{M})_{\mathrm{sing}}$ \ and \ $|\Ssing| = 0$. Moreover, \ 
  $\Sac\cup\delta$ is also a minimal support of $ (\tr_{M})_{\mathrm{ac}}$ with respect to  $|\cdot|$ for any Borel $\delta\subset\RR$ with \ $|\delta|=0$.
\end{proposition}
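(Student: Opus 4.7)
The plan is to transfer the information about the matrix measure $M$ supplied by Theorem~\ref{ThdensM} to its scalar trace measure $\tr_M$ by means of Lemma~\ref{techLem} from Appendix~\ref{vecmacmeas}. The two facts I expect to exploit are: first, that the Lebesgue decomposition commutes with taking the trace of a matrix measure, so $(\tr_M)_{\mathrm{ac}} = \tr_{\Mac}$ and $(\tr_M)_{\mathrm{sing}} = \tr_{\Msing}$; second, that a density of $(\tr_M)_{\mathrm{ac}}$ with respect to $|\cdot|$ is the pointwise trace of a density of $\Mac$.

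For the singular assertion I would first invoke Theorem~\ref{ThdensM}\ref{ThdensM:1}: $\Ssing$ supports $\Msing$ and, by the support-preserving part of Lemma~\ref{techLem}, it then also supports $(\tr_M)_{\mathrm{sing}}$. To show $|\Ssing| = 0$, I would combine the inclusion $\Ssing \subset \RR \setminus \BLW$ recorded in \eqref{Ssi'} with $|\RR \setminus \BLW| = 0$ from Theorem~\ref{ThdensM}\ref{ThdensM:2}.

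For the minimality of $\Sac$, I would start from Theorem~\ref{ThdensM}\ref{ThdensM:3}, which gives that $D = \tfrac{1}{\pi}\Im \Wej(\cdot + i0)$ is a density of $\Mac$ on $\BLW$ with respect to $|\cdot|$. Passing to the trace via Lemma~\ref{techLem}, the function $\tr D$ becomes a density of $(\tr_M)_{\mathrm{ac}}$ on $\BLW$. The key pointwise observation is that $\Im \Wej$ is strictly positive on $\CC_+$ (Theorem~\ref{prop:9ogolniejszy}), so its nontangential boundary value $\Im \Wej(\la + i0)$, and hence $D(\la)$, is positive semi-definite for every $\la \in \BLW$. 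For a positive semi-definite matrix, vanishing of the trace is equivalent to vanishing of the matrix itself, so combined with \eqref{Sac'} this yields
\[
    \{\la \in \BLW : \tr D(\la) > 0\} = \{\la \in \BLW : D(\la) \neq 0\} = \Sac.
\]
Since $|\RR \setminus \BLW| = 0$, this identifies $\Sac$ modulo a Lebesgue null set with the positivity set of a density of $(\tr_M)_{\mathrm{ac}}$, which is precisely the standard characterization of a minimal support with respect to $|\cdot|$. The last assertion — that $\Sac \cup \delta$ is still a minimal support for any Borel $\delta$ with $|\delta| = 0$ — then follows at once, since the notion of minimal support with respect to $|\cdot|$ is invariant under modifications by Lebesgue null Borel sets.

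The main obstacle will be the correct application of Lemma~\ref{techLem}: I have to verify that the Lebesgue decomposition really commutes with the trace operation on matrix measures, and that ``density of the trace equals trace of the density'' in the appropriate Radon--Nikodym sense on $\BLW$. This is precisely the role of the abstract material collected in the appendix, and once it is in place the remaining reduction to the elementary linear-algebraic fact $D(\la) \geq 0 \Rightarrow (\tr D(\la) = 0 \Leftrightarrow D(\la) = 0)$ is essentially automatic.
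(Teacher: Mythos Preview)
Your overall strategy is correct and matches the paper's, but you misread what Lemma~\ref{techLem} actually packages. The statements ``$(\tr_M)_{\mathrm{ac}}=\tr_{\Mac}$'' and ``supports transfer between $M$ and $\tr_M$'' are the content of Fact~\ref{acscM}, not of Lemma~\ref{techLem}. Lemma~\ref{techLem} is already the ready-made minimal-support statement: one feeds it a support $S_a$ of $\Mac$, a disjoint support $S_s$ of $\Msing$, and a nonnegative density $F$ of $\Mac$ on $S_a$ with $\nu(\{F=0\})=0$, and it outputs directly that $S_a$ is a minimal support of $(\tr_M)_{\mathrm{ac},\nu}$ and $S_s$ is a support of $(\tr_M)_{\mathrm{sing},\nu}$. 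The paper's proof simply verifies these four hypotheses with $S_a=\Sac$, $S_s=\Ssing$, $F=\restr{D}{\Sac}$: hypothesis~(ii) comes from \eqref{Ssi'} and \eqref{Sac'}; hypothesis~(i) for $\Sac$ requires noting, via Theorem~\ref{ThdensM}\ref{ThdensM:2}--\ref{ThdensM:3}, that $\BLW$ supports $\Mac$ and that $D$ vanishes on $\BLW\setminus\Sac$, so $\Sac$ itself supports $\Mac$; hypotheses~(iii) and~(iv) are immediate from Theorem~\ref{ThdensM}\ref{ThdensM:3} and \eqref{Sac'}. Your ``trace-of-the-density'' computation and the equivalence $\tr D(\la)=0\Leftrightarrow D(\la)=0$ for $D(\la)\geq 0$ are exactly what happens \emph{inside} the proof of Lemma~\ref{techLem}, so you are re-deriving that lemma inline rather than invoking it. Nothing is wrong, but the paper's route is shorter and avoids having to justify separately that $\tr D$ is a density of $(\tr_M)_{\mathrm{ac}}$ on $\BLW$, a fact which the appendix does not state as a standalone conclusion. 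The arguments for $|\Ssing|=0$ and for the last assertion agree with the paper.
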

\begin{proof}
We use Lemma~\ref{techLem} taking:
\[
    \nu := |\cdot| \ \  \mbox{so, also \  $\Omega:=\RR$ \ and \ $\frakM:=\Bor(\RR)$}, \ \ \ 
    S_a := \Sac\,, \ \ \ 
    S_s := \Ssing\,, \ \ \ 
    F := \restr{D}{\Sac}
\]
and by \eqref{Ssi'} with \eqref{Sac'}  we see that $S_a \cap S_s = \emptyset$, that is, the assumption~\ref{techLem:2} of  the lemma holds.
Now,  \ref{ThdensM:2} of Theorem~\ref{ThdensM} shows that $\BLW$ is a support of $\Mac$, since this  matrix measure, by definition,  is a.c. w.r.t. $|\cdot|$. But \ref{ThdensM:3} of this  theorem with \eqref{Sac'} and  \eqref{eq:Mac:a} mean, that 
the restriction of $\Mac$ to $\BLW \setminus \Sac$ is the zero matrix measure. Therefore $\Sac$ is also a support of $\Mac$. This together with \ref{ThdensM:1} of  Theorem~\ref{ThdensM} prove that the assumption~\ref{techLem:1} of  the lemma holds. The assumption~\ref{techLem:3} also holds, by the fact that $\Sac \subset \BLW$ and by \ref{ThdensM:3} of Theorem~\ref{ThdensM}, again.  And  \ref{techLem:4} of the lemma is obvious by \eqref{Sac'}. Therefore Lemma~\ref{techLem} yields  the first two  assertions  and   $|\Ssing| = 0$ by \eqref{Ssi'} with \ref{ThdensM:3} of the theorem. The last assertion is obvious just by the definition of minimal support.
\end{proof}

\subsection{The boundary limits and spectral consequences for $J$}

Assume the  self-adjointness of $J$, and let us hold the notation  as above.

Here  we ``translate'' Theorem~\ref{ThdensM} into the spectral operator  language via 
Proposition~\ref{thm:1}.  Namely, we show:  

\begin{theorem}
\label{cor:1}
Suppose that $J$ is self-adjoint and $G \in \Bor(\RR)$.
\begin{enumerate}[label=(\roman*)]
    \item \label{cor:1:1}
    If $G \subset \RR \setminus \Ssing$, then $J$ is absolutely continuous in $G$.
    
    \item \label{cor:1:2}
    If $G \subset \Sac\cup(\RR\setminus(\BLW\cup\Ssing))$, then $J$ is absolutely continuous in $G$ and \ $\clleb{G} \subset \sigmaAC(J)$. So, if moreover $G$ is open, or if $G$ is a sum of an arbitrary family of connected non-singletons in $\RR$, then  $\cl{G} \subset \sigmaAC(J)$.
\end{enumerate}
\end{theorem}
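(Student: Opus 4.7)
Both parts will follow by translating the measure-theoretic statements of Proposition~\ref{thm:1} about $\Sac$ and $\Ssing$ into spectral information about $J$. The bridge is the standard observation that, since $\vec\varphi$ is a cyclic system for the self-adjoint $J$ (Proposition~\ref{prop-fincyc}), every scalar spectral measure $E_{J,x}$ is absolutely continuous with respect to $\tr_M$. Indeed, if $\tr_M(B)=0$ then $M(B)=0$ by positivity of $M$, hence $\|E_J(B)\varphi_j\|_{\el^2}^2=E_{J,\varphi_j}(B)=M_{j,j}(B)=0$ for each $j$; cyclicity then propagates $E_J(B)\varphi_j=0$ to $E_J(B)=0$ on all of $\ldnz{\CC^d}$, so $E_{J,x}(B)=0$ for every $x$.

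For~\ref{cor:1:1}, fix $x\in\calH_G(J)$, so that $E_{J,x}(\RR\setminus G)=0$. Lebesgue-decompose $E_{J,x}=E_{J,x}^{\mathrm{ac}}+E_{J,x}^{\mathrm{sing}}$. A short chase using $E_{J,x}\ll\tr_M$ together with the mutual singularity of the $\mathrm{ac}$/$\mathrm{sing}$ parts yields $E_{J,x}^{\mathrm{sing}}\ll(\tr_M)_{\mathrm{sing}}$. By Proposition~\ref{thm:1}, $\Ssing$ is a support of $(\tr_M)_{\mathrm{sing}}$, so $E_{J,x}^{\mathrm{sing}}$ is concentrated on $G\cap\Ssing$. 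Under the hypothesis $G\subset\RR\setminus\Ssing$, this set is empty, whence $E_{J,x}=E_{J,x}^{\mathrm{ac}}$, i.e.\ $x\in\calH_{\mathrm{ac}}(J)$.

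For~\ref{cor:1:2}, I first observe that $\Sac\subset\BLW$ by~\eqref{Sac'} while $\Ssing\subset\RR\setminus\BLW$ by~\eqref{Ssi'}, so $\Sac\cap\Ssing=\emptyset$. Hence $G\subset\Sac\cup(\RR\setminus(\BLW\cup\Ssing))\subset\RR\setminus\Ssing$, and the absolute continuity claim follows from~\ref{cor:1:1}. For the inclusion $\clleb{G}\subset\sigmaAC(J)$, I invoke the standard identity $\sigmaAC(J)=\clleb{\Sac}$, which, in the finitely cyclic situation, follows from Theorem~\ref{unit-rep-bl}: the a.c.\ part of $J$ is unitarily equivalent to multiplication by $\xx$ on the a.c.\ component of $L^2(M)$, whose spectrum is the essential closure of any minimal support of $(\tr_M)_{\mathrm{ac}}$ (by Proposition~\ref{thm:1} this is $\clleb{\Sac}$). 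Since $|\RR\setminus\BLW|=0$ by Theorem~\ref{ThdensM}\ref{ThdensM:2}, the piece $G\setminus\Sac\subset\RR\setminus(\BLW\cup\Ssing)$ has Lebesgue measure zero, so the definition~\eqref{clos-leb} immediately gives $\clleb{G}=\clleb{G\cap\Sac}\subset\clleb{\Sac}=\sigmaAC(J)$.

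For the last assertion of~\ref{cor:1:2}, note that under either additional hypothesis on $G$ every $\la\in G$ belongs to some non-degenerate interval contained in $G$, so every neighborhood of $\la$ meets $G$ in positive Lebesgue measure; thus $G\subset\clleb{G}$, and since $\clleb{G}$ is closed we conclude $\cl{G}\subset\clleb{G}\subset\sigmaAC(J)$. The only non-routine points are the two ``standard'' facts above — that all $E_{J,x}$ are absolutely continuous with respect to $\tr_M$, and that $\sigmaAC(J)=\clleb{\Sac}$ — each of which amounts to a short lemma justified by the cyclic structure of $\vec\varphi$ and the $L^2(M)$ representation of $J$; once these are in hand, everything else is bookkeeping about supports and essential closures.
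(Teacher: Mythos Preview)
Your argument is correct and follows essentially the same strategy as the paper: both proofs translate the measure-theoretic content of Proposition~\ref{thm:1} (supports of $(\tr_M)_{\mathrm{ac}}$ and $(\tr_M)_{\mathrm{sing}}$) into spectral statements about $J$ via the cyclicity of $\vec\varphi$.

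The execution differs slightly. The paper outsources the translation step to an external reference (\cite[Theorem C.2 and Fact C.3]{Moszynski2022}), whereas you carry it out in-line: you prove $E_{J,x}\ll\tr_M$ directly from cyclicity and then chase the Lebesgue decomposition. For part~\ref{cor:1:2} the paper argues that the enlarged set $\Sac\cup(\RR\setminus(\BLW\cup\Ssing))$ is itself a minimal support of $(\tr_M)_{\mathrm{ac}}$ and applies the cited theorem, while you instead use the identity $\sigmaAC(J)=\clleb{\Sac}$ together with the observation $\clleb{G}=\clleb{G\cap\Sac}$ (since $|G\setminus\Sac|=0$). Both routes rest on the same underlying fact that $\sigmaAC(J)$ is the essential closure of any minimal support of $(\tr_M)_{\mathrm{ac}}$; your version is more self-contained, at the cost of invoking that ``standard identity'' with only a sketch of justification via Theorem~\ref{unit-rep-bl}. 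The final paragraph on $\cl{G}=\clleb{G}$ for open $G$ or unions of non-degenerate intervals matches the paper's appeal to \cite[Fact C.3]{Moszynski2022}.
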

\begin{proof}
First of all, by Proposition~\ref{prop-fincyc}, $J$ is s.a.  finitely-cyclic operator, with 
$\vec{\varphi}$  being a cyclic system for $J$ and with  
$M$ being  the  spectral matrix measure of  $J$
and $\vec{\varphi}$.
Hence the initial  assumptions of  \cite[Theorem C.2]{Moszynski2022} hold for $J$. So, using its assertion  (2),  we obtain  our~\ref{cor:1:1}, because, by Proposition~\ref{thm:1}, \ \ $(\tr_{M})_{\mathrm{sing}}(G) = 0$ for $G \subset \RR \setminus \Ssing$. 

By Theorem~\ref{ThdensM} \ref{ThdensM:3} we get $|\RR\setminus(\BLW\cup\Ssing)|=0$.
Hence, $\Sac \cup \big( \RR \setminus (\BLW \cup \Ssing) \big)$ is a minimal support of \ $(\tr_{M})_{\mathrm{ac}}$ with respect to  $|\cdot|$ \ by Proposition~\ref{thm:1}. Moreover $(\tr_{M})_{\mathrm{sing}}(G)=0$, again  because $\Sac \cup \big( \RR \setminus (\BLW \cup \Ssing) \big) \subset \RR \setminus \Ssing$ and 
$\Ssing$ is a support of $(\tr_{M})_{\mathrm{sing}}$ by  Theorem~\ref{ThdensM} \ref{ThdensM:1}.

Now, by  the assertion  (3) of \cite[Theorem C.2]{Moszynski2022},
the assertion for the special kinds of $G$ follows from the property $\clleb{G} = \cl{G}$, which holds for those $G$ (see, e.g., \cite[Fact C.3]{Moszynski2022}).   
\end{proof}

\section{On some ideas of nonsubordinacy for the block case} \label{sec:Subord}

\subsection{The  nonsubordinacy and the vector nonsubordinacy} \label{sec:m-v-nonsubodinacy}

Let us define a   notion, which seems quite  natural from the context of the crucial notion of {\em subordinated solutions}  from  Gilbert--Pearson--Khan 
subordination theory (see \cite{Gilbert1987, Khan1992}),  concerning   the $d=1$ case.
Recall that the ``interpolated'' semi-norms $\norm{\cdot}_{[0,t]}$ were 
introduced here in Section~\ref{J-Lsemi}.

\begin{definition} \label{def:vnonsub}
We say that {\em $J$ satisfies vector nonsubordinacy (condition) for $\la \in \RR$} \ iff\footnote{Note that we consider here  the function given  by the  fraction $\frac{\norm{u}_{[0,t]}^2}{\norm{v}_{[0,t]}^2}$ for $t>0$ only, and the denominator is positive since here sequences are  not the zero sequence and they belong to $\GGEVze$; Similarly for the definition below.} for each pair of  non-zero $u,v \in \GEV{ \la}$ 
\begin{equation} \label{eq:121}
    \liminf_{t \to +\infty} 
    \frac{\norm{u}_{[0,t]}}{\norm{v}_{[0,t]}}  < +\infty.\hspace{0.1em}\hspace{0.3em} 
\end{equation}
\end{definition}

We can also consider ''another'' (soon...) notion, using the analogical  matrix (emgev) terms formulation.

\begin{definition} \label{def:nonsub}
$J$ satisfies \emph{nonsubordinacy} condition for $\la \in \RR$ \ iff \ for each pair of non-zero $U,V \in \MGEV{ \la}$
\begin{equation} \label{eq:122}
    \liminf_{t \to +\infty} 
    \frac{\norm{U}_{[0,t]}}{\norm{V}_{[0,t]}} < +\infty.
\end{equation}
\end{definition}
Note here that the choice of  ``liminf-s over $t>0$'', instead of more  original Khan and Pearson's like ``liminf-s over $n\in\NN$'', does not matter in both definitions, i.e., in both of them it  is equivalent to this ''over $n\in\NN$''. One direction of the implication follows directly from the definition of $\liminf$, and the other can be immediately obtained by Corollary~\ref{szac-iloraznort-norn}.
As we shall see soon, also the distinction  between these  two kinds ``vector nonsubordinacy'' / ``nonsubordinacy'' notions is not very important here.

We can use the symmetry w.r.t. $u$ and $v$ or $U$ and $V$, respectively, and we get:

\begin{fact}\label{factsym}
$J$ satisfies {vector nonsubordinacy}  for $\la \in \RR$ \ iff \ for each pair of  non-zero $u,v \in \GEV{ \la}$ 
\begin{equation} \label{eq:121'}
    \limsup_{t \to +\infty} 
    \frac{\norm{u}_{[0,t]}}{\norm{v}_{[0,t]}}  >0. 
\end{equation}
And analogically in the   nonsubordinacy case.
\end{fact}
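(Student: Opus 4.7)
The plan is to exploit two simple observations: (a) the universal quantifier over ordered pairs $(u,v)$ in Definition \ref{def:vnonsub} is symmetric (swapping yields the same collection of pairs), and (b) for any positive function $f(t) > 0$ one has the elementary identity
\[
    \limsup_{t \to +\infty} f(t) \cdot \liminf_{t \to +\infty} \frac{1}{f(t)} = 1
\]
in the extended real sense, so $\liminf_{t} 1/f(t) < +\infty$ is equivalent to $\limsup_{t} f(t) > 0$. Note that for non-zero $u, v \in \GEV{\la}$ the denominators $\norm{v}_{[0,t]}$ and $\norm{u}_{[0,t]}$ are strictly positive for all $t$ sufficiently large, since non-zero (e)gev-s cannot vanish identically from some index on (this follows from Fact \ref{fact:ext}\ref{fact:ext:4} by backward propagation via the invertibility assumption \eqref{zal-odw+sym}; in fact we can safely restrict the $\liminf$ / $\limsup$ to $t$ large enough so that both seminorms are positive).

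First I would fix arbitrary non-zero $u, v \in \GEV{\la}$ and apply observation (b) to the positive function $f(t) := \norm{u}_{[0,t]}/\norm{v}_{[0,t]}$, obtaining
\[
    \limsup_{t \to +\infty} \frac{\norm{u}_{[0,t]}}{\norm{v}_{[0,t]}} > 0
    \iff
    \liminf_{t \to +\infty} \frac{\norm{v}_{[0,t]}}{\norm{u}_{[0,t]}} < +\infty.
\]
Then, assuming vector nonsubordinacy, I would apply Definition \ref{def:vnonsub} to the \emph{swapped} pair $(v,u)$ (which is again a pair of non-zero elements of $\GEV{\la}$) to conclude that the right-hand side above holds. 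As $(u,v)$ was arbitrary, \eqref{eq:121'} follows. The converse direction is identical with the roles of $\liminf$ and $\limsup$ interchanged: given \eqref{eq:121'} for all non-zero pairs, apply it to $(v,u)$ and use the equivalence above to recover \eqref{eq:121}.

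The same argument, verbatim, applies to the matrix (nonsubordinacy) case with $U, V \in \MGEV{\la}$ replacing $u, v$ and the collection $\MGEV{\la}$ being symmetric under the swap $(U,V) \leftrightarrow (V,U)$. There is no real obstacle here; the only thing to be slightly careful about is the positivity of the two seminorms for large $t$, which rules out division-by-zero issues in the identity (b) and allows the clean equivalence $\liminf 1/f = 1/\limsup f$ to be used without qualification.
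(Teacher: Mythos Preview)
Your proposal is correct and takes essentially the same approach as the paper, which does not give a formal proof but simply remarks that one can ``use the symmetry w.r.t.\ $u$ and $v$ or $U$ and $V$, respectively'' to pass from \eqref{eq:121} to \eqref{eq:121'}. You have merely made explicit the reciprocal $\liminf$/$\limsup$ identity and the positivity of the seminorms that underlie this one-line observation.
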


As we already  announced, the  above  two definitions are equivalent, so we shall rather use only the name 'nonsubordinacy'' and not ''vector nonsubordinacy''.

\begin{proposition} \label{prop:17}
Let $\la \in \RR$. $J$ satisfies  nonsubordinacy  for $\la$ \ iff \  it satisfies vector nonsubordinacy for $\la$.
\end{proposition}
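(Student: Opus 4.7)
For nonsubordinacy $\Rightarrow$ vector nonsubordinacy (the easy direction), given non-zero $u, v \in \GEV{\la}$, I would embed them as single-column emgev-s: let $U \in \MGEV{\la}$ be the unique emgev with initial conditions $U_{-1} = E^{u_{-1}}$, $U_0 = E^{u_0}$ (see \eqref{def:Ev}), and define $V$ analogously from $v$. By the linearity of the recurrence and the columnwise structure of Fact~\ref{factsolve-ma-ve}, one obtains $U_n = E^{u_n}$ for every $n$, and similarly $V_n = E^{v_n}$; Proposition~\ref{prop:18}\ref{prop:18:3} with $A = \Id$ then yields $\norm{U_n} = \norm{u_n}_{\CC^d}$ and $\norm{V_n} = \norm{v_n}_{\CC^d}$, whence $\norm{U}_{[0,t]} = \norm{u}_{[0,t]}$ and $\norm{V}_{[0,t]} = \norm{v}_{[0,t]}$. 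Applying nonsubordinacy to $(U,V)$ is then exactly vector nonsubordinacy for $(u,v)$.

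For the converse, take $U, V \in \MGEV{\la}$ non-zero and set $u^{(i)} := \col{U}{i}$, $v^{(j)} := \col{V}{j}$, all in $\GEV{\la}$ by Fact~\ref{factsolve-ma-ve}. Proposition~\ref{prop:18}\ref{prop:18:1} yields $\norm{U_k}^2 \le \HSnorm{U_k}^2 = \sum_i \norm{u^{(i)}_k}_{\CC^d}^2$ at each site, while the trivial bound $\norm{V_k} \ge \norm{V_k e_j}_{\CC^d} = \norm{v^{(j)}_k}_{\CC^d}$ (valid for every $j$) gives $\norm{V_k}^2 \ge \max_j \norm{v^{(j)}_k}_{\CC^d}^2$. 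Summing over $k$ with the fractional correction from \eqref{eq:105a} and using the elementary inequality $\sum_i a_i^2 \le d \max_i a_i^2$ yields, for $t$ large enough that the denominators are positive,
\[
\frac{\norm{U}_{[0,t]}^2}{\norm{V}_{[0,t]}^2} \; \le \; d \cdot \frac{\max_i \norm{u^{(i)}}_{[0,t]}^2}{\max_j \norm{v^{(j)}}_{[0,t]}^2}.
\]

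By Corollary~\ref{szac-iloraznort-norn} I may replace the $\liminf$ over real $t$ by the $\liminf$ over integers. For each $n$ let $i^{*}(n)$, $j^{*}(n)$ be indices realizing $\max_i \norm{u^{(i)}}_{[0,n]}$ and $\max_j \norm{v^{(j)}}_{[0,n]}$ respectively; these are $\{1, \dots, d\}$-valued, so a pigeonhole on the pair $(i^{*}(n), j^{*}(n))$ produces an infinite subsequence $N \subset \NN$ and fixed indices $(i_0, j_0)$ --- necessarily with $u^{(i_0)} \neq 0$ and $v^{(j_0)} \neq 0$ --- along which the above estimate collapses to
\[
\frac{\norm{U}_{[0,n]}}{\norm{V}_{[0,n]}} \; \le \; \sqrt{d}\, \frac{\norm{u^{(i_0)}}_{[0,n]}}{\norm{v^{(j_0)}}_{[0,n]}}, \qquad n \in N.
\]
The plan then reduces to producing an infinite sub-subsequence of $N$ on which this right-hand ratio stays bounded. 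This is the main obstacle: vector nonsubordinacy applied to the pair $(u^{(i_0)}, v^{(j_0)})$ supplies a finite $\liminf$ over all $t$, but not \emph{a priori} along $N$. I expect to handle it either by refining the pigeonhole so as to simultaneously capture a liminf-realizing subsequence, or by replacing the column $u^{(i_0)}$ with a linear combination $U \xi^{*}_t$ whose seminorm approximates $\norm{U}_{[0,t]}$ (extracting a convergent direction in the compact unit sphere of $\CC^d$) and then applying vector nonsubordinacy to the pair $(U\xi^{*}, v^{(j_0)})$.
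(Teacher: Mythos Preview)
Your forward direction is correct and is exactly the paper's argument.

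For the converse, the paper's setup is simpler than yours: rather than maximising over the columns of $V$ and then pigeonholing on the pair $(i^{*}(n),j^{*}(n))$, the paper just picks one fixed unit vector $w$ with $Vw\neq 0$, uses $\norm{V}_{[0,t]}\ge\norm{Vw}_{[0,t]}$, and bounds
\[
\frac{\norm{U}_{[0,t]}^2}{\norm{V}_{[0,t]}^2}\;\le\;\sum_{i=1}^{d}\frac{\norm{Ue_i}_{[0,t]}^2}{\norm{Vw}_{[0,t]}^2}.
\]
It then applies vector nonsubordinacy to each pair $(Ue_i,Vw)$ and declares the conclusion. This eliminates your pigeonhole on $j$ and the associated bookkeeping entirely.

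However, the obstacle you isolate is genuine, and the paper's argument does not actually get around it: from ``each summand has finite $\liminf$'' one cannot in general infer that the sum has finite $\liminf$, even for nondecreasing nonnegative functions with a common denominator. (For a two-term model: take $g(n)=2^{2^{n}}$, let $f_1(n)=g(n)$ at even $n$ and $f_1(n)=g(n{+}1)/2$ at odd $n$, and let $f_2$ be the parity-swapped version; then $\liminf f_i/g=1$ for $i=1,2$ but $(f_1+f_2)/g\to\infty$.) So the paper's final sentence ``which together with \eqref{eq:137} implies \eqref{eq:122}'' is exactly the step you flag, and following the paper verbatim would leave the same gap open. Your two suggested repairs --- refining the pigeonhole, or passing to a limiting direction $\xi^{*}$ on the unit sphere of $\CC^d$ --- are the right places to look, but neither is carried out here, and neither is supplied by the paper.
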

\begin{proof}
$(\Rightarrow)$ Take any non-zero $u,v \in \GEV{ \la}$ and view them as a sequence of column vectors. Let us define 
\[
    U_n := E^{u_n}, \quad 
    V_n := E^{v_n}, \quad n \geq -1,
\]
cf. \eqref{def:Ev}, i.e. the first column of $U_n$ is equal to the column vector $u_n$ and the rest is zero, analogously for $V_n$. By Fact~\ref{factsolve-ma-ve} both $U,V \in \MGEV{ \la}$. By Proposition~\ref{prop:18} for any $n \geq -1$ we have $\norm{U_n} = \norm{u_n}$ and $\norm{V_n} = \norm{v_n}$. Consequently, $\norm{U}_{[0,t]} = \norm{u}_{[0,t]}$ and $\norm{V}_{[0,t]} = \norm{v}_{[0,t]}$ for any $t \geq 0$. Thus, the condition \eqref{eq:122} implies \eqref{eq:121}.

$(\Leftarrow)$ Let us observe that for any $X \in \MGEV{\la}$ and any $w \in \CC^d$ such that $\norm{w} = 1$ we have
\begin{equation} \label{eq:136}
    \norm{X w}_{[0,t]}^2 
    \leq
    \norm{X}_{[0,t]}^2
    \leq 
    \sum_{i=1}^d \norm{X e_i}_{[0,t]}^2, \quad t > 0.
\end{equation}
Let $U,V \in \MGEV{\la}$ be non-zero. Then there exists $w \in \CC^d$ such that $\norm{w}=1$ and $V w$ is non-zero. Then by \eqref{eq:136} we have
\begin{equation} \label{eq:137}
    \frac{\norm{U}_{[0,t]}^2}{\norm{V}_{[0,t]}^2} 
    \leq
    \sum_{i=1}^d \frac{\norm{U e_i}_{[0,t]}^2}{\norm{V w}_{[0,t]}^2}, \quad t > 0.
\end{equation}
Now, by Fact~\ref{factsolve-ma-ve}, $U e_i \in \GEV{\la}$ for $i=1,\ldots,d$ and $V w \in \GEV{\la}$. Thus, by \eqref{eq:121} we get
\[
    \liminf_{t \to +\infty}
    \frac{\norm{U e_i}_{[0,t]}^2}{\norm{V w}_{[0,t]}^2} < +\infty, \quad i=1,\ldots,d,
\]
which together with \eqref{eq:137} implies \eqref{eq:122}.
\end{proof}

\subsection{Some ``fast'' spectral  consequence of  nonsubordinacy} \label{sec: spectr-nonsub}

In fact, this ``negative form'' of subordination type  notions is not only ``natural'', as it was mentioned before, but it is also related to some spectral results for $J$ in the block case which are  a bit like the absolute continuity. 

 Recall that $\GEVld{\la}$ was defined in \eqref{GEVl2}.

\begin{theorem} \label{prop:16}
Suppose that  $J$ is self-adjoint and it satisfies  nonsubordinacy for some $\la \in \RR$. Then \ $\dim (\GEVld{\la}) = 0$ \ and \ $\la \in \sigma(J) \setminus \sigmaP(J)$.
\end{theorem}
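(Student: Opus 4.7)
The plan is to argue by contradiction. Suppose there exists a non-zero $u \in \GEVld{\la}$, and let $\tilde u := \extl{u} \in \GEV{\la}$; this $\tilde u$ is non-zero because $\tilde u_n = u_n$ for $n \geq 0$, and since $u \in \ldnz{\CC^d}$ one reads directly from \eqref{eq:105a} that $\norm{\tilde u}_{[0,t]} \leq \norm{u}_{\el^2}$ for every $t \geq 0$. By Proposition~\ref{prop:17}, $J$ satisfies vector nonsubordinacy at $\la$, so for an arbitrary non-zero $v \in \GEV{\la}$, applying Definition~\ref{def:vnonsub} to the pair $(v, \tilde u)$ gives
\[
    \liminf_{t \to \infty} \frac{\norm{v}_{[0,t]}}{\norm{\tilde u}_{[0,t]}} < +\infty.
\]
Combined with the uniform bound on $\norm{\tilde u}_{[0,t]}$ and with the (clear from \eqref{eq:105a}) weak monotonicity of the function $t \mapsto \norm{v}_{[0,t]}^2$, this forces $\norm{v}_{[0,t]}$ to be uniformly bounded in $t$, hence $v \in \ldnmj{\CC^d}$.

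Applying this conclusion with $v$ ranging over the $d$ column sequences of each of $P(\la)$ and $Q(\la)$ --- every such column is an egev by Fact~\ref{factsolve-ma-ve} --- would yield $P(\la), Q(\la) \in \ldnmj{\Mdc}$. This contradicts the self-adjointness of $J$ by \cite[Theorem~1.3]{Dyukarev2020}, the very fact invoked in the proof of Proposition~\ref{defJL}. Therefore $\dim(\GEVld{\la}) = 0$, which in particular forces $\la \notin \sigmaP(J)$ since any eigenvector of $J$ at $\la$ must lie in $\GEVld{\la}$. Finally, $\la \in \sigma(J)$ follows by one more contradiction: were $\la$ outside the spectrum, Theorem~\ref{duzo-malo-wl2} would give $\dim(\GEVld{\la}) = d \geq 1$.

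I expect the main conceptual input to be the invocation of the limit-point-type result from \cite{Dyukarev2020}, which rules out the ``limit-circle'' scenario in which every element of $\GEV{\la}$ is in $\ldnmj{\CC^d}$ --- a situation not precluded a priori by nonsubordinacy alone (for instance, the Liouville--Ostrogradsky identity \eqref{eq:LO2} at $w = \la$ remains consistent with both $P(\la)$ and $Q(\la)$ being in $\el^2$ provided $\norm{A_{k-1}^{-1}}$ decays sufficiently fast). The remaining pieces --- the extension $\extl{}$, the monotonicity of the seminorms, and the spectral dichotomy of Theorem~\ref{duzo-malo-wl2} --- are essentially routine.
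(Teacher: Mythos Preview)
Your proof is correct and follows essentially the same route as the paper's: assume a non-zero $\ell^2$ generalized eigenvector exists, use (vector) nonsubordinacy to force \emph{every} non-zero $v \in \GEV{\la}$ into $\ell^2$, and then invoke \cite[Theorem~1.3]{Dyukarev2020} to contradict self-adjointness; the conclusion $\la \in \sigma(J)$ via Theorem~\ref{duzo-malo-wl2} is identical. The paper's version is slightly terser---it applies Fact~\ref{factsym} directly and passes to the limit in one line rather than spelling out the monotonicity, and it does not single out the columns of $P(\la), Q(\la)$---but the logical skeleton is the same.
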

\begin{proof}
 Let $u,v \in \GEV{ \la}$ 
be non-zero. By Fact~\ref{factsym}
there exists a constant $c>0$ and a sequence $(t_k)_{k \in \NN}$ tending to $+\infty$ such that
\[
    \norm{u}_{[0,t_k]} \geq c \norm{v}_{[0,t_k]}.
\]
Consequently,  taking the limit we get
\begin{equation} \label{eq:96}
    \frac1c\norm{u}_{[0,+\infty]} \geq  \norm{v}_{[0,+\infty]}.
\end{equation}
Thus, if there existed a non-trivial $u \in \ldnz{\CC^d}$, then all $v$ would be also in $\ldnz{\CC^d}$. But then  the operator $J$ would not be self-adjoint, by \cite[Theorem 1.3]{Dyukarev2020}. 
Thus each non-zero $u\in \GEV{ \la}$ is not square-summable. Therefore, by \eqref{EVeq} we get $\lambda \notin \sigmaP(A)$. Finally,  Proposition~\ref{duzo-malo-wl2} yields the  last assertion.
\end{proof}

\subsection{On further nonsubordination ideas}

However, this concept of nonsubordinacy turns out to be rather insufficient for the purposes of true absolute continuity. In our  parallel paper \cite{BarrierNonsubordinacy} we develop this idea  deeper and we consider more sophisticated  notion  called {\em barrier nonsubordinacy}. This new condition  can already guarantee the spectral absolute continuity property for the block case of  $J$. And the proof of it is based on  arguments somewhat analogical to those used in the proofs of subordination theory for  $d=1$, including the generalization for the block case of  Weyl theory  presented in the previous sections.

Let us only mention here, that  
the barrier nonsubordinacy 
 controls two things:
  a bound on  \ $\frac{\norm{U}_{[0,t]}}{\norm{V}_{[0,t]}}$ \  for any fixed ``large'' $t$ and  $\la\in G$, but joint for all such $U,V \in \MGEV{ \la}$ which are ``normalized'' in a  certain  sense, and also 
   ''the size'' of the above bound as a function of $t$.

\appendix  
\section{Vector  and  matrix measures --- selected basic notions} \label{vecmacmeas}

For  self-consistency and some self-sufficiency of the paper  we  collect here  selected  definitions of   some basic notions and some properties  related to matrix measures and --- more generally ---  vector measures.
We omit here the more sophisticated construction of the appropriate $L^2$-type space for the matrix measure, referring the reader to the literature  (see, e.g., \cite[Section 8]{Weidmann1987} or \cite{Moszynski2022}\footnote{The second position contains the detailed definition of the Hilbert space  $L^2(M)$ for matrix measures and the details of the abstract   spectral theory for finitely cyclic s.a. operators, based on the matrix measure approach and multiplication by a function operators in $L^2(M)$ spaces.})

 We start from the general definition of vector measure.
 
 Consider a set $\Omega$ with  $\frakM$ --- a $\si$-algebra of subsets of $\Omega$ and 
  a certain norm space  $X$. Let $V:\frakM\strz X$ 
 
\begin{definition} \label{vectormeasure}
\emph{$V$ is a vector measure (in $X$)} iff
$V$ is countably additive in the norm sense in $X$.
\end{definition}

Now, let us consider a  special case  $X:=\Mdc$ for some $d\in\NN$ (with a standard norm, say). 
 And   let  \ \ 
$M : \frakM \strz M_d(\CC)$.

\begin{definition} \label{matrmeas}
\emph{$M$ is a ($d \times d$) matrix measure} iff
\begin{enumerate}[label={(\alph*)}]
    \item \label{matrmeas:1}
    $M$ is a vector measure;
    
    \item \label{matrmeas:2}
    $M(\omega) \geq 0$\  for any $\omega \in \frakM$.\footnote{So, in particular $M(\omega)$ is s.a..}
\end{enumerate}
\end{definition}
So, in particular, each matrix measure is a  vector measure, 
but despite its name and  due to the extra  non-negativity property~\ref{matrmeas:2}, matrix measure  is ``much more'' than a vector measure  in $M_d(\CC)$.

The above  $\Omega$,   $\frakM$ and $d$ are ``fixed'' below.

We have  to precise here  some terminology   (choosing it from various versions  in  literature)   and to recall several basic facts related to vector measures, matrix measures  and measures. Recall that some ''measure'' terminology was already fixed  in  Introduction --- see footnote \ref{foot-meas} page \pageref{foot-meas}.

 Suppose that  $\nu$ is a measure on $\frakM$ and  $V:\frakM\strz X$ is a vector measure, where  $X$ is a norm space. 
 
 In several cases below we will also need to  assume additionally that $X=\CC^k$ for some $k$, including  possible  obvious identifications, as,  e.g., $\Mdc\equiv\CC^{(d^2)}$, to provide the  clear and standard sense of the integral and of  $\calL^1_{X}(\nu)$ functions (see the appropriate part of Section~\ref{L1X}). 
 
For any $G \in \frakM$ denote $\frakM_{G} := \{ \omega \in \frakM : \om \subset G \}$.
Surely $\frakM_{G}$ is a $\sigma$-algebra of subsets of $G$ and  
$\restr{V}{\frakM_{G}}$ is a vector measure on $\frakM_{G}$ ("on $G$"). We denote it by $V_{G}$, i.e.
\[
	V_{G} := \restr{V}{\frakM_{G}},
\]
and we call it \emph{the restriction of $V$ to $G$}. 
Analogous situation (and notation, and  terminology) is well known and  valid  here for  measures.

Suppose that $H:\Omega\strz X=\CC^k$ is a measurable function w.r.t. $\frakM$. If, moreover, 
$H\in\calL^1_{X}(\nu)$, then we define a new function from $\frakM$ into $X$ by the formula:
\beql{densdmeas}
    \int_{\om} H \ud \nu, \qquad \om \in \frakM.
\eeq
It is obviously a vector measure, and we denote it by 
\[
    H \ud \nu,
\]
analogously as in the case of  measures (when $H$ should be a scalar non-negative measurable  function, instead of $H\in\calL^1_X(\nu)$). 
But in our main case  $X=\Mdc$  \ (identified with $\CC^{(d^2)}$) 
the situation is somewhat similar and one easily checks the following  result.
\begin{fact} \label{posden=mm}
 If \ 
$H \in \calL^1_X(\nu)$ \ and \  $H(t) \geq 0$ \ for $\nu$-a.e. $t\in \Omega$, then  the vector measure 
$
	H \ud \nu
$
is a matrix measure.   
\end{fact}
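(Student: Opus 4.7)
The plan is to verify the two defining conditions of a matrix measure for $M := H \, d\nu$. Condition \ref{matrmeas:1} is essentially already recorded in the text immediately preceding the statement: the map $\omega \mapsto \int_\omega H \, d\nu$, being defined componentwise via $d^2$ scalar integrals of $\calL^1(\nu)$-functions, is countably additive in the norm of $\CC^{(d^2)} \equiv \Mdc$ by the dominated convergence theorem applied to each component (with dominating function $\|H(\cdot)\|$). So the real content is condition \ref{matrmeas:2}: showing $M(\omega) \geq 0$ for every $\omega \in \frakM$.

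For this, I would fix an arbitrary $\omega \in \frakM$ and an arbitrary $v \in \CC^d$, and unpack the sesquilinear form $\sprod{M(\omega) v}{v}_{\CC^d}$ by componentwise expansion. Writing $M(\omega)_{i,j} = \int_\omega H_{i,j} \, d\nu$, linearity of the (scalar) integral gives
\[
\sprod{M(\omega) v}{v}_{\CC^d} = \sum_{i,j=1}^d \overline{v_i}\, v_j \int_\omega H_{i,j}(t) \, d\nu(t) = \int_\omega \sprod{H(t) v}{v}_{\CC^d} \, d\nu(t).
\]
By the $\nu$-a.e. hypothesis $H(t) \geq 0$, the integrand $\sprod{H(t) v}{v}_{\CC^d}$ is non-negative for $\nu$-a.e. $t \in \Omega$, so the integral is non-negative. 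Since $v \in \CC^d$ was arbitrary, $M(\omega) \geq 0$, and since $M(\omega)$ is automatically self-adjoint (its Hermitian part is again expressible as an integral of the Hermitian parts of $H$, which coincide $\nu$-a.e.\ with $H$ itself), we get $M(\omega) \geq 0$ in the full operator-theoretic sense.

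There is no serious obstacle here: the only delicate point is the interchange of a finite sum with a scalar integral, which is automatic, plus the observation that pointwise non-negativity of $H$ is preserved under integration against a (non-negative) measure. The statement is truly routine; I would present it essentially in the two displayed lines above, after noting that countable additivity of $M$ has already been remarked.
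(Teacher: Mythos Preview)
Your proof is correct, and it is precisely the routine verification the paper has in mind: the paper does not give a proof of this fact at all, merely remarking that ``one easily checks'' it after noting that $H\,d\nu$ is obviously a vector measure. Your quadratic-form computation for the non-negativity is the standard argument; the parenthetical about self-adjointness is slightly redundant, since $\langle M(\omega)v,v\rangle\geq 0$ for all $v\in\CC^d$ already forces $M(\omega)$ to be self-adjoint.
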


\begin{definition} \label{defilong}
If a  vector measure $V$ is such, that  
$V = H \ud \nu$ with  some $H \in \calL^1_{X}(\nu)$, then we call $H$  {\em the density\footnote{However, it can be  not unique, as a function from  $\calL^1_{X}(\nu)$.} of \ $V$ with respect to $\nu$}. 

Let $G  \in \frakM$.

\bde
\item[the density  on $G$ w.r.t.] 
{\em The density of $V$ on $G$ w.r.t. $\nu$} means:  any   density of $V_G$ w.r.t. $\nu_G$;

\item[a support]
\emph{$G$ is a support of \ $V$} \ iff \  $V_{\Omega\setminus G}$ is the  zero vector measure  (on $\frakM_{\Omega\setminus G}$)\footnote{\label{fosuveme} Generally, it is not sufficient here (contrary to measures) that $V(\Omega\setminus G)=0$ because the property of having zero measure is not inheritable  into  measurable subsets. However, for matrix measures it {\bf is} sufficient by non-negativity.};

\item[a minimal  support  w.r.t.]
\emph{$G$ is a minimal  support of \ $V$ w.r.t. $\nu$} \ iff \  
$G$ is a   support of \ $V$
and for any support $G'$  of \ $V$ included in $G$
\[
	\nu(G\setminus G')=0;
\]

\item[a.c. w.r.t.]
\emph{$ V$ is absolutely continuous (abbrev.: a.c.) w.r.t. $\nu$} \  iff \ for any $\om \in \frakM$ \ if $\nu(\om) = 0$, then  $V(\om) = 0\,$;

\item[sing. w.r.t.]
\emph{$V$ is singular (abbrev.: sing.) w.r.t. $\nu$} iff  there exists such a support $S\in \frakM$ of $V$ that $\nu(S) = 0$;

\item[the a.c./sing. part  w.r.t.]
If $V_1, V_2:\frakM\strz X$ are two vector measures, such that
\begin{enumerate}[label=(\roman*)]
    \item $V_1$ is a.c. w.r.t. $\nu$ and $V_1$ is sing. w.r.t. $\nu$,
    \item $V = V_1 + V_2$,
\end{enumerate}
then we call $V_1$ \  {\em the a.c. part of $V$ w.r.t. $\nu$} and we denote it by 
\[
    V_{\mathrm{ac}, \nu},
\]
and we call $V_2$ \  {\em the sing. part of $V$ w.r.t. $\nu$}, \  and we denote it by 
\[
    V_{\mathrm{sing},\nu}.
\]
Note here, that above two notions are well(uniquely)-defined, since the above decomposition, if exists, is unique\footnote{Because any  linear combination of vector measures being a.c. (sing.) w.r.t. $\nu$ is also a.c. (sing.) w.r.t. $\nu$; and a vector measure which is both a.c. and sing. w.r.t. $\nu$ is the zero measure.};

\item[a.c. on $G$ w.r.t.]
\emph{$ V$ is a.c. on $G$ w.r.t. $\nu$} \  iff \ 
{$ V_G$ is a.c. w.r.t. $\nu_G$}; 

\item[sing. on $G$ w.r.t.]
\emph{$ V$ is sing. on $G$ w.r.t. $\nu$} \  iff \ 
{$ V_G$ is sing. w.r.t. $\nu_G$.} 

\ede

We adopt  all the above definitions and names also for any  measure $\mu$ instead of a vector measure $V$ (recall that  measure may  be  not a vector measure) just by interchanging the symbols $\mu$ and $V$, including the notation
\[
    \mu_{\mathrm{ac},\nu}, \qquad \mu_{\mathrm{sing},\nu},
\]
however they are most commonly known in the case of measures.  
 
We adopt here also  the convention, that the part ``w.r.t. $\nu$'', as well as ``$,\nu$'' in the appropriate symbols, as e.g.,   $V_{\mathrm{ac},\nu}$, $V_{\mathrm{sing},\nu}$, can be omitted   to   shorten the notation  to,  e.g.,
\[
    V_{\mathrm{ac}}, \ V_{\mathrm{sing}}, \ 
    \mu_{\mathrm{ac}}, \ \mu_{\mathrm{sing}}
\]
etc, in the case when 
$\Omega=\RR$, \ $\frakM=\Bor(\RR)$ and $\nu$ is the Lebesgue measure $|\cdot|$.
\end{definition}

For a $d \times d$ matrix measure $M$ and $i,j=1,\ldots, d$ let us define $M_{ij}: \frakM \to \CC$ by
\begin{equation}
	\label{eq:III:1.1}
	M_{ij}(\omega) := \big( M(\omega) \big)_{ij}\, , \quad 
	\omega \in \frakM.
\end{equation}
By the point~\ref{matrmeas:1} of Definition~\ref{matrmeas}, each of the  $M_{ij}$ is a complex measure on $\frakM $. Moreover, 
non-negativity from \ref{matrmeas:2} of a matrix means   also its self-adjointness, so  we have
\beql{Msym}
    M_{j,i}=\sprz{M_{i,j}} \quad           i,j=1,\ldots,d.
\eeq

By non-negativity, defining  
 $
 \tr_M: \frakM \to \CC
 $
by the formula 
\beql{trM}
\tr_M(\om):=\tr(M(\om)), \quad \om \in\frakM,
\eeq
we get  in fact a finite  measure $\tr_M$, called  {\em    trace measure of $M$.}   
This ``classical'' measure is much a  simpler mathematical object than the matrix measure $M$, but it contains a lot of important information on $M$. 

The results below are proved in \cite[Section III.1]{Moszynski2022}.

\begin{fact} \label{prop:III:3}
For   $M$ being  a matrix measure as above:
\begin{enumerate}[label=(\roman*)]
    \item \label{prop:III:3:1}
    $M$, as well as  each $M_{ij}$ for $i,j=1,\ldots,d$, are absolutely continuous with respect to $\tr_M$;
    
    \item \label{prop:III:3:2}
    \beql{Mifftr}
        M(\om)=0 \iff \tr_M(\om)=0, \quad\qquad \om \in \frakM;
    \eeq
    
    \item \label{prop:III:3:3}
    \beql{Mleqtr}
        \ 0 \leq M(\omega) \leq \tr_M(\omega) \Id \quad\qquad \om \in \frakM;
    \eeq
   
    \item \label{prop:III:3:4} 
    There exists a density $D: \Omega \strz \Mdc$
    of $M$ w.r.t. $\tr_M$, such that 
    for any $t \in \Omega$ \ 
    \[
        0 \leq D(t) \leq \Id\ ,\footnote{In particular, $D(t)$  is self-adjoint.}  \qquad  t \in \Omega.
    \]
\end{enumerate}
\end{fact}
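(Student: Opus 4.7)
My plan is to prove the four parts in the order (iii) $\Rightarrow$ (ii) $\Rightarrow$ (i) $\Rightarrow$ (iv), since each statement naturally feeds the next.

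For part (iii), I would fix $\om \in \frakM$ and combine two elementary facts about a positive self-adjoint matrix $A \geq 0$: namely $A \leq \|A\|\, \Id$ (since its largest eigenvalue equals $\|A\|$) and $\|A\| \leq \tr A$ (from Proposition~\ref{prop:18}\ref{prop:18:4}). Applied to $A = M(\om) \geq 0$, this gives $0 \leq M(\om) \leq \|M(\om)\|\, \Id \leq \tr_M(\om)\, \Id$, which is \eqref{Mleqtr}. Part (ii) then becomes almost free: the direction $M(\om)=0 \Rightarrow \tr_M(\om)=0$ is immediate from $\tr_M(\om) := \tr M(\om)$, and the converse is the sandwich $0 \leq M(\om) \leq \tr_M(\om)\Id = 0$.

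For part (i), absolute continuity of $M$ w.r.t.\ $\tr_M$ in the sense of Definition~\ref{defilong} is the implication $\tr_M(\om)=0 \Rightarrow M(\om)=0$, which is exactly \eqref{Mifftr}. Absolute continuity of each complex measure $M_{ij}$ follows from the same implication together with the trivial estimate $|M_{ij}(\om)| = |\langle M(\om)e_j,e_i\rangle_{\CC^d}| \leq \|M(\om)\| \leq \tr_M(\om)$ coming from (iii).

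For part (iv), the main step is to apply the classical (scalar) Radon--Nikodym theorem: $\tr_M$ is a finite measure, each $M_{ij}$ is a finite complex measure a.c.\ w.r.t.\ $\tr_M$ by~(i), so there exist measurable densities $D_{ij} \in \calL^1(\tr_M)$ with $M_{ij}(\om) = \int_\om D_{ij}\, d\tr_M$; the matrix-valued map $D := (D_{ij})_{i,j=1,\ldots,d}$ is then a density of $M$ w.r.t.\ $\tr_M$ in the sense of \eqref{densdmeas}. The delicate step, and the main obstacle, is to arrange the pointwise bound $0 \leq D(t) \leq \Id$. For each fixed $v \in \CC^d$ and each $\om \in \frakM$, part (iii) yields
\[
    0 \leq \int_\om \langle D(t)v,v\rangle_{\CC^d}\, d\tr_M(t) = \langle M(\om)v,v\rangle_{\CC^d} \leq \tr_M(\om)\, \|v\|^2,
\]
so $0 \leq \langle D(t)v,v\rangle_{\CC^d} \leq \|v\|^2$ holds off a $\tr_M$-null set $N_v$. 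The obstacle is that null sets depend on $v$, but one can bypass this by a standard separability argument: choosing a countable dense set $S \subset \CC^d$ (e.g.\ the $(\QQ+i\QQ)$-linear combinations of $e_1,\ldots,e_d$), the exceptional set $N := \bigcup_{v \in S} N_v$ still has $\tr_M(N)=0$, and on $\Omega\setminus N$ the inequality holds for every $v \in S$ and hence, by continuity of $v \mapsto \langle D(t)v,v\rangle$, for every $v \in \CC^d$. Using symmetry \eqref{Msym} one can symmetrize $D$ so that $D(t) = D(t)^*$ off a null set, and then the final cosmetic redefinition $D(t) := 0$ on the exceptional null set produces a measurable density satisfying $0 \leq D(t) \leq \Id$ for every $t \in \Omega$.
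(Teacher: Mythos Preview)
Your proof is correct. Note that the paper does not actually prove this fact; it only cites \cite[Section III.1]{Moszynski2022}, so there is no in-paper argument to compare against. Your route (iii) $\Rightarrow$ (ii) $\Rightarrow$ (i) $\Rightarrow$ (iv) is the natural one and matches the standard approach in the literature.

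One small remark on part~(iv): the separate symmetrization step is in fact redundant. Once your separability argument yields $\langle D(t)v,v\rangle_{\CC^d}\in[0,\|v\|^2]$ for \emph{all} $v\in\CC^d$ (off a single $\tr_M$-null set), the fact that $\langle D(t)v,v\rangle_{\CC^d}$ is real for every $v$ already forces $D(t)=D(t)^*$ by polarization, and then $0\leq D(t)\leq\Id$ follows immediately. So you may drop the appeal to \eqref{Msym} and the explicit symmetrization, though keeping it does no harm.
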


 Note that each density $D$ of $M$  w.r.t. $\tr_M$ is determined only up to   $\tr_M$-a.e. equality, and each one is  called {\em the trace density of $M$}.  The set of all the densities of $M$ w.r.t. $\tr_M$  is denoted here by $\DD_M$, and by $\DD^{\bullet}_M$ we denote the set of those $D$, which satisfy conditions from  the  point~\ref{prop:III:3:4} above.

By the definition of density we have 
\beql{eq:III:1.5}
	 M(\omega) = 
	\int_\omega D \ud \tr_M, \qquad \omega \in \frakM, \ D\in \DD_M.
\eeq

We assume here, {\bf to the end if this section}, that 
  $M : \frakM \strz M_d(\CC)$ is a matrix measure and $\nu :\frakM \strz [0,+\infty]$ is a $\si$-finite measure.

There exist several ways  to get a  decomposition of {\bf some} vector measures $V$ into its a.c. and sing. parts w.r.t. a measure $\nu$.
All they are based somehow  on the Lebesgue--Radon--Nikodym Theorem (see, e.g. \cite{Rudin-RCA}) for a complex measure ``w.r.t. a $\si$-finite measure'' version. We need it only for our  matrix measure $M$
and it will be convenient to make it {\em via} the appropriate decomposition of $\tr_M$ onto parts $(\tr_M)_{\mathrm{ac},\nu}$ and $(\tr_M)_{\mathrm{sing},\nu}$,  which exist by the Lebesgue--Radon--Nikodym Theorem.
So, we just consider any $D\in\DD_M$ and  two matrix measures:
\[
    D \ud(\tr_M)_{\mathrm{ac},\nu}, \qquad   D \ud(\tr_M)_{\mathrm{sing},\nu}
\]
(see notation \eqref{densdmeas})

\begin{fact}\label{acscM}
 The a.c. and the sing.  parts of $M$ w.r.t. $\nu$ exist, and they satisfy
\beql{acsingparts}
    M_{\mathrm{ac},\nu} = D \ud(\tr_M)_{\mathrm{ac},\nu}\,, \qquad 
    M_{\mathrm{sing},\nu} = D \ud(\tr_M)_{\mathrm{sing},\nu}\,,
\eeq
where $D$ is an arbitrary density from $\DD_M$.
Moreover, both $M_{\mathrm{ac},\nu}, \  M_{\mathrm{sing},\nu}$ are matrix measures\footnote{By the definition, they are ``only'' vector measures in $\Mdc$.}, and  
\beql{trac=actr}
    \tr_{M_{\mathrm{ac},\nu}} = (\tr_M)_{\mathrm{ac},\nu}, \qquad 
    \tr_{M_{\mathrm{sing},\nu}} = (\tr_M)_{\mathrm{sing},\nu}.
\eeq
In particular, if  $S\in\frakM$, then   TFCAE:
\begin{itemize}
    \item $S$ is a support (version 2.: minimal support w.r.t. $\nu$) of \ $M_{\mathrm{ac},\nu}$\,;
    \item $S$ is a support (version 2.: minimal support w.r.t. $\nu$) of \ $\tr_{M_{\mathrm{ac},\nu}}$\,;
    \item $S$ is a support (version 2.: minimal support w.r.t. $\nu$) of \ $(\tr_{M})_{\mathrm{ac},\nu}$\,;
\end{itemize}
and analogically for 
$M_{\mathrm{sing},\nu}$, \ $\tr_{M_{\mathrm{sing},\nu}}$, \ $(\tr_{M})_{\mathrm{sing},\nu}$.
\end{fact}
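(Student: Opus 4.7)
My plan is to take any density $D \in \DD_M$ (which exists by Fact~\ref{prop:III:3}), decompose the \emph{scalar finite} measure $\tr_M$ using the classical Lebesgue--Radon--Nikodym theorem as $\tr_M = \mu_a + \mu_s$, where $\mu_a := (\tr_M)_{\mathrm{ac},\nu}$ and $\mu_s := (\tr_M)_{\mathrm{sing},\nu}$, and then define two candidate vector measures
\[
    M_a := D \ud \mu_a, \qquad M_s := D \ud \mu_s,
\]
both well-defined since $D \in \calL^1_{\Mdc}(\tr_M)$ and $\mu_a, \mu_s \leq \tr_M$. I would then verify that $M_a$ and $M_s$ are exactly the a.c.\ and sing.\ parts of $M$ w.r.t.\ $\nu$ and compute their traces.

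The verification splits into four routine steps. First, by choosing $D \in \DD^{\bullet}_M$ (so $D(t) \geq 0$ pointwise), Fact~\ref{posden=mm} applied to the finite measures $\mu_a$ and $\mu_s$ shows that $M_a$ and $M_s$ are matrix measures. Second, by additivity of the integral in the measure argument and \eqref{eq:III:1.5},
\[
    M_a(\omega) + M_s(\omega) = \int_\omega D \ud(\mu_a + \mu_s) = \int_\omega D \ud \tr_M = M(\omega),
\]
so $M = M_a + M_s$. Third, $M_a$ is a.c.\ w.r.t.\ $\nu$: if $\nu(\omega)=0$ then $\mu_a(\omega)=0$, hence $M_a(\omega) = \int_\omega D \ud\mu_a = 0$. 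Fourth, $M_s$ is sing.\ w.r.t.\ $\nu$: picking a support $S$ of $\mu_s$ with $\nu(S)=0$, for every $\omega \subset \Omega\setminus S$ we have $\mu_s(\omega)=0$, hence $M_s(\omega)=0$, so $S$ is a support of $M_s$ in the vector-measure sense (cf.~footnote~\ref{fosuveme}). Uniqueness of the decomposition, already noted in Definition~\ref{defilong}, then gives $M_a = M_{\mathrm{ac},\nu}$ and $M_s = M_{\mathrm{sing},\nu}$, which proves \eqref{acsingparts}. Independence of this conclusion from the particular choice of $D \in \DD_M$ follows since any two such densities agree $\tr_M$-a.e., hence $\mu_a$-a.e.\ and $\mu_s$-a.e.

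For the trace identities \eqref{trac=actr}, the key observation is that the function $\tr \circ D$ is a density of the scalar measure $\tr_M$ w.r.t.\ itself, because
\[
    \tr_M(\omega) = \tr\Big( \int_\omega D \ud \tr_M \Big) = \int_\omega \tr(D) \ud \tr_M,
\]
and uniqueness of scalar densities gives $\tr(D) = 1$ \ $\tr_M$-a.e. Consequently,
\[
    \tr_{M_a}(\omega) = \tr\Big( \int_\omega D \ud \mu_a \Big) = \int_\omega \tr(D)\ud\mu_a = \mu_a(\omega),
\]
and analogously for $M_s$. The step where I would be most careful is this interchange of trace and integral together with the a.e.\ identification $\tr(D)=1$; this is the main technical point but is straightforward thanks to the finiteness of $\tr_M$.

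Finally, the equivalence of the three support (resp.\ minimal-support) conditions is essentially automatic: the third is literally equal to the second by \eqref{trac=actr}, and the equivalence of ``$S$ is a (minimal) support of a matrix measure $N$'' with ``$S$ is a (minimal) support of $\tr_N$'' follows by applying Fact~\ref{prop:III:3}\ref{prop:III:3:2} to the restriction $N_{\Omega\setminus S}$: one has $N_{\Omega\setminus S} = 0$ iff $\tr_{N_{\Omega\setminus S}} = (\tr_N)_{\Omega\setminus S} = 0$, and for minimality the same equivalence applied to arbitrary smaller supports $S' \subset S$ reduces ``$\nu(S\setminus S') = 0$'' for both viewpoints to the same scalar condition. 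The analogous argument for the singular parts is identical.
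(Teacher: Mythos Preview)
Your proof is correct and follows essentially the same route as the paper: define the two candidate measures $D\ud(\tr_M)_{\mathrm{ac},\nu}$ and $D\ud(\tr_M)_{\mathrm{sing},\nu}$, verify they satisfy the defining properties of the a.c.\ and sing.\ parts, and invoke uniqueness.

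The one genuine difference is in how you establish \eqref{trac=actr}. You compute $\tr(D)=1$ $\tr_M$-a.e.\ and then integrate directly. The paper instead argues more abstractly: from $M=M_{\mathrm{ac},\nu}+M_{\mathrm{sing},\nu}$ one gets $\tr_M=\tr_{M_{\mathrm{ac},\nu}}+\tr_{M_{\mathrm{sing},\nu}}$; then, using \eqref{Mifftr} for each of the two matrix measures, one sees that $\tr_{M_{\mathrm{ac},\nu}}$ is a.c.\ and $\tr_{M_{\mathrm{sing},\nu}}$ is sing.\ w.r.t.\ $\nu$, so by uniqueness of the \emph{scalar} Lebesgue decomposition these must equal $(\tr_M)_{\mathrm{ac},\nu}$ and $(\tr_M)_{\mathrm{sing},\nu}$. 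The paper's argument avoids the pointwise identification $\tr(D)=1$ and the interchange of trace with integral; yours is a bit more hands-on but equally valid, and your observation that $\tr(D)=1$ a.e.\ is a nice byproduct.
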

\begin{proof}
Using ``the  short theory'' presented above, one immediately checks  that 
the pair of vector  measures \ 
$Dd(\tr_M)_{\mathrm{ac},\nu}$,
$Dd(\tr_M)_{\mathrm{sing},\nu}$ \ 
satisfies the conditions from  Definition~\ref{defilong} of the 
parts $M_{\mathrm{ac},\nu}$
$M_{\mathrm{sing},\nu}$. So, by the uniqueness of the decomposition, we get
\eqref{acsingparts}. Now, by 
  the non-negativity of $D$ and  by Fact~\ref{posden=mm} we see that $M_{\mathrm{ac},\nu}, \  M_{\mathrm{sing},\nu}$ are matrix measures. 

  To get the assertion \eqref{trac=actr}, observe that the equality \  $M=M_{\mathrm{ac},\nu}+  M_{\mathrm{sing},\nu}$ \ \  yields  \ 
  $\tr_M=\tr_{M_{\mathrm{ac},\nu}}+  \tr_{M_{\mathrm{sing},\nu}}$ \ by the definition of the trace measure.
  But $M_{\mathrm{ac},\nu}, \   M_{\mathrm{sing},\nu}$ are a.c. or, respectively, sing. w.r.t. $\nu$, hence also $\tr_{M_{\mathrm{ac},\nu}}, \   \tr_{M_{\mathrm{sing},\nu}}$ are a.c. or, respectively, sing. w.r.t. $\nu$, just by the use of   the property \eqref{Mifftr} for both  those   matrix measures. So, we get the result just by the definitions of a.c. and sing. parts.
  And the last part follows directly from  \eqref{trac=actr}, \eqref{Mifftr} and by the observation that both  notions: of support, as  well as of minimal support w.r.t. a measure, are determined by zero vector measure  sets, only.
\end{proof}

Now we turn  to a ``technical'' result concerning the notion of the minimal support.

\begin{lemma}\label{techLem}
Consider a matrix measure $M : \frakM \strz M_d(\CC)$, a measure $\nu : \frakM \strz [0,+\infty]$, \ sets   $S_a, S_s\in\frakM$ \ and a { non-negative} function $F:S_a\strz M_d(\CC)$.
If
\begin{enumerate}[label=(\roman*)]
    \item \label{techLem:1}
    \ \ $S_a$ \ is a support of \ $M_{\mathrm{ac},\nu}$ \ and \  $S_s$ \ is a support of \ ${M_{\mathrm{sing},\nu,}}$
    
    \item \label{techLem:2}
    \ \ $S_a\cap S_s=\emptyset$,
    
    \item \label{techLem:3}
    $F$ is a density of \ $M_{\mathrm{ac},\nu}$ \  on \ $S_a$ \ w.r.t. \  $\nu$,
    
    \item \label{techLem:4}
    \ \ $\nu\left(\{t\in S_a: F(t)=0\}\right)=0$,
\end{enumerate}
then $S_s$ is a support of \ $(tr_M)_{\mathrm{sing},\nu}$ \ and \
$S_a$ is a minimal support of \ $(tr_M)_{\mathrm{ac},\nu}$ \ w.r.t. \ $\nu$.
\end{lemma}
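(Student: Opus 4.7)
The plan is to leverage Fact~\ref{acscM} to reduce all statements about $(\tr_M)_{\mathrm{ac},\nu}$ and $(\tr_M)_{\mathrm{sing},\nu}$ to the corresponding statements about $M_{\mathrm{ac},\nu}$ and $M_{\mathrm{sing},\nu}$, and then to exploit the density $F$ together with matrix non-negativity to force $\nu$-nullity of any ``excess'' part of $S_a$.

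The support half of the conclusion should be essentially immediate from the last part of Fact~\ref{acscM}: hypothesis~\ref{techLem:1} gives that $S_s$ is a support of $M_{\mathrm{sing},\nu}$, which the cited equivalence upgrades to $S_s$ being a support of $(\tr_M)_{\mathrm{sing},\nu}$; analogously $S_a$ becomes a support of $(\tr_M)_{\mathrm{ac},\nu}$. So only the minimality of $S_a$ as a support of $(\tr_M)_{\mathrm{ac},\nu}$ requires actual work. To handle it, I would take an arbitrary $T\in\frakM$ with $T\subset S_a$ which is a support of $(\tr_M)_{\mathrm{ac},\nu}$; by Fact~\ref{acscM} once more $T$ is also a support of $M_{\mathrm{ac},\nu}$. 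Setting $N:=S_a\setminus T$ one has $N\cap T=\emptyset$, hence $(M_{\mathrm{ac},\nu})_N$ is the zero matrix measure, in particular $M_{\mathrm{ac},\nu}(N)=0$ as a matrix.

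Now hypothesis~\ref{techLem:3} says that $F$ is a density of $M_{\mathrm{ac},\nu}$ on $S_a$ w.r.t.\ $\nu$, so since $N\subset S_a$ I get
\[
0 \;=\; M_{\mathrm{ac},\nu}(N) \;=\; \int_N F\,\ud\nu
\]
as an identity in $\Mdc$. Taking the trace and using that $F(t)\ge 0$ pointwise --- so that $\tr(F(t))\ge 0$ with equality exactly when $F(t)=0$ (since for positive semidefinite $A$ one has $A=B^*B$ and $\tr(B^*B)=0$ forces $B=0$) --- one obtains $F=0$ holding $\nu$-a.e.\ on $N$, which combined with hypothesis~\ref{techLem:4} gives $\nu(N)=0$, i.e.\ $\nu(S_a\setminus T)=0$. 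The main subtlety is precisely this final ``trace $=0$ implies matrix $=0$'' step for non-negative matrices; everything else is bookkeeping with Fact~\ref{acscM} and with the definitions of support and density.
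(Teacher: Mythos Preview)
Your proposal is correct and follows essentially the same route as the paper: both invoke Fact~\ref{acscM} to transfer the support statements to $(\tr_M)_{\mathrm{ac},\nu}$ and $(\tr_M)_{\mathrm{sing},\nu}$, then for minimality take an arbitrary smaller support $T\subset S_a$, use the density $F$ to compute $M_{\mathrm{ac},\nu}(S_a\setminus T)=\int_{S_a\setminus T}F\,\ud\nu=0$, pass to traces to conclude $F=0$ $\nu$-a.e.\ on $S_a\setminus T$, and finish with hypothesis~\ref{techLem:4}. The only cosmetic difference is that the paper cites Proposition~\ref{prop:18}\ref{prop:18:4} for the ``$\tr F(t)=0\Rightarrow F(t)=0$'' step, while you argue it directly; also note that, as in the paper's proof, hypothesis~\ref{techLem:2} is never actually used.
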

\begin{proof}
From Fact~\ref{acscM}
we immediately see that 
 $S_s$ is a support of \ $(\tr_M)_{\mathrm{sing},\nu}$ \ and \
$S_a$ is  support of \ $(\tr_M)_{\mathrm{ac},\nu}$.
To prove the minimality, consider any $S'\subset S_a$ which is also a support of $(\tr_M)_{\mathrm{ac},\nu}$.
Hence, again by Fact~\ref{acscM}
\beql{nr1}
    0 = 
    (\tr_M)_{\mathrm{ac},\nu}(S_a\setminus S') = 
    \tr_{M_{\mathrm{ac},\nu}}(S_a\setminus S').
\eeq
On the other hand, by the assumption~\ref{techLem:3}, using 
$(S_a \setminus S')\subset S_a$
and the non-negativity of  $\tr F(t)$ for any $t\in S_a$, we have
\begin{align*}
    \tr_{M_{\mathrm{ac},\nu}}(S_a\setminus S') = 
    \tr \big( M_{\mathrm{ac},\nu} (S_a \setminus S') \big) = 
    \tr \left( \int_{(S_a\setminus S')} F \ud \nu \right) = 
    \int_{(S_a\setminus S')} \tr F(t) \, \ud \nu(t).
\end{align*}
Thus, by \eqref{nr1} we have $\tr F(t)=0$ \  for $\nu$-a.e. $t\in(S_a\setminus S')$. Moreover, by
Proposition~\ref{prop:18}\ref{prop:18:4}, $\tr F(t)=0 \iff F(t)=0$. So,  by the  assumption~\ref{techLem:4},   we get $\tr F(t) \neq 0$ \ also  for $\nu$-a.e. $t\in(S_a\setminus S')$. Thus 
$\nu(S_a \setminus S') = 0$.    
\end{proof}

At the end of this section let us recall the definition of the integral of the scalar function w.r.t. a vector measure for some simplest case,  but sufficient  for our goals.  Consider a vector measure $V
:\frakM\strz X$, where $X=\CC^k$ (e.g. ---  a matrix measure, with $k=d^2$) and $f:\Omega\strz\CC$ --- a bounded $\frakM$-measurable function. Then for any $s=1,\ldots, k$ the  function $V_s:\frakM\strz \CC$ given by
\[
    V_s(\om):= \big( V(\omega) \big)_s, \qquad \omega\in\frakM,
\]
is a complex measure and therefore  the integral 
\[
    \int_{\Omega} f \ud V_s
\]
is well-defined (see, e.g., \cite[Section I.1]{Diestel1977}). 
Thus,  we define simply:
\beql{intscalvecmea}
    \int_{\Omega} f \ud V:= 
    \left(\int_{\Omega} f \ud V_1, \ldots, \int_{\Omega} f \ud V_k \right)\in \CC^k.
\eeq

\begin{bibliography}{block,jacobi}
	\bibliographystyle{amsplain}
\end{bibliography}
\end{document}